\DeclareMathOperator{\codim}{codim}
\DeclareMathOperator{\coker}{coker}
\DeclareMathOperator{\Sing}{Sing}
\newtheorem{theorem}{Theorem}[section]
\newtheorem{proposition}[theorem]{Proposition}
\newtheorem{lemma}[theorem]{Lemma}
\newtheorem{corollary}[theorem]{Corollary}
\theoremstyle{definition}
\newtheorem{definition}[theorem]{Definition}
\newtheorem{example}[theorem]{Example}
\theoremstyle{remark}
\newtheorem{remark}[theorem]{Remark}
\newtheorem{notation}[theorem]{Notation}
\newtheorem*{warning}{Warning}
\numberwithin{equation}{section}
\title{An effective decomposition theorem for Schubert varieties}
\author[F. Cioffi, D. Franco and C. Sessa]{Francesca Cioffi \and Davide Franco \and Carmine Sessa}
\date{21 March 2022}
\thanks{\textit{Address}: Dipartimento di Matematica e Applicazioni "R. Caccioppoli", Università degli Studi di Napoli Federico II, Via Cintia, 80126 Napoli, Italy}
\thanks{\textit{emails}: cioffifr@unina.it \quad davide.franco@unina.it \quad carmine.sessa2@unina.it}
\thanks{All authors are members of the GNSAGA (INdAM, Italy).}
\begin{document}

\begin{abstract}
	Given a Schubert variety $\mathcal{S}$ contained in a Grassmannian $\mathbb{G}_{k}(\mathbb{C}^{l})$, we show how to obtain further information on the direct summands of the derived pushforward $R \pi_{*} \mathbb{Q}_{\tilde{\mathcal{S}}}$ given by the application of the decomposition theorem to a suitable resolution of singularities $\pi: \tilde{\mathcal{S}} \rightarrow \mathcal{S}$. As a by-product, Poincaré polynomial expressions are obtained along with an algorithm which computes the unknown terms in such expressions and which shows that the actual number of direct summands happens to be less than the number of supports of the decomposition.
	\\[2ex]
	\textbf{Mathematics Subject Classification (2020).} Primary 14B05, 14M15; Secondary 14E15, 14F45, 32S60, 58K15, 68W30.
	\\
	\textbf{Keywords.} Leray-Hirsch theorem, Derived category, Intersection cohomology, Decomposition theorem, Schubert varieties, Resolution of singularities.
\end{abstract}

\maketitle

\section{Introduction}

Let $\mathcal{S} \subset \mathbb G_{k}(\mathbb{C}^{l})$ be a Schubert variety of a Grassmannian. In general, $\mathcal{S}$ is highly singular but it is well known that it admits a \emph{small resolution} $\sigma: \Sigma \rightarrow \mathcal{S}$ (see \cite{Zel1983}, \cite[\textsection 9.1]{BiLa2000}). Such a resolution is particularly useful since it determines the intersection cohomology complex of $\mathcal{S}$
\begin{equation*}
	R \sigma_{*} \mathbb{Q}_{\Sigma} [ \dim \mathcal{S} ] \cong IC_{\mathcal{S}}^{\bullet},
\end{equation*}
which is a constructible complex of $\mathbb{Q}$-vector sheaves on $\mathcal{S}$. In particular, for any $x \in \mathcal{S}$, there is a well defined polynomial ($\mathcal{H}^{\alpha}$ denotes the $\alpha$-th cohomology sheaf)
\begin{equation}\label{KLintro}
	b_{x} := \sum_{\alpha \in \mathbb{N}} \dim_{\mathbb{Q}} \mathcal{H}^{\alpha}(IC_{\mathcal{S}}^{\bullet} [- \dim \mathcal{S}])_{x} t^{\alpha},
\end{equation}
which is an example of Kazhdan-Lusztig polynomial. Indeed, the main advantage of the small resolution $\sigma: \Sigma \rightarrow \mathcal{S}$ lies in the fact that it enables the explicit determination of Kazhdan-Lusztig polynomials for the Schubert varieties of a Grassmannian, usually very hard to compute. On the other hand, $\sigma$ has several drawbacks; in fact, the construction of small resolutions given in \cite{Zel1983} is inductive, which makes the resolutions themselves not that explicit in the sense that their fibres, which are usually highly singular and reducible, are not easy to describe in general.

In this paper, our point of view is to consider another canonical resolution $\pi: \tilde{\mathcal{S}} \rightarrow \mathcal{S}$, usually not small, but presenting several nice features: it is completely explicit, its fibres are smooth and immediate to determine, their Poincar\'e polynomials are easy to compute and one can have thorough control of the strata where the fibres change. However, it has to be acknowledged that the computation of the Kazhdan-Lusztig polynomials requires an additional inductive formula, which is not needed while working with the small resolution $\sigma$.

Since $\pi$ is not usually small, the interplay between $R \pi_{*} \mathbb{Q}_{\tilde{\mathcal{S}}} [ \dim \mathcal{S} ]$ and the intersection cohomology of $\mathcal{S}$ is much more involved and governed by the \emph{decomposition theorem} (see Theorem~\ref{ThDec}, \cite{BeBeDe1982} and \cite[1.6.1]{dCaMi2009}). Indeed, one of the main consequences of such result is that the intersection cohomology complex of $\mathcal{S}$ is a direct summand of the complex $R \pi_{*} \mathbb{Q}_{\tilde{\mathcal{S}}} [ \dim \mathcal{S} ]$ in the derived category $D_{c}^{b}(\mathcal{S})$ of $\mathbb{Q}$-vector sheaves on $\mathcal{S}$. Specifically, by \cite[\textsection~1.5]{dCaMi2009}, the decomposition theorem applied to $\pi$ provides a non-canonical decomposition
\begin{equation}\label{decThmintr}
	R \pi_{*}\mathbb{Q}_{\tilde{\mathcal{S}}} [ \dim \mathcal{S} ] \cong \bigoplus_{\alpha \in \mathbb{Z}} \prescript{\mathfrak{p}}{}{\mathcal{H}}^{\alpha}(R \pi_{*}\mathbb{Q}_{\tilde{\mathcal{S}}} [ \dim \mathcal{S} ] ) \cong \bigoplus_{i \in \mathbb{Z}} \bigoplus_{j \in \mathbb{N}} IC^{\bullet}(L_{ij}) [-i],
\end{equation}
where $\prescript{\mathfrak{p}}{}{\mathcal{H}}^{\alpha}(R \pi_{*} \mathbb{Q}_{\tilde{\mathcal{S}}} [ \dim \mathcal{S} ] )$ denotes the \textit{perverse cohomology sheaves} \cite[\S 1.5]{dCaMi2009}. These sheaves are semisimple, i.e.~direct sums of intersection cohomology complexes of the semisimple local systems $L_{ij}$, each of which is supported on a suitable locally closed stratum of codimension $j$, usually called a \emph{support} of the decomposition. The summand supported in the general point is precisely the intersection cohomology of $\mathcal{S}$. The supports appearing in the splitting (\ref{decThmintr}) and the local systems $L_{ij}$ are, generally, rather mysterious objects when $j \geq 1$.

In literature one can find different approaches to the decomposition theorem (see \cite{BeBeDe1982, dCaMi2005, dCaMi2009, Sai1986, Wil2017}), which is a very general result but also rather implicit. On the other hand,  there are many special cases for which the decomposition theorem admits a simplified and explicit approach. One of these is the case of varieties with isolated singularities (see \cite{Nav1985,dGeFr2017OnTheTopology, dGeFr2020}). For instance, in the work \cite{dGeFr2017OnTheTopology}, a simplified approach to the decomposition theorem for varieties with isolated singularities is developed, in connection with the existence of a \textit{natural Gysin morphism}, as defined in \cite[Definition~2.3]{dGeFr2017OnTheExistence} (see also \cite{dGeFr2014} for other applications of the decomposition theorem to the Noether-Lefschetz Theory).

\textit{The main aim of this paper is to determine the summands  involved in (\ref{decThmintr}) and to provide an explicit description of the splitting}. 

A priori, all Schubert varieties contained in $\mathcal{S}$ might be expected to appear as supports in the splitting (\ref{decThmintr}); yet, we shall see that only some suitable Schubert subvarieties, that we call $\mathcal{S}$-varieties (see Definition~\ref{def:S-variety}), are allowed to appear.

The starting point of our analysis stems from the remark that \textit{the semisimple local systems involved in the decomposition are constant sheaves supported in the smooth part of the admissible strata} (cfr. Theorem~\ref{ThMain}). In other words, the decomposition (\ref{decThmintr}) takes the form
\begin{equation}\label{decimpr}
	R \pi_{*}\mathbb{Q}_{\tilde{\mathcal{S}}}  \cong \bigoplus_{h, k} IC_{\Delta_{h}}^{\bullet}[-k]^{\oplus n_{hk}},
\end{equation}
where $\Delta_{h}$ ranges among the $\mathcal{S}$-varieties and $n_{hk} \in \mathbb{N}_{0}$ denotes suitable multiplicities (see Remark~\ref{RemMultiplicities}). 

Some natural questions arise: how to compute the multiplicities $n_{hk}$? In particular, which strata actually appear in  (\ref{decimpr})? Is it possible to use (\ref{decimpr}) to compute Kazhdan-Lusztig polynomials?

Single condition Schubert varieties have been studied in \cite{dGeFr2019} and \cite{CiFrSe2021}. In this paper, we provide an algorithm to answer these questions for Schubert varieties with an arbitrary number of conditions. Specifically, we will prove that the Poincar\'e polynomials of the fibres of $\pi$ determine inductively both the generating functions of the multiplicities $n_{hk}$ and the Kazhdan-Lusztig polynomials and we deduce an algorithm, which we are going to name \textit{KaLu}, computing both of them.

Last but not the least, the just mentioned algorithm makes us observe the following fact. By analogy with \cite[Definition 4.2.3]{dCaMi2009}, we call an $\mathcal{S}$-variety $\Delta_{h} \subset \mathcal{S}$ \textit{$\pi$-relevant} if and only if
\begin{equation*}
	\dim S - \dim \Delta_{h} \leq 2 \dim \pi^{-1}(p),
\end{equation*}
where $p$ is a general point of $\Delta_{h}$. It is natural to ask whether \textit{all $\pi$-relevant varieties actually are supports in \eqref{decimpr}}. Thanks to some tests performed by means of our algorithm, we obtained a negative answer (see Section~\ref{SubsecRelevant}).

Let us conclude by summarizing the organization of the paper. In Section~\ref{SecSchVar} we settle notations once and for all and recall definitions and facts which are needed throughout the paper. In Section~\ref{SecMainTh} we prove our main result from which certain families of polynomial expressions are inferred. Section~\ref{SecKaLu} is devoted to the description of the algorithm which computes the unknown polynomials involved in the expressions mentioned above. Moreover, we also explain how we checked the correctness of the algorithm and how it finds out if a $\pi$-relevant variety actually gives a contribution in \eqref{decimpr}. Some ancillary files related to these tests and experiments are available at \url{http://wpage.unina.it/carmine.sessa2/KaLu}, along with an implementation of the algorithm \textit{KaLu} in CoCoA5 \cite{CoCoA5}. The last section is an appendix which consists of several examples concerning (some of) the properties of Schubert varieties that can be deduced by means of their Ferrer's diagrams (defined in Section~\ref{SubsecFerrersdiagrams}).

\section{Schubert varieties}\label{SecSchVar}

We shall begin by well known facts concerning Grassmannians and Schubert varieties. The reason why we decided to enlarge on them rests upon the convenience of setting notations once and for all and that we would like our paper to be comprehensible also to readers less familiar with the treated subjects.

Sections~\ref{SubsecDefinition} and \ref{SubsecFerrersdiagrams} are devoted to the description of Schubert varieties and their representation by means of Ferrer's diagrams. In Section~\ref{SubsecSubvarieties} we explain which are the subvarieties of a given Schubert variety $\mathcal{S}$ that are needed later on. We recommend looking at the examples available in Section~\ref{SecExamples}, which show how most properties of Schubert varieties are conveyed by their Ferrer's diagrams.

\subsection{Definition and remarks}\label{SubsecDefinition}

Throughout the paper, we shall work with $\mathbb{Q}$-coefficients cohomology and the middle (or self-dual) perversity $\mathfrak{p}$ (see \cite[\textsection 2.1]{BeBeDe1982}, \cite[\textsection 1.3]{GoMa1980} and \cite[p. 79]{GoMa1983}). To start with, let $k$ be a positive integer and let $H$ be a complex vector space. We shall denote by $\mathbb{G}_{k}(H) := \{ V \subseteq H: \dim V = k \}$ the \textit{Grassmannian of $k$-dimensional subspaces of $H$.}

Secondly, let $\psi: E \rightarrow B$ be a complex vector bundle, which can be also denoted by $E$ whenever $\psi$ and $B$ are clear in the context. We shall denote by $\mathcal{G}_{h}(E) \rightarrow B$ the \textit{Grassmannian $h$-plane bundle of $E$}, whose fibre at any $b \in B$ is $\mathcal{G}_{h}(E)_{b} = \mathbb{G}_{h}(\psi^{-1}(b))$.

Lastly, let $n$ and $l$ be positive integers. A \textit{partial flag (of length $n$)} in $\mathbb{C}^{l}$ is a finite sequence of vector subspaces $\mathcal{H}: H_{1} \subset \ldots \subset H_{n}$ with $H_{1} \neq 0$ and $H_{n} \subset \mathbb{C}^{l}$. Another flag $\mathcal{H}': H_{1}' \subset \ldots \subset H_{n'}'$ is said to be a \textit{subflag} of $\mathcal{H}$, and we shall write $\mathcal{H}' \subseteq \mathcal{H}$, if and only if for each $\alpha \in \{ 1, \ldots, n' \}$ there is $\beta \in \{ 1, \ldots, n \}$ such that $H_{\alpha}' = H_{\beta}$.

\begin{definition}
	Given a partial flag $\mathcal{F}: F_{j_{1}} \subset \dots \subset F_{j_{\omega}}$ of $\mathbb{C}^{l}$, where $\dim F_{j_{\alpha}} = j_{\alpha}$ for every $\alpha \in \{ 1, \dots, \omega \}$, and an $\omega$-tuple of non-negative integers $\mathcal{I} = (i_{1}, \dots, i_{\omega})$, the \textbf{Schubert variety associated to $\mathcal{F}$ and $\mathcal{I}$} is the subvariety of $\mathbb{G}_{k}(\mathbb{C}^{l})$ given by
	\begin{equation*}
		\mathcal{S} := \{ V \in \mathbb G_k(\mathbb{C}^{l}) \hspace{0.165em} : \hspace{0.165em} \dim( V \cap F_{j_{\alpha}}) \geq i_{\alpha}, \hspace{0.33em} \alpha \in \{ 1, \dots, \omega \} \}.
	\end{equation*}
\end{definition}

Let $\mathcal{S}$ be the Schubert variety associated to $(\mathcal{F}, \mathcal{I})$. From the definition, it immediately follows that $j_{1} < \ldots < j_{\omega} < l$. Notice that $\mathcal{S}$ is empty if and only if there is an index $\alpha$ such that $i_{\alpha} > \min \{ k, j_{\alpha} \}$ and that it is contained in a Grassmannian smaller than $\mathbb{G}_{k}(\mathbb{C}^{l})$ if and only if there is $\alpha$ such that $i_{\alpha} = \min \{ k, j_{\alpha} \}$. Hence, $\mathcal{S}$ is neither empty nor contained in a Grassmannian smaller than $\mathbb{G}_{k}(\mathbb{C}^{l})$ if we assume
\begin{equation*}
	0 < i_{\alpha} < k \quad \mbox{and} \quad i_{\alpha} < j_{\alpha} \hspace{0.33em} \forall \alpha \in \{ 1, \dots, \omega \}. 
\end{equation*} 
The correspondence between Schubert varieties and the pairs $(\mathcal{F}, \mathcal{I})$ is not bijective; in fact, some incidence conditions $\dim(V \cap F_{j_{\alpha}}) \geq i_{\alpha}$ might be superfluous (e.g., some of them may happen to be implied by the others). This situation does not occur if and only if, for every $\alpha$,
\begin{itemize}
	\item[(i)] $i_{\alpha} < i_{\alpha + 1}$; indeed, if $i_{\alpha} \geq i_{\alpha + 1}$ then the condition $\dim (V \cap F_{j_{\alpha + 1}}) \geq i_{\alpha + 1}$ is implied by the fact that $\dim (V \cap F_{j_{\alpha}})\geq i_{\alpha}$, being $F_{j_{\alpha}} \subset F_{j_{\alpha + 1}}$;
	\item[(ii)] $i_{\alpha + 1} - i_{\alpha} < j_{\alpha + 1} - j_{\alpha}$; indeed, if $i_{\alpha + 1} - i_{\alpha} \geq  j_{\alpha + 1} - j_{\alpha}$ then the condition $\dim (V \cap F_{j_{\alpha}}) \geq i_{\alpha}$ is implied by the fact that $\dim (V \cap F_{j_{\alpha + 1}}) \geq i_{\alpha + 1}$ because $\dim (V \cap F_{j_{\alpha + 1}})- \dim (V \cap F_{j_{\alpha}}) \leq j_{\alpha + 1} - j_{\alpha}$ by construction;
	\item[(iii)] $k + j_{\alpha} < l + i_{\alpha}$; indeed, if it is $k \geq l - j_{\alpha} + i_{\alpha}$ then the condition $\dim (V \cap F_{j_{\alpha}}) \geq i_{\alpha}$ is superfluous because $\dim (V \cap F_{j_{\alpha}}) = k + j_{\alpha} -\dim( V + F_{j_\alpha}) \geq l + i_{\alpha} - \dim(V + F_{j_\alpha}) \geq i_{\alpha}$.
\end{itemize}
Note that under the assumption (ii), condition (iii) is obtained by requiring $j_{\omega} - i_{\omega} < l - k$ only.

To sum up, although a Schubert variety $\mathcal{S}$ is given by a flag $\mathcal{F}$ and a vector $\mathcal{I} = (i_{1}, \ldots, i_{\omega})$, it is possible to get rid of the redundant conditions without changing the variety $\mathcal{S}$. In other words, we are allowed to remove some integers $i_{\alpha}$ from $\mathcal{I}$ and, consequently, the corresponding vector spaces $F_{j_{\alpha}}$ from $\mathcal{F}$ until we obtain the minimum information required to define $\mathcal{S}$. \textit{Let us see how to achieve that}.

Given $\mathcal{S}$ by means of a flag $\mathcal{F}: F_{j_{1}} \subset \ldots \subset F_{j_{\omega}}$ and of an $\omega$-tuple $\mathcal{I} = (i_{1}, \ldots, i_{\omega})$, set $\omega_{0} := \omega$, $\mathcal{I}_{0} = (i_{1}^{0}, \ldots, i_{\omega_{0}}^{0}) := \mathcal{I}$, $\mathcal{J}_{0} = (j_{1}^{0}, \ldots, j_{\omega_{0}}^{0}) := (j_{1}, \ldots, j_{\omega})$ and $\mathcal{F}_{0} := \mathcal{F}$.
\begin{description}[leftmargin = 5mm, before={\renewcommand\makelabel[1]{\bfseries ##1}}]
	\item[\textnormal{a)}]	For $\alpha = 1, \ldots, \omega_{0} - 1$,  if there is $\beta > \alpha$ such that $i_{\alpha}^{0} \geq i_{\beta}^{0}$, then delete the $\beta$-th condition; i.e.~set $\mathcal{I}_{0} := (i_{1}^{0}, \ldots, \hat{i}_{\beta}^{0}, \ldots, i_{\omega_{0}}^{0})$, $\mathcal{J}_{0} := (j_{1}^{0}, \ldots, \hat{j}_{\beta}^{0}, \ldots, j_{\omega_{0}}^{0})$, $\mathcal{F}_{0}: F_{j_{1}^{0}} \subset \ldots \subset \hat{F}_{j_{\beta}^{0}} \subset \ldots \subset F_{j_{\omega_{0}}^{0}}$, where the symbol $\hat{ }$ is used to indicate the term to delete, and $\omega_{0} := \omega_{0} - 1$;
	\item[\textnormal{b)}] for $\alpha = \omega_{0}, \ldots, 2$, if there is $\beta < \alpha$ such that $i_{\alpha}^{0} \geq j_{\alpha}^{0} - j_{\beta}^{0} + i_{\beta}^{0}$, then delete the $\beta$-th condition as explained in item a);
	\item[\textnormal{c)}] if $j_{\omega_{0}}^{0} - i_{\omega_{0}}^{0} \geq l - k$, then delete the last conditions as explained in item a) and go on until there is $\alpha < \omega$ such that $j_{\alpha} - i_{\alpha} < l - k$.
\end{description}
In case we do not want $\mathcal{S}$ to be contained in a Grassmannian smaller than $\mathbb{G}_{k}(\mathbb{C}^{l})$, we perform the following further control.
\begin{description}[leftmargin = 5mm, before={\renewcommand\makelabel[1]{\bfseries ##1}}]
	\item[\textnormal{d)}] If $i_{\omega_{0}}^{0} = k$, delete the last condition as shown in item a).
\end{description}

\begin{definition}
	Let $\mathcal{S}$ be the Schubert variety given by $(\mathcal{F}, \mathcal{I})$. The flag $\mathcal{F}$, the $\omega$-tuple $\mathcal{I}$ and the pair $(\mathcal{F}, \mathcal{I})$ are said to be \textbf{essential} if and only if they are the minimum information needed to define $\mathcal{S}$. Equivalently,
	\begin{equation*}
		0 < i_{1} < \ldots < i_{\omega} \leq k < l + i_{\omega} - j_{\omega}, \qquad i_{\alpha} < j_{\alpha} \hspace{0.33em} \forall \alpha
	\end{equation*}
and
	\begin{equation*}
		i_{\alpha + 1} - i_{\alpha} < j_{\alpha + 1} - j_{\alpha} \hspace{0.33em} \forall \alpha < \omega.
	\end{equation*}
	In particular, $\mathcal{S}$ is called either a \textbf{special} or a \textbf{single condition} Schubert variety if $\omega = 1$ and $i_{\omega} < k$.
\end{definition}

Notice that the property of being essential implies $\omega \leq k$, otherwise $i_{\omega} > k$, against the above conditions. Moreover, \textit{the correspondence between Schubert varieties and essential pairs $(\mathcal{F}, \mathcal{I})$ is bijective}. 

Instead, if $\mathcal{S}$ is a Schubert variety described by a flag $\mathcal{F}$ which is not essential and we do not want to omit the redundant conditions, there are different vectors $\mathcal{I}$ which describe $\mathcal{S}$ with respect to $\mathcal{F}$. In fact, if a condition $\dim (V \cap F_{j_{\alpha}}) \geq i_{\alpha}$ is superfluous, i.e.~either $i_{\alpha - 1} \geq i_{\alpha}$ or $i_{\alpha + 1} - i_{\alpha} \geq j_{\alpha + 1} - j_{\alpha}$ or $i_{\alpha} \leq k - l + j_{\alpha}$, then $\mathcal{S}$ does not change if we require either $\dim (V \cap F_{j_{\alpha}}) \geq i_{\alpha - 1} - \beta$ or $\dim (V \cap F_{j_{\alpha}}) \geq i_{\alpha + 1} - j_{\alpha + 1} + j_{\alpha} - \beta$ or $i_{\alpha} \leq k - l + j_{\alpha} - \beta$, respectively, whatever $\beta \geq 0$ is.

Hence, given a flag $\mathcal{F}$, there is a bijection between Schubert varieties and pairs $(\mathcal{F}, \mathcal{I})$ such that
\begin{equation}\label{eq:weak conditions}
	\begin{split}
		&0 \leq i_{1} \leq \ldots \leq i_{\omega} \leq k \leq l + i_{\omega} - j_{\omega}, \qquad i_{\alpha} \leq j_{\alpha} \hspace{0.33em} \forall \alpha\\
		&\mbox{and} \quad i_{\alpha + 1} - i_{\alpha} \leq j_{\alpha + 1} - j_{\alpha} \hspace{0.33em} \forall \alpha < \omega.
	\end{split}
\end{equation}

\subsection{Ferrer's diagrams}\label{SubsecFerrersdiagrams}

\begin{definition}
	Let $\lambda = (\lambda_{1}, \ldots, \lambda_{k})$ be a decreasing sequence of $k$ non-negative integers. The \textbf{Ferrer's diagram} of $\lambda$ is the diagram obtained by piling up $k$ rows of length $\lambda_{1}, \ldots, \lambda_{k}$, from top to bottom, so that their left edges are aligned.
\end{definition}

\begin{example}
	The Ferrer's diagram of $\lambda = (6, 6, 5, 4, 3)$ is
	\begin{equation*}
		\begin{tikzpicture}[scale = 0.66]
			\draw (0, 0) node[anchor = south east] {\small{0}};
			\draw (0, -1) node[anchor = east] {\small{1}};
			\draw (0, -2) node[anchor = east] {\small{2}};
			\draw (0, -3) node[anchor = east] {\small{3}};
			\draw (0, -4) node[anchor = east] {\small{4}};
			\draw (0, -5) node[anchor = east] {\small{5}};
			\draw (3, 0) node[anchor = south] {\small{3}};
			\draw (4, 0) node[anchor = south] {\small{4}};
			\draw (5, 0) node[anchor = south] {\small{5}};
			\draw (6, 0) node[anchor = south] {\small{6}};
			\draw (0, -1) rectangle (6, 0);
			\draw (0, -2) rectangle (6, -1);
			\draw (0, -3) rectangle (5, -2);
			\draw (0, -4) rectangle (4, -3);
			\draw (0, -5) rectangle (3, -4);
		\end{tikzpicture}
	\end{equation*}
\end{example}

Given a nonempty Schubert variety $\mathcal{S}$, it is possible to associate to it the sequence of integers $\lambda^{\mathcal{S}} = (\lambda_{\alpha}^{\mathcal{S}})_{\alpha = 1, \ldots, k}$ defined as follows:
\begin{equation*}
	\lambda_{\alpha}^{\mathcal{S}} =
	\begin{cases*}
		l - k - j_{1} + i_{1} &\mbox{if $\alpha \in \{ 1, \ldots, i_{1} \}$}\\
		l - k - j_{2} + i_{2} &\mbox{if $\alpha \in \{ i_{1} + 1, \ldots, i_{2} \}$}\\
		\dots\\
		l - k - j_{\omega} + i_{\omega} &\mbox{if $\alpha \in \{ i_{\omega - 1} + 1, \ldots, i_{\omega} \}$}\\
		0 &\mbox{if $\alpha \in \{ i_{\omega} + 1, \ldots, k \}$}.
	\end{cases*}
\end{equation*}
It is worth pointing out that \textit{$\lambda^{\mathcal{S}}$ is independent of the choice of the flag $\mathcal{F}$ and the $\omega$-tuple $\mathcal{I}$.} Moreover, from the definition of Schubert varieties, it follows that the sequence \textit{$\lambda^{\mathcal{S}}$ is decreasing, with each entry non-negative and strictly lower than $l - k$}. Therefore, we can consider the Ferrer's diagram of $\lambda^{\mathcal{S}}$, which shall be called the \textit{Ferrer's diagram of $\mathcal{S}$}. When $(\mathcal F,\mathcal I)$ is essential, $\lambda^{\mathcal{S}}$ contains exactly $\omega$ different integers with their repetitions, if any.

Several properties of Schubert varieties are conveyed by their Ferrer's diagrams, as we will see throughout the paper. At the moment, let us just observe that \textit{the (complex) codimension with respect to $\mathbb{G}_{k}(\mathbb{C}^{l})$ of the Schubert variety associated to the sequence $\lambda^{\mathcal{S}} = (\lambda_{\alpha}^{\mathcal{S}})_{\alpha = 1, \ldots, k}$ equals the area of its Ferrer's diagram} \cite[pp. 194-196]{GrHa1994}.

\begin{warning}
	In the rest of the paper, we are going to assume that $k \leq j_{1}$. Let us explain the reason why such an assumption is sensible. First of all, notice that, whenever $i_{1}, \ldots, i_{\omega}$ and $k$ have been chosen, there are only a finite number of cases in which $k > j_{1}$. Secondly, a Schubert variety $\mathcal{S}$ in a Grassmannian $\mathbb{G}_{k}(\mathbb{C}^{l})$ can be thought of as the intersection of $\mathbb{G}_{k}(\mathbb{C}^{l})$ with the Schubert variety $\mathcal{S}'$ in the Grassmannian $\mathbb{G}_{k}(\mathbb{C}^{l + 1})$ which is represented by the same Ferrer's diagram of $\mathcal{S}$ (this means that $j_{\alpha}' = j_{\alpha} + 1$ for each $\alpha \in \{ 1, \ldots, \omega \}$). Therefore, the properties of $\mathcal{S}$ can be deduced from the ones of $\mathcal{S}'$. Lastly, another difference between $\mathcal{S}$ and $\mathcal{S}'$ is that the number of supports involved in the Decomposition Theorem is maximum when $k = j_{1}$, it does not change if $k < j_{1}$ and it lowers as $k > j_{1}$ increases (see Example~\ref{Exakj}).
\end{warning}

\subsection{Families of subvarieties}\label{SubsecSubvarieties}

\textit{Throughout the paper, we let $\mathcal{S}$ be a non-empty Schubert variety associated to the essential flag $\mathcal{F}: F_{j_{1}} \subset \ldots \subset F_{j_{\omega}}$ and $\omega$-tuple $\mathcal{I} = (i_{1}, \ldots, i_{\omega})$ with $i_{\omega} < k$}.

\begin{definition}\label{def:S-variety}
	An $\mathcal{S}$-\textbf{variety} is a non-empty Schubert subvariety of $\mathcal S$ associated to a subflag $\mathcal{F}_{p} : F_{j_{1}^{p}} \subset \ldots \subset F_{j_{\omega_{p}}^{p}}$ of the essential flag $\mathcal{F}$. 
	
	Equivalently, letting $\mathcal{I}_{p} := (i_1^p,\dots,i_{\omega_p}^p) + p$, where $i_\alpha^p$ is the component of $\mathcal I$ corresponding to $F_{j_\alpha^p}$ and $p := (p_{1}, \ldots, p_{\omega_{p}})$ is an $\omega_{p}$-tuple of non-negative integers, then 
	\begin{equation*}
		\Delta_{p} = \{ V \in \mathbb{G}_{k}(\mathbb{C}^{l}) \hspace{0.165em} : \hspace{0.165em} \dim (V \cap F_{j_{\alpha}^{p}}) \geq i_{\alpha}^{p} + p_{\alpha}, \hspace{0.33em} \alpha = 1, \ldots, \omega_{p} \}
	\end{equation*}
	is the $\mathcal{S}$-variety associated to $\mathcal{F}_{p}$ and $\mathcal{I}_{p}$ if and only if the pair $(\mathcal F_p,\mathcal I_p)$ satisfies conditions \eqref{eq:weak conditions}. The vector $p$ is said to be \textbf{essential} if $(\mathcal F_p,\mathcal I_p)$ is essential.
\end{definition}

Notice that \textit{$\mathcal{S}$ is the $\mathcal{S}$-variety given by $p = (0, \ldots, 0)$}.

\begin{remark} With the notation of Definition~\ref{def:S-variety}, observe that the pair $(\mathcal F_p,\mathcal I_p)$ satisfies conditions \eqref{eq:weak conditions} if and only if
	\begin{equation*}
		\begin{cases*}
			0 \leq p_{1} \leq k - i_{1}^{p}\\
			p_{\omega_{p}} \geq k - l + j_{\omega_{p}}^{p} - i_{\omega_{p}}^{p}\\
			M_{\alpha + 1} \leq p_{\alpha + 1} \leq N_{\alpha + 1} \hspace{0.33em} \forall \alpha = 1, \ldots, \omega_{p} - 1
		\end{cases*}
	\end{equation*}
	with
	\begin{align*}
		M_{\alpha + 1} &= \max \{ 0, i_{\alpha}^{p} + p_{\alpha} - i_{\alpha + 1}^{p} \}\\
		N_{\alpha + 1} &= \min \{ j_{\alpha + 1}^{p} - j_{\alpha}^{p} + i_{\alpha}^{p} + p_{\alpha} - i_{\alpha + 1}^{p}, k - i_{\alpha + 1}^{p} \}.
	\end{align*} 
\end{remark}

In Section~\ref{SubsecFerrersdiagrams} we described a way to represent $\mathcal{S}$ by means of its Ferrer's diagram. Needless to say, we can depict all $\mathcal{S}$-varieties $\Delta_{p}$ in the same way and \textit{we denote their associated sequences by} $\lambda^{p}$.

Let $\Delta_{p}$ be an $\mathcal{S}$-variety. As observed in Section~\ref{SubsecDefinition}, not all the conditions of $\Delta_p$ are supposed to be indispensable and we can find the necessary ones by means of the procedure explained there. 

\begin{notation}
	Let $\Delta_p$ be an $\mathcal S$-variety. If $p = (p_{1}, \ldots, p_{\omega_p})$ is not essential, \textit{we will denote by $(\mathcal{F}_{\bar{p}},\mathcal{I}_{\bar{p}})$ the essential pair to which $\Delta_{p}$ is associated}, being $\bar{p}$ the vector obtained by $p$ by deleting the components corresponding to redundant conditions. When we think of $\Delta_p$ as the Schubert variety associated to its essential pair $(\mathcal{F}_{\bar{p}}, \mathcal{I}_{\bar{p}})$, we will denote it by $\Delta_{\bar{p}}$:
	\begin{equation*}
		\Delta_{p} = \Delta_{\bar p} = \{ V \in \mathbb{G}_{k}(\mathbb{C}^{l}) \hspace{0.165em} : \hspace{0.165em} \dim (V \cap F_{j_{\alpha}^{\bar{p}}}) \geq i_{\alpha}^{\bar{p}} + \bar{p}_{\alpha}, \hspace{0.33em} \alpha = 1, \ldots, \omega_{\bar{p}} \}.
	\end{equation*}
\end{notation}

Notice that \textit{$p$ is essential if and only if $p = \bar{p}$}.

If $\Delta_{p}$ and $\Delta_{q}$ are $\mathcal{S}$-varieties, we say that $\Delta_q$ is a \textbf{$\Delta_{p}$-variety} if it has the properties written in Definition~\ref{def:S-variety} with $\mathcal{S}$ and $(\mathcal{F}, \mathcal{I})$ replaced by $\Delta_{p}$ and its essential pair $(\mathcal{F}_{\bar{p}}, \mathcal{I}_{\bar{p}})$. In this case, $q$ is said to be $p$\textbf{-admissible}; when $\Delta_{p} = \mathcal{S}$, we shall simply say that $q$ is \textbf{admissible}.

Let us now study \textit{the inclusion relation on the family of $\mathcal{S}$-varieties}. Let $\Delta_{p}$ and $\Delta_{q}$ be two $\mathcal{S}$-varieties associated to $(\mathcal{F}_{p}, \mathcal{I}_{p})$ and $(\mathcal{F}_{q}, \mathcal{I}_{q})$, respectively. If $\mathcal{F}_{p} = \mathcal{F}_{q} = \mathcal{F}$, it is straightforward to see that $\Delta_{q} \subseteq \Delta_{p}$ if and only if $q_{\alpha} \geq p_{\alpha}$ for any $\alpha \in \{ 1, \ldots, \omega \}$. In the general case, we can change the pairs $(\mathcal{F}_{p}, \mathcal{I}_{p})$ and $(\mathcal{F}_{q}, \mathcal{I}_{q})$ by adding redundant conditions so as to have $\mathcal{F}_{p} = \mathcal{F}_{q} = \mathcal{F}$ again (see Example~\ref{ExaNotComp}).

\begin{notation}\label{not:ordinamento}
	Given two $\mathcal{S}$-varieties $\Delta_{p}$, $\Delta_{q}$, we set $p \leq q \Leftrightarrow \Delta_{q} \subseteq \Delta_{p}$. If $\Delta_{q} \subseteq \Delta_{p}$, we also set $\vert q - p \vert := \sum q_{\alpha} - p_{\alpha}$ and call it the \textit{distance} between $p$ and $q$.
\end{notation}

In terms of Ferrer's diagrams, we have that $\Delta_{q} \subseteq \Delta_{p}$ \textit{if and only if the Ferrer's diagram of} $\Delta_{p}$ \textit{is contained in the one of} $\Delta_{q}$ (see \cite[Proposition 3.2.3 (4)]{Man2001}). When $\mathcal{S}$ is a special Schubert variety, $p$ and $q$ are integers and, as such, comparable. Consequently, \textit{the set of all $\mathcal{S}$-varieties is totally ordered by inclusion}. On the contrary, when $\omega > 1$, $\Delta_{p}$ and $\Delta_{q}$ are unlikely to be comparable with respect to the inclusion relation (see Examples~\ref{ExaNotComp} and \ref{ExaComp}).

\begin{remark}
	If $\Delta_{p}$ is an $\mathcal{S}$-variety, then the families of $\Delta_{p}$-varieties and $\mathcal{S}$-varieties contained in $\Delta_{p}$ do not coincide, unless $\mathcal{F}_{\bar{p}} = \mathcal{F}$. Indeed, the notion of $\Delta_{p}$-variety is stronger (see Example~\ref{ExaComp}).
\end{remark}

Later, for any chosen $\mathcal{S}$-variety $\Delta_{p}$, we will be interested in the $\Delta_{p}$-varieties. However, several results (see Section~\ref{SecMainTh}) provide useful information on $\mathcal{S}$-varieties $\Delta_{q} \subset \Delta_{p}$ which are not $\Delta_{p}$-varieties if we make the following association.

\begin{notation}
	Let $\Delta_{p}$ and $\Delta_{q}$ be $\mathcal{S}$-varieties associated to flags $\mathcal{F}_{p} \subseteq \mathcal{F}_{q}$. We set $q^{p} = (q_{1}^{p}, \ldots, q_{\omega_{p}}^{p})$, where $q_{\alpha}^{p}$ is the component of $q$ in the position of the $\alpha$-th necessary condition of $\Delta_{p}$ (in other words, $q^{p}$ is obtained by $q$ by considering the same entries as $p$ in the hypothesis that $p = \bar{p}$).
\end{notation}

Observe that, by construction, $\Delta_{q^{p}}$ is a $\Delta_{p}$-variety; in particular $\Delta_{\bar{q}} = \Delta_{q^{p}}$ if and only if $\Delta_{q}$ is a $\Delta_{p}$-variety (see Example~\ref{ExaAdapt}). Furthermore, if $\Delta_{q}$ is not a $\Delta_{p}$-variety and $(\mathcal F_{q^{p}}, \mathcal I_{q^{p}})$ is replaced by the pair whose flag is $\mathcal F_{p}$, then we have $\vert q^{p} - p \vert < \vert q - p \vert$, according to Notation~\ref{not:ordinamento}.

\section{Schubert varieties and Decomposition theorem}\label{SecMainTh}

Here, we are going to define a class of resolution of singularities $\pi_{p}: \tilde{\Delta}_{p} \rightarrow \Delta_{p}$ (see Section~\ref{SubsecCommSq}). One of the main reasons why we chose this particular family is that we can always control the fibres of these maps, which will be fundamental for our purposes. In Section~\ref{SubsecDecTh}, we apply decomposition theorem to them so as to obtain information on the direct summands appearing in \eqref{decThmintr} (see Theorem~\ref{ThMain}) and, in Section~\ref{SubsecPolExpr}, certain classes of polynomial expressions.

\subsection{A family of resolution of singularities}\label{SubsecCommSq}

Let $H_{1} \subset \ldots \subset H_{n}$ be complex vector spaces and let $k_{1}, \ldots, k_{n}$ be positive integers such that $k_{\alpha} < \dim H_{\alpha}$ for any $\alpha = 1, \ldots, n$. Put
\begin{align*}
	\mathbb{F}(k_{1}, \ldots, k_{n}; H_{1}, \ldots, H_{n}) :=
	\begin{Bmatrix*}
		(K_{1}, \ldots, K_{n})\\
		\in \mathbb{G}_{k_{1}}(H_{1}) \times \ldots \times \mathbb{G}_{k_{n}}(H_{n})\\
		\mbox{s.t.} \quad \quad K_{1} \subset \ldots \subset K_{n}
	\end{Bmatrix*}.
\end{align*}

\begin{proposition}\label{PropFlagSmooth}
	$\mathbb{F}(k_{1}, \ldots, k_{n}; H_{1}, \ldots, H_{n})$ is smooth.
\end{proposition}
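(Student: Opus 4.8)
The plan is to exhibit $\mathbb{F} := \mathbb{F}(k_{1}, \ldots, k_{n}; H_{1}, \ldots, H_{n})$ as an iterated Grassmannian bundle over a point, since each stage of such a tower is a smooth map with smooth fibre, and a fibre bundle with smooth base and smooth fibre is smooth. Concretely, I would induct on $n$. For $n = 1$ the space is just the Grassmannian $\mathbb{G}_{k_{1}}(H_{1})$, which is smooth (it is a homogeneous space, or is covered by the standard affine charts). For the inductive step, I would consider the projection
\begin{equation*}
	\rho: \mathbb{F}(k_{1}, \ldots, k_{n}; H_{1}, \ldots, H_{n}) \longrightarrow \mathbb{F}(k_{1}, \ldots, k_{n-1}; H_{1}, \ldots, H_{n-1})
\end{equation*}
that forgets the last subspace $K_{n}$, sending $(K_{1}, \ldots, K_{n}) \mapsto (K_{1}, \ldots, K_{n-1})$. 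The target is smooth by the inductive hypothesis, so it suffices to show $\rho$ is a fibre bundle with smooth fibre.

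The key observation is that the fibre of $\rho$ over a point $(K_{1}, \ldots, K_{n-1})$ is the set of $k_{n}$-dimensional subspaces $K_{n} \subseteq H_{n}$ containing the fixed subspace $K_{n-1}$; choosing $K_{n}$ with $K_{n-1} \subseteq K_{n} \subseteq H_{n}$ and $\dim K_{n} = k_{n}$ is the same as choosing a $(k_{n} - k_{n-1})$-dimensional subspace of the quotient $H_{n}/K_{n-1}$, so the fibre is $\mathbb{G}_{k_{n} - k_{n-1}}(H_{n}/K_{n-1})$, which is smooth. To promote this fibrewise description to a genuine locally trivial bundle structure, I would note that over $\mathbb{F}(k_{1}, \ldots, k_{n-1}; H_{1}, \ldots, H_{n-1})$ there is a tautological rank-$k_{n-1}$ subbundle $\mathcal{K}_{n-1} \subset H_{n} \times \mathbb{F}(k_{1}, \ldots, k_{n-1}; H_{1}, \ldots, H_{n-1})$ (the last piece of the tautological flag), and $\mathbb{F}(k_{1}, \ldots, k_{n}; \ldots)$ is canonically identified with the Grassmannian bundle $\mathcal{G}_{k_{n} - k_{n-1}}\bigl((H_{n} \times \mathbb{F}(\ldots))/\mathcal{K}_{n-1}\bigr)$ over $\mathbb{F}(k_{1}, \ldots, k_{n-1}; \ldots)$, in the sense introduced in Section~\ref{SubsecDefinition}. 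A Grassmannian $h$-plane bundle of a vector bundle over a smooth base is smooth (it is Zariski-locally a product of the base with a Grassmannian, using local trivializations of the vector bundle), hence $\mathbb{F}(k_{1}, \ldots, k_{n}; \ldots)$ is smooth.

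The main obstacle is not conceptual but bookkeeping: one must check that the hypotheses $k_{\alpha} < \dim H_{\alpha}$ together with the flag conditions guarantee each quotient has the right dimension so that the relevant Grassmannian (bundle) is non-empty and of the expected rank — in particular $k_{n} - k_{n-1} \geq 0$ and $\dim(H_{n}/K_{n-1}) = \dim H_{n} - k_{n-1} \geq k_{n} - k_{n-1}$, which follows from $k_{n} < \dim H_{n}$ — and, more delicately, that the identification of $\mathbb{F}$ with a Grassmannian bundle is compatible with the local trivializations, i.e. that the tautological subbundle $\mathcal{K}_{n-1}$ really is a subbundle (locally a direct summand) so that the quotient bundle is again a vector bundle. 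This last point is standard once one writes down the affine charts on the partial flag variety explicitly, but it is the step that makes the fibre-bundle structure, as opposed to a mere set-theoretic fibration, rigorous.
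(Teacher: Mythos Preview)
Your proof is correct and follows essentially the same approach as the paper: both argue by induction on $n$, realizing $\mathbb{F}(k_{1},\ldots,k_{n};H_{1},\ldots,H_{n})$ as the Grassmannian $(k_{n}-k_{n-1})$-plane bundle of the quotient of the trivial bundle $H_{n}$ by the tautological rank-$k_{n-1}$ subbundle over $\mathbb{F}(k_{1},\ldots,k_{n-1};H_{1},\ldots,H_{n-1})$. The paper phrases this as pulling back the quotient bundle $Q_{\alpha-1}$ from $\mathbb{G}_{k_{\alpha-1}}(H_{\alpha-1})$ via the projection $\psi_{\alpha-1}$, which amounts to exactly the same construction you describe directly on the flag variety.
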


\begin{proof}
	If $n = 1$, $\mathbb{F}(k_{1}; H_{1}) = \mathbb{G}_{k_{1}}(H_{1})$ is smooth.
	
	Let $n \geq 2$.
	There is a chain of projections
	\begin{equation*}
		\begin{tikzcd}[column sep = small, row sep = tiny]
			\mathbb{F}(k_{1}, \ldots, k_{n}; H_{1}, \ldots, H_{n}) \arrow[r] \arrow[d, phantom, ""{coordinate, name=Z}] & \mathbb{F}(k_{1}, \ldots, k_{n - 1}; H_{1}, \ldots, H_{n - 1}) \arrow[r] & \dots \arrow[dll, rounded corners,
			to path={ -- ([xshift=2ex]\tikztostart.east)
				|- (Z) [near end]\tikztonodes
				-| ([xshift=-2ex]\tikztotarget.west)
				-- (\tikztotarget)}]\\
			\mathbb{F}(k_{1}, k_{2}; H_{1}, H_{2}) \arrow[r] & \mathbb{G}_{k_{1}}(H_{1}) &
		\end{tikzcd}
	\end{equation*}
	and each $\mathbb{F}(k_{1}, \ldots, k_{\alpha}; H_{1}, \ldots, H_{\alpha})$ is the Grassmannian bundle of a vector bundle over $\mathbb{F}(k_{1}, \ldots, k_{\alpha - 1}; H_{1}, \ldots, H_{\alpha - 1})$. In fact, for any $2 \leq \alpha \leq n$, there is an exact sequence of vector bundles
	\begin{equation*}
		\begin{tikzcd}[column sep = small]
			0 \arrow[r] & S_{\mathbb{G}_{k_{\alpha - 1}}(H_{\alpha - 1})} \arrow[r] \arrow[dr] & H_{\alpha} \arrow[r] \arrow[d] & Q_{\alpha - 1} \arrow[r] \arrow[dl] & 0\\
			& & \mathbb{G}_{k_{\alpha - 1}}(H_{\alpha - 1}) & &
		\end{tikzcd}
	\end{equation*}
	where $S_{\mathbb{G}_{k_{\alpha - 1}}(H_{\alpha - 1})}$ and $H_{\alpha}$ are, respectively, the tautological and trivial bundle over $\mathbb{G}_{k_{\alpha - 1}}(H_{\alpha - 1})$, while $Q_{\alpha - 1} = \coker (S_{\mathbb{G}_{k_{\alpha - 1}}(H_{\alpha - 1})} \rightarrow H_{\alpha})$.

	If we denote by $\psi_{\alpha - 1}: \mathbb{F}(k_{1}, \ldots, k_{\alpha - 1}; H_{1}, \ldots, H_{\alpha - 1}) \rightarrow \mathbb{G}_{k_{\alpha - 1}}(H_{\alpha - 1})$ the projection map and by $\psi_{\alpha - 1}^{*}$ its pullback, we have
	\begin{equation*}
		\mathbb{F}(k_{1}, \ldots, k_{\alpha}; H_{1}, \ldots, H_{\alpha}) \cong \mathcal{G}_{k_{\alpha} - k_{\alpha - 1}}(\psi_{\alpha - 1}^{*} Q_{\alpha - 1}).\qedhere
	\end{equation*}
\end{proof}

Let us go back to Schubert varieties. Given an $\mathcal{S}$-variety $\Delta_{p}$, put
\begin{equation*}
	\Delta_{p}^{0} = \{ V \in \mathbb{G}_{k}(\mathbb{C}^{l}) \hspace{0.165em} : \hspace{0.165em} \dim (V \cap F_{j_{\alpha}^{\bar{p}}}) = i_{\alpha}^{\bar{p}} + \bar{p}_{\alpha}, \hspace{0.33em} \alpha = 1, \ldots, \omega_{\bar{p}} \}.
\end{equation*}
$\Delta_{p}^{0}$ is a dense subset of $\Delta_{p}$; in fact, if we think of $\mathcal{F}_{\bar{p}}$ as the subflag of some complete flag $\mathcal{F}_{com}: F_{1} \subset \ldots \subset F_{l - 1}$, it is possible to prove that it contains the set
\begin{equation*}
	\Omega_{p} := \begin{Bmatrix*}
		V \in \mathbb{G}_{k}(\mathbb{C}^{l}) \hspace{0.165em} : \hspace{0.165em} \dim V \cap F_{\beta} = \alpha\\
		\mbox{if} \quad l - k + \alpha - \lambda_{\alpha}^{p} \leq \beta \leq l - k + \alpha - \lambda_{\alpha + 1}^{p}
	\end{Bmatrix*}
	,
\end{equation*}
which is a dense subset of $\Delta_{p}$ \cite[Proposition 3.2.3]{Man2001}. Furthermore,

\begin{proposition}
	$\Delta_{p}^{0}$ is the smooth locus of $\Delta_{p}$. In particular, it is a locally closed subset of $\mathcal{S}$.
\end{proposition}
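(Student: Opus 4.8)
The plan is to show that $\Delta_p^0$ coincides with the smooth locus of $\Delta_p$ by a dimension-count argument combined with the explicit description of $\Delta_p$ near a point where all incidence conditions are equalities. First I would reduce to proving two things: (a) $\Delta_p^0$ is open and dense in $\Delta_p$, and (b) $\Delta_p$ is singular along $\Delta_p\setminus\Delta_p^0$. Density is already recorded in the excerpt (via the dense subset $\Omega_p\subseteq\Delta_p^0$ and \cite[Proposition 3.2.3]{Man2001}); openness follows because the conditions $\dim(V\cap F_{j_\alpha^{\bar p}})\le i_\alpha^{\bar p}+\bar p_\alpha$ are open (upper semicontinuity of $\dim(V\cap F)$ in the Schubert variety direction), so $\Delta_p^0$, being the intersection of these open conditions with the closed $\Delta_p$, is open in $\Delta_p$. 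Hence $\Delta_p^0$ is a locally closed subset of $\mathcal S$.

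For smoothness of $\Delta_p^0$, I would work locally around a point $V_0\in\Delta_p^0$. Since $(\mathcal F_{\bar p},\mathcal I_{\bar p})$ is essential, at $V_0$ each intersection $\dim(V_0\cap F_{j_\alpha^{\bar p}})$ is exactly $i_\alpha^{\bar p}+\bar p_\alpha$; in a suitable affine chart of $\mathbb G_k(\mathbb C^l)$ (a big cell adapted to a complete flag refining $\mathcal F_{\bar p}$) these equality conditions cut out, locally, a product of affine Schubert cells, which is an affine space. Concretely, in the standard matrix coordinates on the big cell, requiring $\dim(V\cap F_{j})=m$ with $m$ maximal forces a block of coordinates to vanish identically and leaves the complementary coordinates free; doing this simultaneously for the nested $F_{j_\alpha^{\bar p}}$ yields an open subset isomorphic to an affine space. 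This shows $\Delta_p^0$ is smooth, and a count of the free coordinates recovers $\dim\Delta_p=\dim\mathbb G_k(\mathbb C^l)-\mathrm{area}(\lambda^p)$, consistent with the codimension formula quoted from \cite{GrHa1994}.

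It remains to show $\Delta_p$ is singular at every $V_0\in\Delta_p\setminus\Delta_p^0$, i.e.\ at a point where $\dim(V_0\cap F_{j_\alpha^{\bar p}})\ge i_\alpha^{\bar p}+\bar p_\alpha+1$ for at least one $\alpha$. Here I would identify the Zariski tangent space to $\Delta_p$ at $V_0$ and show it is strictly larger than $\dim\Delta_p$. The cleanest route is to use the known criterion for smooth points of Schubert varieties in terms of Ferrer's (Young) diagrams: a point $V_0$ lies in the smooth locus of the Schubert variety $\Delta_{\bar p}$ precisely when it lies in the open Schubert cell of $\Delta_{\bar p}$ associated to its own diagram $\lambda^p$, equivalently when all the defining incidence conditions are strict equalities — this is exactly the content of \cite[Proposition 3.2.3]{Man2001} and the classical result that the singular locus of a Schubert variety is a union of smaller Schubert varieties. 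Alternatively, one computes the tangent space directly: $T_{V_0}\mathbb G_k(\mathbb C^l)=\mathrm{Hom}(V_0,\mathbb C^l/V_0)$, and the tangent cone conditions coming from $\dim(V\cap F_{j_\alpha^{\bar p}})\ge i_\alpha^{\bar p}+\bar p_\alpha$ are linear inequalities on ranks of certain induced maps; when $V_0\notin\Delta_p^0$ these conditions fail to impose the full expected codimension on first-order deformations, so $\dim T_{V_0}\Delta_p>\dim\Delta_p$ and $V_0$ is singular.

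The main obstacle I anticipate is the last paragraph: showing singularity at the non-generic points. The density/openness of $\Delta_p^0$ and its smoothness are essentially bookkeeping with affine charts, but pinning down that the tangent space genuinely jumps — rather than merely not obviously being minimal — requires either invoking the precise structure theorem for singular loci of Schubert varieties (which the paper may prefer to cite from \cite{BiLa2000} or \cite{Man2001}) or carrying out the explicit Jacobian/tangent-space computation with the nested rank conditions, keeping careful track of which coordinate blocks are forced to vanish. I would lean on the cited literature for this step to keep the proof short, remarking that $\mathrm{Sing}(\Delta_{\bar p})$ is a proper closed union of Schubert subvarieties and that $\Delta_p^0$ is exactly the complement, which is the open cell of $\Delta_{\bar p}$.
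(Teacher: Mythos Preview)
Your proposal is correct but considerably more elaborate than the paper's argument. The paper proceeds in a single stroke: it cites \cite[Example 3.4.3, Theorem 3.4.4]{Man2001} for the fact that $\Sing\Delta_p$ is precisely the union of the $\Delta_p$-varieties at distance $1$ from $\Delta_p$, namely $\Delta_{(1,0,\ldots,0)}\cup\cdots\cup\Delta_{(0,\ldots,0,1)}$, and then computes the complement set-theoretically to see that it equals $\Delta_p^0$. Your route---first establishing openness and density of $\Delta_p^0$, then smoothness via an explicit big-cell chart, then singularity on the complement via a tangent-space jump---reaches the same conclusion but does redundant work: once the singular locus is identified as that union, both smoothness of $\Delta_p^0$ and singularity of its complement are immediate, and openness/density come for free. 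You do acknowledge at the end that leaning on the cited structure of $\Sing\Delta_p$ is the cleanest way forward, which is exactly what the paper does from the outset; so your final paragraph essentially converges to the paper's proof. Two small corrections: the relevant citation in Manivel for the singular locus is \S 3.4, not Proposition 3.2.3 (the latter concerns Bruhat order and containment); and your remark that the smooth locus equals ``the open Schubert cell'' is imprecise---$\Delta_p^0$ is defined relative to the \emph{partial} flag $\mathcal F_{\bar p}$ and strictly contains the open cell $\Omega_p$ associated to a complete refinement.
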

\begin{proof}
	We want to prove that $\Delta_{p}^{0} = \Delta_{p} \backslash \Sing \Delta_{p}$.
	
	W.l.o.g.~we can assume that $p = \bar{p}$. The singular locus of $\Delta_{p}$ coincides with the union of all $\Delta_{p}$-varieties whose distance from $\Delta_{p}$ is~$1$ \cite[Example 3.4.3, Theorem 3.4.4]{Man2001}. As a consequence, the smooth locus of $\Delta_{p}$ is
	\begin{align*}
		&\Delta_{p} \backslash (\Delta_{(1, 0, \ldots, 0)} \cup \ldots \cup \Delta_{(0, \ldots, 0, 1)})\\
		&\qquad = \Delta_{p} \backslash \Delta_{(1, 0, \ldots, 0)} \cap \ldots \cap \Delta_{p} \backslash \Delta_{(0, \ldots, 0, 1)}\\
		&\qquad = \{ V \in \Delta_{p} \hspace{0.165em} : \hspace{0.165em} \dim(V \cap F_{j_{1}^{p}}) = i_{1}^{p} + p_{1} \} \cap \ldots\\
		&\qquad \qquad \cap \{ V \in \Delta_{p} \hspace{0.165em} : \hspace{0.165em} \dim(V \cap F_{j_{\omega_{p}}^{p}}) = i_{\omega_{p}}^{p} + p_{\omega_{p}} \}\\
		&\qquad = \{ V \in \Delta_{p} \hspace{0.165em} : \hspace{0.165em} \dim(V \cap F_{j_{\alpha}^{p}}) = i_{\alpha}^{p} + p_{\alpha}, \hspace{0.33em} \alpha = 1, \ldots, \omega_{p} \}\\
		&\qquad = \Delta_{p}^{0}. \qedhere
	\end{align*}
\end{proof}

Now, set
\begin{align*}
	\tilde{\Delta}_{p} &:= \mathbb{F}(i_{1}^{\bar{p}} + \bar{p}_{1}, \ldots, i_{\omega_{\bar{p}}}^{\bar{p}} + \bar{p}_{\omega_{\bar{p}}}, k; F_{j_{1}^{\bar{p}}}, \ldots, F_{j_{\omega_{\bar{p}}}^{\bar{p}}}, \mathbb{C}^{l})\\
	&= \begin{Bmatrix*}
		(Z_{1}, \ldots, Z_{\omega_{\bar{p}}}, V)\\
		\in \mathbb{G}_{i_{1}^{\bar{p}} + \bar{p}_{1}}(F_{j_{1}^{\bar{p}}}) \times \ldots \times \mathbb{G}_{i_{\omega_{\bar{p}}}^{\bar{p}} + \bar{p}_{\omega_{\bar{p}}}}(F_{j_{\omega_{\bar{p}}}^{\bar{p}}}) \times \mathbb{G}_{k}(\mathbb{C}^{l})\\
		\mbox{s.t.} \quad Z_{1} \subset \ldots \subset Z_{\omega_{\bar{p}}} \subset V
	\end{Bmatrix*}
	.
\end{align*}

\begin{corollary}\label{CorResolPI}
	$\tilde{\Delta}_{p}$ is smooth and the projection
	\begin{equation*}
		\pi_{p}: (Z_{1}, \ldots, Z_{\omega_{\bar{p}}}, V) \in \tilde{\Delta}_{p} \mapsto V \in \Delta_{p}.
	\end{equation*}
	is a resolution of singularities.
\end{corollary}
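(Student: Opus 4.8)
The plan is to show two things: that $\tilde{\Delta}_p$ is smooth, and that $\pi_p$ is a resolution of singularities, i.e.\ that it is proper, that $\tilde{\Delta}_p$ is irreducible (or at least that $\pi_p$ restricts to an isomorphism over a dense open set), and that it is an isomorphism over the smooth locus $\Delta_p^0$.

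First I would observe that $\tilde{\Delta}_p$ is, by definition, the flag bundle $\mathbb{F}(i_1^{\bar p}+\bar p_1,\dots,i_{\omega_{\bar p}}^{\bar p}+\bar p_{\omega_{\bar p}},k;F_{j_1^{\bar p}},\dots,F_{j_{\omega_{\bar p}}^{\bar p}},\mathbb{C}^l)$, so smoothness is immediate from Proposition~\ref{PropFlagSmooth}. The same proposition (via the tower of Grassmann-bundle projections built in its proof) also shows $\tilde{\Delta}_p$ is irreducible and gives its dimension, and exhibits it as a closed subvariety of the product of Grassmannians, hence projective; therefore $\pi_p$, being a morphism of projective varieties, is proper. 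Surjectivity onto $\Delta_p$ is clear: given $V\in\Delta_p$ with $\dim(V\cap F_{j_\alpha^{\bar p}})\ge i_\alpha^{\bar p}+\bar p_\alpha$, one may choose nested subspaces $Z_1\subset\dots\subset Z_{\omega_{\bar p}}\subset V$ with $Z_\alpha\subseteq V\cap F_{j_\alpha^{\bar p}}$ of the prescribed dimension $i_\alpha^{\bar p}+\bar p_\alpha$ (such a chain exists precisely because conditions \eqref{eq:weak conditions} hold for $(\mathcal F_{\bar p},\mathcal I_{\bar p})$, which guarantee the dimension constraints are mutually compatible).

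Next I would identify the locus over which $\pi_p$ is an isomorphism. Over $\Delta_p^0$, every point $V$ satisfies $\dim(V\cap F_{j_\alpha^{\bar p}})=i_\alpha^{\bar p}+\bar p_\alpha$ exactly, so the only admissible choice is $Z_\alpha=V\cap F_{j_\alpha^{\bar p}}$, and these are automatically nested since the $F_{j_\alpha^{\bar p}}$ are. Thus $\pi_p^{-1}(V)$ is a single reduced point over $\Delta_p^0$, and the inverse map $V\mapsto(V\cap F_{j_1^{\bar p}},\dots,V\cap F_{j_{\omega_{\bar p}}^{\bar p}},V)$ is a morphism (the intersections vary algebraically on $\Delta_p^0$ because the ranks are locally constant there), so $\pi_p$ restricts to an isomorphism $\pi_p^{-1}(\Delta_p^0)\xrightarrow{\sim}\Delta_p^0$. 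Since $\Delta_p^0$ is dense in $\Delta_p$ and, by the previous proposition, equals the smooth locus, this establishes that $\pi_p$ is a birational proper morphism from a smooth variety that is an isomorphism over $\Delta_p\setminus\Sing\Delta_p$, i.e.\ a resolution of singularities.

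The main obstacle, and the only point requiring genuine care rather than bookkeeping, is the surjectivity together with the claim that $\tilde{\Delta}_p$ is irreducible with $\pi_p$ birational: one must be sure that the fibre over a general point is a single point and not positive-dimensional, which comes down to verifying that for $V$ in the dense open $\Delta_p^0$ the intersections $V\cap F_{j_\alpha^{\bar p}}$ have exactly the expected dimensions and form a chain — this is exactly where the essentiality of $(\mathcal F_{\bar p},\mathcal I_{\bar p})$ and conditions \eqref{eq:weak conditions} enter. Once that is in hand, irreducibility of $\tilde{\Delta}_p$ follows because it is a flag bundle over an irreducible base (the tower in Proposition~\ref{PropFlagSmooth}) dominating the irreducible $\Delta_p$ generically one-to-one, and everything else is routine.
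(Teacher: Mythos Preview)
Your proposal is correct and follows essentially the same approach as the paper: smoothness is deduced from Proposition~\ref{PropFlagSmooth}, and birationality is established by exhibiting the explicit inverse $V\mapsto (V\cap F_{j_1^{\bar p}},\dots,V\cap F_{j_{\omega_{\bar p}}^{\bar p}},V)$ over the dense open $\Delta_p^0$. The paper's proof is considerably terser---it omits the discussion of properness, surjectivity, and irreducibility that you spell out---but the core argument is identical.
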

\begin{proof}
	Smoothness is a consequence of Lemma~\ref{PropFlagSmooth}.

	If $V \in \Delta_{p}^{0}$, then $\dim (V \cap F_{j_{\alpha}^{\bar{p}}}) = i_{\alpha}^{\bar{p}} + \bar{p}_{\alpha}$ for all $\alpha = 1, \ldots, \omega_{p}$ and, consequently,
	\begin{equation*}
		\pi_{p}^{-1}(V) \cong \{ (V \cap F_{j_{1}^{\bar{p}}}, \ldots, V \cap F_{j_{\omega_{\bar{p}}}^{\bar{p}}}) \}
	\end{equation*}
	gives the inverse map of $\pi_{p}$ on the open set $\Delta_{p}^{0}$.
\end{proof}

Let $\Delta_{q} \subset \Delta_{p}$ be two $\mathcal{S}$-varieties. Set
\begin{align*}
	\Delta_{pq}^{0} := \pi_{p}^{-1}(\Delta_{q}^{0}) = \begin{Bmatrix*}
		(Z_{1}, \ldots, Z_{\omega_{\bar{p}}}, V) \in \tilde{\Delta}_{p}\\
		\mbox{s.t.} \quad \dim (V \cap F_{j_{\alpha}^{\bar{p}}}) = i_{\alpha}^{\bar{p}} + q_{\alpha}^{p}, \hspace{0.33em} \alpha = 1, \ldots, \omega_{\bar{p}}
	\end{Bmatrix*}
	.
\end{align*}
The restriction of $\pi_{p}$
\begin{equation*}
	\rho_{pq}: (Z_{1}, \ldots, Z_{\omega_{\bar{p}}}, V) \in \Delta_{pq}^{0} \mapsto V \in \Delta_{q}^{0}
\end{equation*}
is a smooth and proper fibration with fibres
\begin{align*}
	F_{pq} := \rho_{pq}^{-1}(V) &\cong
	\begin{Bmatrix*}
		(Z_{1}, \ldots, Z_{\omega_{\bar{p}}})\\
		\in \mathbb{G}_{i_{1}^{\bar{p}} + \bar{p}_{1}}(V \cap F_{j_{1}^{\bar{p}}}) \times \ldots \times \mathbb{G}_{i_{\omega_{\bar{p}}}^{\bar{p}} + \bar{p}_{\omega_{\bar{p}}}}(V \cap F_{j_{\omega_{\bar{p}}}^{\bar{p}}})\\
		\mbox{s.t.} \quad \dim (V \cap F_{j_{\alpha}^{\bar{p}}}) = i_{\alpha}^{\bar{p}} + q_{\alpha}^{p}, \hspace{2mm} \alpha = 1, \ldots, \omega_{\bar{p}}
	\end{Bmatrix*}\\
	&\cong \mathbb{F}(i_{1}^{\bar{p}} + \bar{p}_{1}, \ldots, i_{\omega_{\bar{p}}}^{\bar{p}} + \bar{p}_{\omega_{\bar{p}}}; \mathbb{C}^{i_{1}^{\bar{p}} + q_{1}^{p}}, \ldots, \mathbb{C}^{i_{\omega_{\bar{p}}}^{\bar{p}} + q_{\omega_{\bar{p}}}^{\bar{p}}}),
\end{align*}
whose dimensions are
\begin{equation*}
	\dim F_{pq} = (q_{1}^{p} - \bar{p}_{1})(i_{1}^{\bar{p}} + \bar{p}_{1}) + \sum_{\alpha = 2}^{\omega_{\bar{p}}} (q_{\alpha}^{p} - \bar{p}_{\alpha})(i_{\alpha}^{\bar{p}} + \bar{p}_{\alpha} - i_{\alpha - 1}^{\bar{p}} - \bar{p}_{\alpha - 1}).
\end{equation*}
This quantity can be interpreted by means of Ferrer's diagrams as shown in Example~\ref{ExaFpqGpq}.

\begin{remark}\label{RemFibres}
	If $\Delta_{q}, \Delta_{q'}$ are two $\mathcal{S}$-varieties such that $\Delta_{q^{p}} = \Delta_{q^{\prime p}}$, then $F_{pq} = F_{pq'}$. This fact occurs, for instance, in Example~\ref{ExaAdapt}.
\end{remark}

All spaces and maps defined up to now fit in a cartesian square \cite[Definition 5.1, p. 34]{Ive1986}
\begin{equation}\label{EqCartSquare}
	\begin{tikzcd}
		\Delta_{pq}^{0} \arrow[r, hook, "j_{pq}"] \arrow[d, swap, "\rho_{pq}"] & \tilde{\Delta}_{p} \arrow[d, "\pi_{p}"]\\
		\Delta_{q}^{0} \arrow[r, hook, swap, "i_{pq}^{0}"] & \Delta_{p}
	\end{tikzcd}
\end{equation}
whose horizontal arrows are inclusions and, in particular, $i_{pq}^{0} = i_{pq}|_{\Delta_{q}^{0}}$ is the restriction of the inclusion $i_{pq}: \Delta_{q} \hookrightarrow \Delta_{p}$.

\subsection{Application of the decomposition theorem}\label{SubsecDecTh}

Let $\Delta_{p}$ be an $\mathcal{S}$-variety and let $\Delta_{q}$ be a $\Delta_{p}$-variety. Put
\begin{alignat*}{5}
	m_{p} &:= \dim \Delta_{p}, \qquad && k_{pq} &&:= \dim F_{pq}, \qquad && d_{pq} &&:= m_{p} - m_{q}  - k_{pq},\\
	\delta_{pq} &:= k_{pq} - d_{pq}, \qquad && A_{pq}^{\alpha} &&:= H^{\alpha}(F_{pq}), \qquad && a_{pq}^{\alpha} &&:= \dim_{\mathbb{Q}} A_{pq}^{\alpha}.
\end{alignat*}
From the square \eqref{EqCartSquare} we infer \cite[Formula (15) and Remark 3.1]{Fra2020}
\begin{equation}\label{EqFra01}
	R \pi_{p*} \mathbb{Q}_{\tilde{\Delta}_{p}} \left[ m_{p} \right]|_{\Delta_{q}^{0}} \cong R \rho_{pq*} \mathbb{Q}_{\Delta_{pq}^{0}} \left[ m_{p} \right] \cong \bigoplus_{\alpha = 0}^{2 k_{pq}} A_{pq}^{\alpha} \otimes \mathbb{Q}_{\Delta_{q}^{0}} \left[ m_{p} - \alpha \right]
\end{equation}
and, for any $\alpha \in \mathbb{Z}$,
\begin{equation}\label{EqFra02}
	\prescript{\mathfrak{p}}{}{\mathcal{H}}^{\alpha}(R \pi_{p*} \mathbb{Q}_{\tilde{\Delta}_{p}}|_{\Delta_{q}^{0}}) \cong A_{pq}^{\alpha - m_{q}} \otimes \mathbb{Q}_{\Delta_{q}^{0}} [m_{q}].
\end{equation}

\begin{theorem}(Decomposition theorem \cite[1.6.1]{dCaMi2009})\label{ThDec}
	Let $f: X \rightarrow Y$ be a proper map of complex algebraic varieties. There is an isomorphism in the constructible bounded derived category $D_{c}^{b}(Y)$
	\begin{equation*}
		Rf_{*} IC_{X} \cong \bigoplus_{\alpha \in \mathbb{Z}} \prescript{\mathfrak{p}}{}{\mathcal{H}}^{\alpha}(Rf_{*} IC_{X}) \left[ - \alpha \right].
	\end{equation*}
	Furthermore, the perverse sheaves $\prescript{\mathfrak{p}}{}{\mathcal{H}}^{\alpha}(Rf_{*} IC_{X})$ are semisimple; i.e.~there is a decomposition into finitely many disjoint locally closed and nonsingular subvarieties $Y = \coprod S_{\beta}$ and a canonical decomposition into a direct sum of intersection complexes of semisimple local systems
	\begin{equation*}
		\prescript{\mathfrak{p}}{}{\mathcal{H}}^{\alpha}(Rf_{*} IC_{X}) \cong \bigoplus_{\beta} IC_{\overline{S_{\beta}}}(L_{\beta}).
	\end{equation*}
\end{theorem}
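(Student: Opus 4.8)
The plan is to treat Theorem~\ref{ThDec} as the deep input it is --- there is no short direct argument --- and to follow one of the two standard routes: the Hodge-theoretic proof of \cite{dCaMi2009} cited in the statement, or the arithmetic proof of \cite{BeBeDe1982}. In either case the argument breaks into three stages. \emph{First}, reduce to the case of a \emph{projective} morphism $f\colon X\to Y$ with $X$ smooth quasi-projective, using Chow's lemma, resolution of singularities, factorisation through a projective bundle, and the stability of the class of complexes satisfying the conclusion under proper pushforward and under passage to direct summands (an $IC$-sheaf of a variety being a summand of the constant sheaf of a resolution). \emph{Second}, prove the degeneration $Rf_{*}IC_{X}\cong\bigoplus_{\alpha}\prescript{\mathfrak{p}}{}{\mathcal{H}}^{\alpha}(Rf_{*}IC_{X})[-\alpha]$. \emph{Third}, show that each $\prescript{\mathfrak{p}}{}{\mathcal{H}}^{\alpha}(Rf_{*}IC_{X})$ is a semisimple perverse sheaf, hence, by the classification of simple perverse sheaves, a direct sum of intersection complexes $IC_{\overline{S_{\beta}}}(L_{\beta})$ of irreducible local systems on the strata of a locally closed stratification of $Y$.

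For the second stage one fixes a class $\eta$ ample relative to $f$ and establishes \emph{Relative Hard Lefschetz}: for every $i\geq 0$ the cup product $\eta^{i}$ induces an isomorphism $\prescript{\mathfrak{p}}{}{\mathcal{H}}^{-i}(Rf_{*}\mathbb{Q}_{X}[\dim X])\xrightarrow{\ \sim\ }\prescript{\mathfrak{p}}{}{\mathcal{H}}^{i}(Rf_{*}\mathbb{Q}_{X}[\dim X])$. Granting this, the degeneration is formal: the hard-Lefschetz $\mathfrak{sl}_{2}$-action on the perverse cohomology objects splits the complex, by Deligne's degeneration criterion. The third stage carries the real weight: one equips the primitive parts of the Lefschetz decomposition of each $\prescript{\mathfrak{p}}{}{\mathcal{H}}^{\alpha}$ with polarizations --- a system of \emph{Hodge--Riemann bilinear relations} --- and proves Relative Hard Lefschetz, the Hodge--Riemann relations, the degeneration and the semisimplicity \emph{simultaneously}, by a descending induction on $\dim Y$ and on the relative dimension of $f$. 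The inductive step cuts $Y$ with a generic hyperplane, compares the perverse cohomology of $f$ with that of its restriction over the hyperplane section via the global and local Lefschetz hyperplane theorems, and on the open dense stratum invokes the semisimplicity of polarizable variations of Hodge structure (Deligne, Schmid) to split the monodromy representation; $IC$-extension along the stratification then assembles the summands $IC_{\overline{S_{\beta}}}(L_{\beta})$.

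The arithmetic alternative replaces Hodge theory by Frobenius weights: spread $X$, $Y$, $f$ out over a finitely generated $\mathbb{Z}$-algebra, reduce to the algebraic closure of a finite field, and invoke Deligne's Weil~II to conclude that $Rf_{*}IC_{X}$ is pure and that each $\prescript{\mathfrak{p}}{}{\mathcal{H}}^{\alpha}(Rf_{*}IC_{X})$ is a pure perverse sheaf of weight $\dim X+\alpha$. Gabber's theorem that a pure perverse $\overline{\mathbb{Q}}_{\ell}$-sheaf is semisimple then furnishes the third stage; the degeneration follows because, after the shift $[-\alpha]$, all the summands $\prescript{\mathfrak{p}}{}{\mathcal{H}}^{\alpha}(Rf_{*}IC_{X})[-\alpha]$ are pure of the same weight $\dim X$, so there is no nonzero degree-one morphism among them; and one transfers the splitting back to $\mathbb{C}$ with $\mathbb{Q}$-coefficients by spreading out and specialising, using that semisimplicity descends along field extensions.

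In both proofs the genuine obstacle is the third stage --- concretely, the \emph{relative} Hodge--Riemann bilinear relations in the Hodge-theoretic argument, or Weil~II together with Gabber's semisimplicity in the arithmetic one; the first stage is a formal d\'evissage and the second is a formal consequence of Relative Hard Lefschetz. Since the present paper only needs the \emph{conclusion} of Theorem~\ref{ThDec}, applied to the explicit resolutions $\pi_{p}\colon\tilde{\Delta}_{p}\to\Delta_{p}$ of Corollary~\ref{CorResolPI}, I would import the theorem as stated and devote the remaining sections to making the abstract decomposition \eqref{decThmintr} completely explicit in the Schubert-variety case.
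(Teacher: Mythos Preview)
Your proposal is correct and indeed more than the paper itself provides: Theorem~\ref{ThDec} is stated in the paper purely as a quotation from \cite[1.6.1]{dCaMi2009}, with no proof or proof sketch attached. Your recognition that the theorem should be imported as a black box, together with your accurate outline of the two standard routes (Hodge-theoretic via relative Hard Lefschetz and Hodge--Riemann relations, or arithmetic via Weil~II and Gabber semisimplicity), goes well beyond what the paper does and is entirely appropriate.
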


\begin{theorem}\label{ThMain}
	\
	\begin{enumerate}[label = \roman*)]
		\item	For any $\mathcal{S}$-variety $\Delta_{p}$,
		\begin{equation*}
			\prescript{\mathfrak{p}}{}{\mathcal{H}}^{\alpha}(R \pi_{p*} \mathbb{Q}_{\tilde{\Delta}_{p}} [m_{p}]) \cong \bigoplus_{\substack{q \geq p\\ p-adm.}} D_{pq}^{\delta_{pq} + \alpha} \otimes R i_{pq*} IC_{\Delta_{q}}^{\bullet},
		\end{equation*}
		for suitable vector spaces such that $D_{pq}^{\delta_{pq} - \alpha} \cong D_{pq}^{\delta_{pq} + \alpha} \hspace{0.33em} \forall \alpha \geq 0$.
		\item	Given two $\mathcal{S}$-varieties $\Delta_{q} \subseteq \Delta_{p}$,
		\begin{equation*}
			IC_{\Delta_{p}}^{\bullet} [- m_{p}]|_{\Delta_{q}^{0}} \cong \bigoplus_{\alpha \geq 0} B_{pq}^{\alpha} \otimes \mathbb{Q}_{\Delta_{q}^{0}} [- \alpha]
		\end{equation*}
		for suitable vector spaces $B_{pq}^{\alpha}$.
	\end{enumerate}
\end{theorem}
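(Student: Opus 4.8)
The plan is to apply Theorem~\ref{ThDec} to $K:=R\pi_{p*}\mathbb{Q}_{\tilde{\Delta}_{p}}[m_{p}]$ and then to cut the result down by means of the fibration formulas \eqref{EqFra01}--\eqref{EqFra02}, using throughout that $\Delta_{p}=\coprod\Delta_{q}^{0}$, the union running over the $p$-admissible $q$, is an algebraic stratification of $\Delta_{p}$ into smooth, irreducible (hence connected) locally closed subvarieties. By \eqref{EqFra01} the restriction of $K$ to each stratum $\Delta_{q}^{0}$ is a direct sum of shifted constant sheaves, so $K$ is constructible with respect to $\{\Delta_{q}^{0}\}$; Theorem~\ref{ThDec} then gives $K\cong\bigoplus_{\alpha}\prescript{\mathfrak{p}}{}{\mathcal{H}}^{\alpha}(K)[-\alpha]$ with each $\prescript{\mathfrak{p}}{}{\mathcal{H}}^{\alpha}(K)$ semisimple, and constructibility forces its simple summands to be of the shape $IC_{\Delta_{q}}^{\bullet}(L)$ with $\Delta_{q}$ a $p$-admissible $\mathcal{S}$-variety and $L$ an irreducible local system on $\Delta_{q}^{0}$; thus $\prescript{\mathfrak{p}}{}{\mathcal{H}}^{\alpha}(K)\cong\bigoplus_{q\ p\text{-adm.}}IC_{\Delta_{q}}^{\bullet}(L_{q}^{\alpha})$ for suitable semisimple local systems $L_{q}^{\alpha}$.

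The crux of i) is that every $L_{q}^{\alpha}$ is trivial. Restricting the last decomposition to $\Delta_{q}^{0}$ and invoking the strict support conditions for intersection complexes — the summands with support $\Delta_{s}\supsetneq\Delta_{q}$ contribute on $\Delta_{q}^{0}$ only in cohomological degrees $\le -m_{q}-1$, while those with $\Delta_{q}\not\subseteq\Delta_{s}$ vanish there — one obtains $\mathcal{H}^{-m_{q}}(\prescript{\mathfrak{p}}{}{\mathcal{H}}^{\alpha}(K)|_{\Delta_{q}^{0}})\cong L_{q}^{\alpha}$. On the other hand, restricting the splitting $K\cong\bigoplus_{\alpha}\prescript{\mathfrak{p}}{}{\mathcal{H}}^{\alpha}(K)[-\alpha]$ to $\Delta_{q}^{0}$ and passing to cohomology sheaves yields, for every $j$, an isomorphism $\bigoplus_{\alpha}\mathcal{H}^{\,j-\alpha}(\prescript{\mathfrak{p}}{}{\mathcal{H}}^{\alpha}(K)|_{\Delta_{q}^{0}})\cong\mathcal{H}^{j}(K|_{\Delta_{q}^{0}})\cong A_{pq}^{\,j+m_{p}}\otimes\mathbb{Q}_{\Delta_{q}^{0}}$, the last step again by \eqref{EqFra01}; since a direct summand of a constant sheaf on the connected $\Delta_{q}^{0}$ is constant, choosing $j=\alpha-m_{q}$ shows $L_{q}^{\alpha}$ to be constant. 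Hence $\prescript{\mathfrak{p}}{}{\mathcal{H}}^{\alpha}(K)\cong\bigoplus_{q\ p\text{-adm.}}W_{q}^{\alpha}\otimes IC_{\Delta_{q}}^{\bullet}$ with $W_{q}^{\alpha}$ finite-dimensional $\mathbb{Q}$-vector spaces, and $Ri_{pq*}IC_{\Delta_{q}}^{\bullet}=i_{pq*}IC_{\Delta_{q}}^{\bullet}$ as $i_{pq}$ is a closed immersion; setting $D_{pq}^{\delta_{pq}+\alpha}:=W_{q}^{\alpha}$ gives the formula. The symmetry $D_{pq}^{\delta_{pq}-\alpha}\cong D_{pq}^{\delta_{pq}+\alpha}$ I would obtain from Verdier self-duality of $K$ (valid because $\tilde{\Delta}_{p}$ is smooth of dimension $m_{p}$ and $\pi_{p}$ is proper), which gives $\prescript{\mathfrak{p}}{}{\mathcal{H}}^{-\alpha}(K)\cong\mathbb{D}\,\prescript{\mathfrak{p}}{}{\mathcal{H}}^{\alpha}(K)$ and, via $\mathbb{D}\,IC_{\Delta_{q}}^{\bullet}\cong IC_{\Delta_{q}}^{\bullet}$, the relation $W_{q}^{-\alpha}\cong(W_{q}^{\alpha})^{\vee}$.

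For ii), first take $q=p$ in i): since $F_{pp}$ is a point (Corollary~\ref{CorResolPI}) one has $\delta_{pp}=0$, $\dim D_{pp}^{0}=1$ and $D_{pp}^{\alpha}=0$ for $\alpha\neq 0$, so $IC_{\Delta_{p}}^{\bullet}$ is a direct summand of $K$. I would then induct on $m_{p}-m_{q}$, reducing to $\Delta_{q}$ $p$-admissible (else $\Delta_{q}^{0}$ lies inside $\Delta_{r}^{0}$ for a $p$-admissible $\Delta_{r}\supseteq\Delta_{q}$, and one restricts further). Restricting the decomposition of $K$ from i) to $\Delta_{q}^{0}$, the summands indexed by $\Delta_{q'}\not\supseteq\Delta_{q}$ vanish, the summand indexed by $\Delta_{q'}=\Delta_{p}$ reduces to $IC_{\Delta_{p}}^{\bullet}|_{\Delta_{q}^{0}}$ (again because $F_{pp}$ is a point), and the remaining ones, indexed by $\Delta_{q}\subseteq\Delta_{q'}\subsetneq\Delta_{p}$, are direct sums of shifted constant sheaves by the inductive hypothesis; as $K|_{\Delta_{q}^{0}}$ is itself such a sum by \eqref{EqFra01}, it remains to observe that a direct summand of a finite direct sum of shifted constant sheaves on the connected $\Delta_{q}^{0}$ is again of this form — the cohomology sheaves are direct summands of constant sheaves, hence constant, and the splitting holds because the positive-degree part of the relevant graded endomorphism algebra, being built from $H^{>0}(\Delta_{q}^{0};\mathbb{Q})$, is a nilpotent ideal, so idempotents are conjugate to their degree-zero components. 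Therefore $IC_{\Delta_{p}}^{\bullet}|_{\Delta_{q}^{0}}$ is a direct sum of shifted constant sheaves; twisting by $[-m_{p}]$ and noting that the stalks of $IC_{\Delta_{p}}^{\bullet}[-m_{p}]$ live in non-negative degrees produces the asserted expression with suitable $B_{pq}^{\alpha}$.

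I expect the triviality of the local systems $L_{q}^{\alpha}$ in i) to be the main obstacle: the rest is formal bookkeeping around Theorem~\ref{ThDec}, Verdier duality and \eqref{EqFra01}--\eqref{EqFra02}, whereas triviality is exactly where the specific geometry of $\pi_{p}$ enters — over each stratum it is a Zariski-locally trivial tower of Grassmann bundles, carrying no monodromy on the cohomology of its fibres — and one must also be careful to verify that only the $p$-admissible $\mathcal{S}$-varieties, and not every Schubert subvariety of $\Delta_{p}$, can occur as supports.
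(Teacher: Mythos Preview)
Your argument is correct and rests on the same pillars as the paper's (the decomposition theorem, the explicit description \eqref{EqFra01} of $K|_{\Delta_q^0}$, and the support conditions for $IC$), but the architecture is genuinely different. The paper proves i) and ii) by a \emph{simultaneous} induction: to show that the local system on $\Delta_q^0$ is trivial it first peels off, via the inductive hypothesis for ii), all contributions $IC_{\Delta_\tau}^{\bullet}|_{\Delta_q^0}$ with $p<\tau<q$, and only then compares with \eqref{EqFra01}; ii) is in turn deduced from i) at the same inductive level. Your route decouples the two: for i) you read off $L_q^\alpha$ directly as $\mathcal{H}^{-m_q}\bigl(\prescript{\mathfrak{p}}{}{\mathcal{H}}^{\alpha}(K)|_{\Delta_q^0}\bigr)$ via strict support, embed it as a summand of $\mathcal{H}^{\alpha-m_q}(K|_{\Delta_q^0})$, and conclude constancy immediately from \eqref{EqFra01}---no induction needed. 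This is slicker. For the symmetry you invoke Verdier self-duality of $K$, whereas the paper uses the relative hard Lefschetz theorem; both give $D_{pq}^{\delta_{pq}-\alpha}\cong D_{pq}^{\delta_{pq}+\alpha}$, the paper's version being an actual isomorphism of perverse sheaves rather than just an equality of multiplicities. For ii), both arguments are inductive and must pass from ``all cohomology sheaves of $IC_{\Delta_p}^{\bullet}|_{\Delta_q^0}$ are constant'' to ``the complex itself splits''; the paper dispatches this by citing \cite[Remark~1.5.1]{dCaMi2009}, while you supply the idempotent-lifting argument explicitly (nilpotence of the positive-degree part of $\mathrm{End}\bigl(\bigoplus V_i\otimes\mathbb{Q}[-i]\bigr)$), which is a nice touch. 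Your closing remark correctly identifies the one non-formal ingredient: that only closures of the strata $\Delta_q^0$ (for $p$-admissible $q$) can occur as supports, which you obtain from constructibility of $K$ with respect to $\{\Delta_q^0\}$.
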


\begin{proof}
	To start with, let us notice that
	\begin{equation*}
		\prescript{\mathfrak{p}}{}{\mathcal{H}}^{\alpha}(R \pi_{p*} \mathbb{Q}_{\tilde{\Delta}_{p}} [m_{p}]) \cong \prescript{\mathfrak{p}}{}{\mathcal{H}}^{- \alpha}(R \pi_{p*} \mathbb{Q}_{\tilde{\Delta}_{p}} [m_{p}]) \hspace{0.33em} \forall \alpha \geq 0
	\end{equation*}
	by virtue of the relative hard Lefschetz theorem \cite[Theorem 1.6.3]{dCaMi2009}. In particular, this implies that $D_{pq}^{\delta_{pq} - \alpha} \cong D_{pq}^{\delta_{pq} + \alpha}$ for any $\alpha \geq 0$.

	We are going to prove \textit{i)} and \textit{ii)} simultaneously by induction.

	\textit{Base step}.
	
	\begin{description}[leftmargin = 5mm, font=\normalfont\itshape]
		\item[i)]	\textit{Assume that $\Delta_{p}$ is minimal in the family of $\mathcal{S}$-varieties}; that is, if $\Delta_{q}$ is an $\mathcal{S}$-variety such that $\Delta_{q} \subseteq \Delta_{p}$, then $\Delta_{q} = \Delta_{p}$.

		$\Delta_{p} = \Delta_{p}^{0}$ is smooth, otherwise there would be a strictly smaller $\mathcal{S}$-variety contained in it. Consequently, $\pi_{p}$ is an isomorphism and
		\begin{equation*}
			R \pi_{p*} \mathbb{Q}_{\tilde{\Delta}_{p}} [m_{p}] \cong \mathbb{Q}_{\Delta_{p}^{0}} [m_{p}] \cong IC_{\Delta_{p}}^{\bullet}|_{\Delta_{p}^{0}} = IC_{\Delta_{p}}^{\bullet},
		\end{equation*}
		where $\mathbb{Q}_{\Delta_{p}^{0}} [m_{p}] \cong IC_{\Delta_{p}}^{\bullet}|_{\Delta_{p}^{0}}$ is \cite[Theorem p. 78, (a)]{GoMa1983}.
		\item[ii)]	\textit{Assume $q = p$}. We have $IC_{\Delta_{p}}^{\bullet}|_{\Delta_{p}^{0}} [- m_{p}] \cong \mathbb{Q}_{\Delta_{p}^{0}}$ by \cite[loc. cit.]{GoMa1983}.
	\end{description}
	
	Before we proceed to the inductive step, let us remind that the perverse cohomology sheaves $\prescript{\mathfrak{p}}{}{\mathcal{H}}^{\alpha}(R \pi_{p*} \mathbb{Q}_{\tilde{\Delta}_{p}} [m_{p}])$ admit a decomposition
	\begin{equation*}
		\prescript{\mathfrak{p}}{}{\mathcal{H}}^{\alpha}(R \pi_{p*} \mathbb{Q}_{\tilde{\Delta}_{p}} [m_{p}]) \cong \bigoplus_{\substack{q \geq p\\ p-adm.}} \prescript{\mathfrak{p}}{}{\mathcal{H}}^{\alpha}(R \pi_{p*} \mathbb{Q}_{\tilde{\Delta}_{p}} [m_{p}])_{\Delta_{q}},
	\end{equation*}
	where $\prescript{\mathfrak{p}}{}{\mathcal{H}}^{\alpha}(R \pi_{p*} \mathbb{Q}_{\tilde{\Delta}_{p}} [m_{p}])_{\Delta_{q}}$ denotes the $\Delta_{q}$-summand in the decomposition by supports \cite[\textsection 1.1]{dCa2013}. Then, in order to prove \textit{i)}, we have to show that, for any $p$-admissible $q$ and any $\alpha \in \mathbb{Z}$, the component supported on $\Delta_{q}^{0}$ is
	\begin{equation}\label{EqDeltaSummand}
		\prescript{\mathfrak{p}}{}{\mathcal{H}}^{\alpha}(R \pi_{p*} \mathbb{Q}_{\tilde{\Delta}_{p}} [m_{p}])_{\Delta_{q}^{0}} \cong D_{pq}^{\delta_{pq} + \alpha} \otimes R i_{pq*}^{0} \mathbb{Q}_{\Delta_{q}^{0}} [m_{q}].
	\end{equation}
	It suffices to prove isomorphism \eqref{EqDeltaSummand} for $\alpha \geq 0$ because of Hard Lefschetz theorem.
	
	\textit{Inductive step}.
	
	\begin{description}[leftmargin = 5mm, font=\normalfont\itshape]
		\item[i)] \textit{Let $q > p$ be $p$-admissible}. By inductive hypothesis, Formula \eqref{EqDeltaSummand} holds for every $p$-admissible $\tau$ such that $p \leq \tau < q$. Therefore, we have to prove that
		\begin{equation*}
			\prescript{\mathfrak{p}}{}{\mathcal{H}}^{\alpha}(R \pi_{p*} \mathbb{Q}_{\tilde{\Delta}_{p}} [m_{p}])_{\Delta_{q}^{0}} \cong D_{pq}^{\delta_{pq} + \alpha} \otimes R i_{pq*}^{0} \mathbb{Q}_{\Delta_{q}^{0}} [m_{q}].
		\end{equation*}
		The decomposition theorem gives (see \eqref{decThmintr} and \eqref{decimpr})
		\begin{align*}
			R \pi_{p*} \mathbb{Q}_{\tilde{\Delta}_{p}} [m_{p}]|_{\Delta_{q}^{0}} &\cong \bigoplus_{\alpha \in \mathbb{Z}} \prescript{\mathfrak{p}}{}{\mathcal{H}}^{\alpha}(R \pi_{p*} \mathbb{Q}_{\tilde{\Delta}_{p}} [m_{p}])|_{\Delta_{q}^{0}} [- \alpha]\\
			&\cong \bigoplus_{\alpha \in \mathbb{Z}} \prescript{\mathfrak{p}}{}{\mathcal{H}}^{\alpha}(R \pi_{p*} \mathbb{Q}_{\tilde{\Delta}_{p}} [m_{p}])_{\Delta_{q}^{0}} [- \alpha]\\
			&\qquad \bigoplus_{\alpha \in \mathbb{Z}} \prescript{\mathfrak{p}}{}{\mathcal{H}}^{\alpha}(R \pi_{p*} \mathbb{Q}_{\tilde{\Delta}_{p}} [m_{p}])_{\Delta_{p}}|_{\Delta_{q}^{0}} [- \alpha]\\
			&\qquad \bigoplus_{\alpha \in \mathbb{Z}} \bigoplus_{\substack{p < \tau < q\\ p-adm.}} \prescript{\mathfrak{p}}{}{\mathcal{H}}^{\alpha}(R \pi_{p*} \mathbb{Q}_{\tilde{\Delta}_{p}} [m_{p}])_{\Delta_{\tau}}|_{\Delta_{q}^{0}} [- \alpha].
		\end{align*}
		By inductive hypothesis, for any $p$-admissible $\tau$ with $p < \tau < q$,
		\begin{align*}
			\prescript{\mathfrak{p}}{}{\mathcal{H}}^{\alpha}(R \pi_{p*} \mathbb{Q}_{\tilde{\Delta}_{p}} [m_{p}])_{\Delta_{\tau}}|_{\Delta_{q}^{0}} &\cong D_{p \tau}^{\delta_{p \tau} + \alpha} \otimes R i_{p \tau*} IC_{\Delta_{\tau}}^{\bullet}|_{\Delta_{q}^{0}}\\
			&\cong D_{p \tau}^{\delta_{p \tau} + \alpha} \otimes i_{qq}^{0*} \circ i_{pq}^{*} \circ R i_{p \tau*} IC_{\Delta_{\tau}}^{\bullet}\\
			&\cong D_{p \tau}^{\delta_{p \tau} + \alpha} \otimes i_{qq}^{0*} \circ i_{\tau q}^{*} \circ i_{p \tau}^{*} \circ R i_{p \tau*} IC_{\Delta_{\tau}}^{\bullet}\\
			&\cong D_{p \tau}^{\delta_{p \tau} + \alpha} \otimes IC_{\Delta_{\tau}}^{\bullet}|_{\Delta_{q}^{0}}\\
			&\cong \bigoplus_{\beta \geq 0} D_{p \tau}^{\delta_{p \tau} + \alpha} \otimes \left( B_{\tau q}^{\beta} \otimes \mathbb{Q}_{\Delta_{q}^{0}} [m_{\tau} - \beta] \right),
		\end{align*}
		where we used functoriality and exactness of the pullback and the fact that $i_{p \tau}^{*} \circ i_{p \tau *} = 1$ \cite[p. 110]{Ive1986}. If we substitute this in the preceding isomorphism and combine it with Formula \eqref{EqFra01}, we obtain
		\begin{align*}
			\bigoplus_{\alpha = 0}^{2 k_{pq}} A_{pq}^{\alpha + m_{p}} \otimes \mathbb{Q}_{\Delta_{q}^{0}} [- \alpha] &\cong \bigoplus_{\alpha \in \mathbb{Z}} \prescript{\mathfrak{p}}{}{\mathcal{H}}^{\alpha}(R \pi_{p*} \mathbb{Q}_{\tilde{\Delta}_{p}} [m_{p}])_{\Delta_{q}^{0}} [- \alpha]\\
			&\qquad \bigoplus_{\alpha \in \mathbb{Z}} R^{\alpha}(IC_{\Delta_{p}}^{\bullet}|_{\Delta_{q}^{0}}) [- \alpha]\\
			&\qquad \bigoplus_{\substack{p < \tau < q\\ p-adm.}} \bigoplus_{\substack{\alpha \in \mathbb{Z}\\ \beta \geq 0}} D_{p \tau}^{\delta_{p \tau} + \alpha} \otimes B_{\tau q}^{\beta + m_{\tau}} \otimes \mathbb{Q}_{\Delta_{q}^{0}} [- \alpha - \beta].
		\end{align*}
		It follows that, for every fixed $\gamma \in \mathbb{Z}$,
		\begin{equation}\label{EqProofMainTh}
			\begin{split}
			A_{pq}^{\gamma + m_{p}} \otimes \mathbb{Q}_{\Delta_{q}^{0}} \cong &\prescript{\mathfrak{p}}{}{\mathcal{H}}^{\gamma + m_{q}}(R \pi_{p*} \mathbb{Q}_{\tilde{\Delta}_{p}} [m_{p}])_{\Delta_{q}^{0}}\\
			&\qquad \oplus R^{\gamma}(IC_{\Delta_{p}}^{\bullet}|_{\Delta_{q}^{0}})\\
			&\qquad \bigoplus_{\alpha + \beta = \gamma} \bigoplus_{\substack{p < \tau < q\\ p-adm.}} D_{p \tau}^{\delta_{p \tau} + \alpha} \otimes B_{\tau q}^{\beta + m_{\tau}} \otimes \mathbb{Q}_{\Delta_{q}^{0}}.
			\end{split}
		\end{equation}
		Remember that we want to prove Equation~\eqref{EqDeltaSummand} for non-negative exponents; that is, for any $\gamma \geq - m_{q}$. For such integers, $R^{\gamma}(IC_{\Delta_{p}}^{\bullet}|_{\Delta_{q}^{0}}) = 0$ because the intersection cohomology complexes satisfy support conditions \cite[\textsection 2.1]{dCaMi2009} and, as a consequence, isomorphism~\eqref{EqProofMainTh} becomes
		\begin{align*}
			A_{pq}^{\gamma + m_{p}} \otimes \mathbb{Q}_{\Delta_{q}^{0}} &\cong \prescript{\mathfrak{p}}{}{\mathcal{H}}^{\gamma + m_{q}}(R \pi_{p*} \mathbb{Q}_{\tilde{\Delta}_{p}} [m_{p}])_{\Delta_{q}^{0}}\\
			&\qquad \bigoplus_{\alpha + \beta = \gamma} \bigoplus_{\substack{p < \tau < q\\ p-adm.}} D_{p \tau}^{\delta_{p \tau} + \alpha} \otimes B_{\tau q}^{\beta + m_{\tau}} \otimes \mathbb{Q}_{\Delta_{q}^{0}}.
		\end{align*}
		In particular, it follows that $\prescript{\mathfrak{p}}{}{\mathcal{H}}^{\gamma + m_{q}}(R \pi_{p*} \mathbb{Q}_{\tilde{\Delta}_{p}} [m_{p}])_{\Delta_{q}^{0}}$ is a $\Delta_{q}^{0}$-trivial local system and, therefore, there are suitable vector spaces for which \eqref{EqDeltaSummand} holds.
		\item[ii)]	\textit{Let $\Delta_{q} \subset \Delta_{p}$ be an $\mathcal{S}$-variety}.

		If $\Delta_{q} = \Delta_{q^{p}}$, $\Delta_{q}$ is a $\Delta_{p}$-variety and Formula \eqref{EqProofMainTh} holds. Since \textit{i)} has been proved, it can be written as follows (notice that $\Delta_{q}^{0} = \Delta_{q^{p}}^{0}$ and $m_{q} = m_{\bar{q}}$):
		\begin{align*}
			A_{pq}^{\gamma + m_{p}} \otimes \mathbb{Q}_{\Delta_{q}^{0}} &\cong D_{pq}^{\delta_{pq} + \gamma + m_{q}} \otimes \mathbb{Q}_{\Delta_{q}^{0}} \oplus R^{\gamma}(IC_{\Delta_{p}}^{\bullet}|_{\Delta_{q}^{0}})\\
			&\qquad \bigoplus_{\alpha + \beta = \gamma} \bigoplus_{\substack{p < \tau < q\\ p-adm.}} D_{p \tau}^{\delta_{p \tau} + \alpha} \otimes B_{\tau q}^{\beta + m_{\tau}} \otimes \mathbb{Q}_{\Delta_{q}^{0}}.
		\end{align*}
		In particular, $R^{\gamma}(IC_{\Delta_{p}}^{\bullet}|_{\Delta_{q}^{0}})$ is a trivial local system on $\Delta_{q}^{0}$ and the assertion follows from \cite[Remark 1.5.1]{dCaMi2009}.

		If $\Delta_{q} \neq \Delta_{q^{p}}$, take $\Delta_{q^{p}}$, which is a $\Delta_{p}$-variety. $\Delta_{q}^{0}$ is strictly contained in $\Delta_{q^{p}}^{0}$, thus, from what has just been proved, it follows that
		\begin{equation*}
			IC_{\Delta_{p}}^{\bullet} [- m_{p}]|_{\Delta_{q}^{0}} \cong IC_{\Delta_{p}}^{\bullet} [- m_{p}]|_{\Delta_{q^{p}}^{0}} |_{\Delta_{q}^{0}} \cong \bigoplus_{\alpha \geq 0} B_{pq^{p}}^{\alpha} \otimes \mathbb{Q}_{\Delta_{q}^{0}} [- \alpha]. \qedhere
		\end{equation*}
	\end{description}
\end{proof}

It is worth, and also useful, pointing out that in the proof of Theorem~\ref{ThMain}~(ii) we found out the following result.

\begin{corollary}\label{CorBpq}
	Given two $\mathcal{S}$-varieties $\Delta_{q} \subseteq \Delta_{p}$, the vector spaces $B_{pq}^{\alpha}$ and $B_{pq^p}^{\alpha}$ appearing in the decompositions of $IC_{\Delta_{p}}^{\bullet} [- m_{p}]|_{\Delta_{q}^{0}}$ and $IC_{\Delta_{p}}^{\bullet}[- m_{p}]|_{\Delta_{q^{p}}^{0}}$, respectively, are the same.
\end{corollary}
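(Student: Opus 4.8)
The plan is to read the statement off directly from the computation already carried out in the proof of Theorem~\ref{ThMain}~(ii). Recall that, by that very theorem, $B_{pq}^{\alpha}$ and $B_{pq^{p}}^{\alpha}$ are the multiplicity vector spaces in the decompositions
\begin{equation*}
	IC_{\Delta_{p}}^{\bullet}[- m_{p}]|_{\Delta_{q}^{0}} \cong \bigoplus_{\alpha \geq 0} B_{pq}^{\alpha} \otimes \mathbb{Q}_{\Delta_{q}^{0}}[- \alpha], \qquad IC_{\Delta_{p}}^{\bullet}[- m_{p}]|_{\Delta_{q^{p}}^{0}} \cong \bigoplus_{\alpha \geq 0} B_{pq^{p}}^{\alpha} \otimes \mathbb{Q}_{\Delta_{q^{p}}^{0}}[- \alpha].
\end{equation*}
So the claim is just the assertion that, for each $\alpha$, these two families of vector spaces agree.

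First I would invoke the inclusion $\Delta_{q}^{0} \subseteq \Delta_{q^{p}}^{0}$ --- an equality when $\Delta_{q}$ is itself a $\Delta_{p}$-variety, and a strict inclusion otherwise --- which was established in the proof of Theorem~\ref{ThMain}~(ii). Restricting the second decomposition above to $\Delta_{q}^{0}$ along this inclusion, and using that the pullback of a constant sheaf is the constant sheaf, $\mathbb{Q}_{\Delta_{q^{p}}^{0}}|_{\Delta_{q}^{0}} \cong \mathbb{Q}_{\Delta_{q}^{0}}$, one obtains
\begin{equation*}
	IC_{\Delta_{p}}^{\bullet}[- m_{p}]|_{\Delta_{q}^{0}} \cong IC_{\Delta_{p}}^{\bullet}[- m_{p}]|_{\Delta_{q^{p}}^{0}}|_{\Delta_{q}^{0}} \cong \bigoplus_{\alpha \geq 0} B_{pq^{p}}^{\alpha} \otimes \mathbb{Q}_{\Delta_{q}^{0}}[- \alpha],
\end{equation*}
exactly as in the final line of that proof.

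It then remains to compare this with the first decomposition and conclude $B_{pq}^{\alpha} \cong B_{pq^{p}}^{\alpha}$ for every $\alpha$. This is immediate from the uniqueness of the multiplicity spaces of a split complex of the form $\bigoplus_{\beta \geq 0} V^{\beta} \otimes \mathbb{Q}_{\Delta_{q}^{0}}[- \beta]$: since $\Delta_{q}^{0}$ is a dense open subset of the irreducible variety $\Delta_{q}$, hence irreducible and in particular connected, its $\alpha$-th cohomology sheaf is the constant sheaf attached to $V^{\alpha}$, so taking the stalk at any point of $\Delta_{q}^{0}$ recovers $V^{\alpha}$ up to isomorphism; applying this to both presentations of $IC_{\Delta_{p}}^{\bullet}[- m_{p}]|_{\Delta_{q}^{0}}$ yields the claim. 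There is no real obstacle here: the corollary is a bookkeeping by-product of the proof of Theorem~\ref{ThMain}~(ii), the only point worth a sentence being the uniqueness of the multiplicity vector spaces just used.
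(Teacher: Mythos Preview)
Your argument is correct and follows exactly the route taken in the paper: the corollary is stated there as a by-product of the proof of Theorem~\ref{ThMain}~(ii), namely the restriction $IC_{\Delta_{p}}^{\bullet}[-m_{p}]|_{\Delta_{q}^{0}} \cong IC_{\Delta_{p}}^{\bullet}[-m_{p}]|_{\Delta_{q^{p}}^{0}}|_{\Delta_{q}^{0}} \cong \bigoplus_{\alpha} B_{pq^{p}}^{\alpha} \otimes \mathbb{Q}_{\Delta_{q}^{0}}[-\alpha]$ via the inclusion $\Delta_{q}^{0} \subseteq \Delta_{q^{p}}^{0}$. Your extra sentence making the uniqueness of the multiplicity spaces explicit (by passing to cohomology sheaves and stalks) is a welcome clarification of a step the paper leaves implicit.
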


In Theorem~\ref{ThMain}~(i), we took $\Delta_{q}$ to be a $\Delta_{p}$-variety because the resolution $\pi_{p}$ takes into account only the necessary conditions of $\Delta_{p}$; in other words, we restricted to the family of $\Delta_{p}$-varieties. Nevertheless, there is something we can say even when $\Delta_{q}$ is not a $\Delta_{p}$-variety.

\begin{proposition}[Enhancement of Formula \eqref{EqDeltaSummand}]\label{PropDpq}
	If $\Delta_{q} \subseteq \Delta_{p}$ are two $\mathcal{S}$-varieties,
	\begin{equation*}
		\prescript{\mathfrak{p}}{}{\mathcal{H}}^{\alpha}(R \pi_{p*} \mathbb{Q}_{\tilde{\Delta}_{p}} [m_{p}])_{\Delta_{q^{p}}}|_{\Delta_{q}^{0}} \cong D_{pq^{p}}^{\delta_{pq^{p}} + \alpha} \otimes \mathbb{Q}_{\Delta_{q}^{0}} [m_{q^{p}}].
	\end{equation*}
\end{proposition}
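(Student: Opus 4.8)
The plan is to reduce Proposition~\ref{PropDpq} to the already-established Equation~\eqref{EqDeltaSummand} of Theorem~\ref{ThMain}~(i) by exploiting the fact that $\Delta_{q^{p}}$ is, by construction, a $\Delta_{p}$-variety, and that $\Delta_{q}^{0} \subseteq \Delta_{q^{p}}^{0}$. First I would recall that the support-decomposition summand $\prescript{\mathfrak{p}}{}{\mathcal{H}}^{\alpha}(R\pi_{p*}\mathbb{Q}_{\tilde{\Delta}_{p}}[m_{p}])_{\Delta_{q^{p}}}$ is, by Theorem~\ref{ThMain}~(i) applied with the $p$-admissible index $q^{p}$, isomorphic to $D_{pq^{p}}^{\delta_{pq^{p}}+\alpha}\otimes Ri_{pq^{p}*}IC_{\Delta_{q^{p}}}^{\bullet}$; in particular, restricting to the open dense smooth locus $\Delta_{q^{p}}^{0}$ of $\Delta_{q^{p}}$ gives $D_{pq^{p}}^{\delta_{pq^{p}}+\alpha}\otimes \mathbb{Q}_{\Delta_{q^{p}}^{0}}[m_{q^{p}}]$, which is precisely \eqref{EqDeltaSummand} for the index $q^{p}$.

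Next I would restrict this isomorphism further along the locally closed inclusion $\Delta_{q}^{0}\hookrightarrow\Delta_{q^{p}}^{0}$. Since $IC_{\Delta_{q^{p}}}^{\bullet}|_{\Delta_{q^{p}}^{0}}\cong\mathbb{Q}_{\Delta_{q^{p}}^{0}}[m_{q^{p}}]$ by \cite[Theorem p.~78, (a)]{GoMa1983} (the normalization property used repeatedly above, e.g.\ in the base step of Theorem~\ref{ThMain}~(ii)), and restriction of a constant sheaf to a subvariety is again the constant sheaf, we get
\begin{equation*}
	\prescript{\mathfrak{p}}{}{\mathcal{H}}^{\alpha}(R\pi_{p*}\mathbb{Q}_{\tilde{\Delta}_{p}}[m_{p}])_{\Delta_{q^{p}}}|_{\Delta_{q}^{0}}\cong D_{pq^{p}}^{\delta_{pq^{p}}+\alpha}\otimes\mathbb{Q}_{\Delta_{q}^{0}}[m_{q^{p}}],
\end{equation*}
which is the claimed formula. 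The key point making the restriction legitimate is that the $\Delta_{q^{p}}$-summand in the support decomposition is genuinely supported on $\overline{\Delta_{q^{p}}^{0}}=\Delta_{q^{p}}$, so its restriction to the smaller stratum $\Delta_{q}^{0}$ only sees the (trivial) local system on the open part.

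One subtlety I would address carefully is that $\Delta_{q}^{0}$ is a stratum sitting inside $\Delta_{q^{p}}$ but not equal to its generic stratum $\Delta_{q^{p}}^{0}$ (this is exactly the situation flagged after the Notation introducing $q^{p}$, and illustrated in Example~\ref{ExaAdapt}); nonetheless, $\Delta_{q}^{0}$ is a locally closed \emph{smooth} subset of $\Delta_{q^{p}}^{0}$, hence restricting the trivial local system $\mathbb{Q}_{\Delta_{q^{p}}^{0}}[m_{q^{p}}]$ to it stays within the derived-category formalism and remains a shifted constant sheaf, with the shift $m_{q^{p}}$ unchanged. I expect the main (though mild) obstacle to be purely bookkeeping: keeping straight that the relevant dimension and shift are those of $\Delta_{q^{p}}$ (namely $m_{q^{p}}=m_{\bar q}$) rather than of $\Delta_{q}$, and that $D_{pq^{p}}^{\bullet}$ rather than $D_{pq}^{\bullet}$ appears, since $\Delta_{q}$ itself is not a $\Delta_{p}$-variety and the resolution $\pi_{p}$ only remembers the necessary conditions of $\Delta_{p}$. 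Given that both $\delta_{pq^{p}}$ and $m_{q^{p}}$ depend only on $q^{p}$, no additional identity is needed beyond the compatibility of restriction with the semisimple support decomposition, which is standard \cite[\textsection 1.1]{dCa2013}.
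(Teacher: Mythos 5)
Your proposal is correct and follows essentially the same route as the paper: the paper's proof likewise restricts Formula~\eqref{EqDeltaSummand}, applied to the $p$-admissible index $q^{p}$, in two stages through $\Delta_{q}^{0}\subseteq\Delta_{q^{p}}^{0}$, and concludes because the restriction of the shifted constant sheaf $\mathbb{Q}_{\Delta_{q^{p}}^{0}}[m_{q^{p}}]$ to $\Delta_{q}^{0}$ is $\mathbb{Q}_{\Delta_{q}^{0}}[m_{q^{p}}]$. Your additional remarks on the shift $m_{q^{p}}$ versus $m_{q}$ and on $D_{pq^{p}}$ versus $D_{pq}$ are consistent with the paper's bookkeeping.
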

\begin{proof}
	$\Delta_{q}^{0} \subseteq \Delta_{q^{p}}^{0}$, thus Formula \eqref{EqDeltaSummand} applied to $q^{p}$ provides
	\vskip1mm
	\begin{minipage}{0.5\linewidth}
		\begin{align*}
			&\prescript{\mathfrak{p}}{}{\mathcal{H}}^{\alpha}(R \pi_{p*} \mathbb{Q}_{\tilde{\Delta}_{p}} [m_{p}])_{\Delta_{q^{p}}}|_{\Delta_{q}^{0}}\\
			&\qquad= \prescript{\mathfrak{p}}{}{\mathcal{H}}^{\alpha}(R \pi_{p*} \mathbb{Q}_{\tilde{\Delta}_{p}} [m_{p}])_{\Delta_{q^{p}}}|_{\Delta_{q^{p}}^{0}} |_{\Delta_{q}^{0}}\\
			&\qquad \cong D_{pq^{p}}^{\delta_{pq^{p}} + \alpha} \otimes \mathbb{Q}_{\Delta_{q^{p}}^{0}} [m_{q^{p}}]|_{\Delta_{q}^{0}}\\
			&\qquad \cong D_{pq^{p}}^{\delta_{pq^{p}} + \alpha} \otimes \mathbb{Q}_{\Delta_{q}^{0}} [m_{q^{p}}].
		\end{align*}
	\end{minipage}
	\begin{minipage}{0.45\linewidth}
		\begin{equation*}
			\begin{tikzcd}
				& \Delta_{p} & \\
				\Delta_{q}^{0} \arrow[ur, hook] \arrow[r, hook] \arrow[dr, hook] & \Delta_{q} \arrow[u, hook, dashed] \arrow[r, hook] & \Delta_{q^{p}} \arrow[ul, hook] \\
				& \Delta_{q^{p}}^{0} \arrow[ur, hook] &
			\end{tikzcd}
		\end{equation*}
	\end{minipage}
	\vskip1mm
\end{proof}

\begin{notation}
	When $\Delta_{q}$ is not a $\Delta_{p}$-variety, the Proposition~\ref{PropDpq} allows us to put $D_{pq}^{\alpha} := D_{pq^{p}}^{\alpha}$.
\end{notation}

\subsection{Polynomial expressions}\label{SubsecPolExpr}

We are going to deal with a peculiar family of polynomials, which require a suitable notation.

Given a topological space $X$, the \textit{Poincaré polynomial of $X$} shall be denoted by
\begin{equation*}
	H_{X} := \sum_{\alpha \geq 0} \dim H^{\alpha}(X).
\end{equation*}
In particular, when $X = \mathbb{G}_{k}(\mathbb{C}^{l})$, it is known that \cite[\textsection 5.2]{ChGoMa1982}
\begin{equation*}
	H_{\mathbb{G}_{k}(\mathbb{C}^{l})} = \frac{P_{l}}{P_{k} P_{l - k}},
\end{equation*}
where
\begin{equation*}
	P_{\alpha} :=
	\begin{cases*}
		0 &\mbox{if $\alpha < 0$}\\
		1 &\mbox{if $\alpha = 0$}\\
		h_{0} \ldots h_{\alpha - 1} &\mbox{if $\alpha > 0$}
	\end{cases*}
	\quad \mbox{and} \quad h_{\beta} := \sum_{\alpha = 0}^{\beta} t^{2 \alpha} \hspace{0.33em} \forall \beta \in \mathbb{Z}.
\end{equation*}

Let $\Delta_{p}$ and $\Delta_{q}$ be two $\mathcal{S}$-varieties. Set
\begin{alignat*}{3}
	a_{pq} &:= \sum_{\alpha \in \mathbb{Z}} a_{pq}^{\alpha} t^{\alpha} = \sum_{\alpha \in \mathbb{Z}} \dim_{\mathbb{Q}} A_{pq}^{\alpha} t^{\alpha}, \qquad && f_{pq} &&:= \sum_{\alpha \in \mathbb{Z}} d_{pq}^{\alpha} t^{\alpha} = \sum_{\alpha \in \mathbb{Z}} \dim_{\mathbb{Q}} D_{pq}^{\alpha} t^{\alpha}\\
	g_{pq} &:= f_{pq} t^{2d_{pq}} = \sum_{\alpha \in \mathbb{Z}} \dim_{\mathbb{Q}} D_{pq}^{\alpha} t^{\alpha + 2d_{pq}}, \qquad && b_{pq} &&:= \sum_{\alpha \in \mathbb{Z}} b_{pq}^{\alpha} t^{\alpha} = \sum_{\alpha \in \mathbb{Z}} \dim_{\mathbb{Q}} B_{pq}^{\alpha} t^{\alpha}.
\end{alignat*}
The polynomials $f_{pq}$ (and, consequently, $g_{pq}$) are well defined because of Proposition~\ref{PropDpq}. On the other hand, Remark~\ref{RemFibres} and Corollary~\ref{CorBpq} guarantee that, if $q$ is not $p$-admissible, $a_{pq} = a_{p q^{p}}$ and $b_{pq} = b_{pq^{p}}$, respectively. Furthermore, the polynomials $b_{pq}$ are (some) Kazhdan-Lusztig polynomials (see \cite[Ch. 6]{BiLa2000}, \cite[\textsection 4.4]{dCaMi2009}).

\begin{remark}[Multiplicities $n_{hk}$]\label{RemMultiplicities}
	In the introduction we stated in Formula \eqref{decimpr} that $R \pi_{*} \mathbb{Q}_{\tilde{\mathcal{S}}} \cong \oplus_{h, k} IC_{\Delta_{h}}^{\bullet} [-k]^{\oplus n_{hk}}$. We are now able to be more accurate: if we combine decomposition theorem~\ref{ThDec} with Theorem~\ref{ThMain} we obtain
	\begin{align*}
		R \pi_{0*} \mathbb{Q}_{\tilde{\mathcal{S}}} &\cong \bigoplus_{\substack{\alpha \in \mathbb{Z}\\ adm. \hspace{0.165em} q}} D_{0q}^{\delta_{0q} + \alpha} \otimes R i_{0q*} IC_{\Delta_{q}}^{\bullet} [- \dim \mathcal{S} - \alpha]\\
		&\cong \bigoplus_{\substack{\alpha \in \mathbb{Z}\\ adm. \hspace{0.165em} q}} R i_{0q*} IC_{\Delta_{q}}^{\bullet} [- \dim \mathcal{S} - \alpha]^{\oplus d_{0q}^{\alpha}}.
	\end{align*}
	As a consequence, in Formula \eqref{decimpr} we have $h \in \{ q: \Delta_{q} \mbox{ is an } \mathcal{S}\mbox{-variety} \}$, $k = \dim \mathcal{S} + \alpha$ and $n_{hk} = d_{0h}^{\alpha}$, up to identifying $IC_{\Delta_{q}}^{\bullet}$ with its (derived) direct image under the inclusion $i_{0h}$.
\end{remark}

Among the above polynomials, the ones which are \textit{always explicit} are the $a_{pq}$; namely,

\begin{proposition}\label{Prop_apq}
	\begin{equation*}
		a_{pq} = \frac{P_{i_{1}^{\bar{p}} + q_{1}^{p}}}{P_{i_{1}^{\bar{p}} + \bar{p}_{1}} P_{q_{1}^{p} - \bar{p}_{1}}} \cdot \prod_{\alpha = 2}^{\omega_{\bar{p}}} \frac{P_{i_{\alpha}^{\bar{p}} + q_{\alpha}^{p} - i_{\alpha - 1}^{\bar{p}} - \bar{p}_{\alpha - 1}}}{P_{i_{\alpha}^{\bar{p}} + \bar{p}_{\alpha} - i_{\alpha - 1}^{\bar{p}} - \bar{p}_{\alpha - 1}} P_{q_{\alpha}^{p} - \bar{p}_{\alpha}}}.
	\end{equation*}
\end{proposition}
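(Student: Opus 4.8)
The plan is to compute $a_{pq}$ directly from the description of the fibre $F_{pq}$ already obtained in the text, since $a_{pq}$ is by definition the Poincaré polynomial of $F_{pq}$. Recall that, for $\Delta_q$ a $\Delta_p$-variety, we have shown
\begin{equation*}
	F_{pq} \cong \mathbb{F}(i_{1}^{\bar{p}} + \bar{p}_{1}, \ldots, i_{\omega_{\bar{p}}}^{\bar{p}} + \bar{p}_{\omega_{\bar{p}}}; \mathbb{C}^{i_{1}^{\bar{p}} + q_{1}^{p}}, \ldots, \mathbb{C}^{i_{\omega_{\bar{p}}}^{\bar{p}} + q_{\omega_{\bar{p}}}^{p}}),
\end{equation*}
so everything reduces to knowing the Poincaré polynomial of a partial flag variety $\mathbb{F}(k_1,\dots,k_n;H_1,\dots,H_n)$. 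The natural approach is to exploit the iterated Grassmannian-bundle structure of such a flag variety exhibited in the proof of Proposition~\ref{PropFlagSmooth}: each $\mathbb{F}(k_1,\dots,k_\alpha;H_1,\dots,H_\alpha)$ is a Grassmannian $(k_\alpha-k_{\alpha-1})$-plane bundle of a rank-$(\dim H_\alpha - k_{\alpha-1})$ vector bundle over $\mathbb{F}(k_1,\dots,k_{\alpha-1};H_1,\dots,H_{\alpha-1})$.

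First I would invoke the Leray--Hirsch theorem (available here since Grassmannian bundles satisfy its hypotheses: the cohomology of the fibre is generated by Chern classes of the tautological subbundle, which extend globally). This gives, multiplicatively, that the Poincaré polynomial of the total space is the product of the Poincaré polynomial of the base and that of the fibre $\mathbb{G}_{k_\alpha-k_{\alpha-1}}(\mathbb{C}^{\dim H_\alpha - k_{\alpha-1}})$. Iterating down the tower from $\alpha=n$ to the base $\mathbb{G}_{k_1}(H_1)$, and using the stated formula $H_{\mathbb{G}_k(\mathbb{C}^l)} = P_l/(P_k P_{l-k})$, one gets
\begin{equation*}
	H_{\mathbb{F}(k_1,\dots,k_n;H_1,\dots,H_n)} = \frac{P_{\dim H_1}}{P_{k_1}P_{\dim H_1 - k_1}}\cdot \prod_{\alpha=2}^{n}\frac{P_{\dim H_\alpha - k_{\alpha-1}}}{P_{k_\alpha - k_{\alpha-1}}P_{\dim H_\alpha - k_\alpha}}.
\end{equation*}
Then I would substitute $n = \omega_{\bar p}$, $k_\alpha = i_\alpha^{\bar p}+\bar p_\alpha$ and $\dim H_\alpha = i_\alpha^{\bar p}+q_\alpha^p$. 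Each factor then simplifies: $\dim H_\alpha - k_{\alpha-1} = i_\alpha^{\bar p}+q_\alpha^p - i_{\alpha-1}^{\bar p}-\bar p_{\alpha-1}$, $k_\alpha-k_{\alpha-1} = i_\alpha^{\bar p}+\bar p_\alpha - i_{\alpha-1}^{\bar p}-\bar p_{\alpha-1}$, and $\dim H_\alpha - k_\alpha = q_\alpha^p - \bar p_\alpha$, which reproduces exactly the claimed formula. Finally, the cases where $q$ is not $p$-admissible follow since by Remark~\ref{RemFibres} $a_{pq}=a_{pq^p}$, and $q^p$ is $p$-admissible by construction, so the formula (written in terms of $q^p$, which equals $q^p$ for a $\Delta_p$-variety) still applies verbatim.

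The main obstacle, such as it is, is justifying the use of Leray--Hirsch for the Grassmannian bundles — i.e.\ checking that the restriction map on cohomology is surjective onto the fibre cohomology — but this is standard for Grassmannian bundles (the Chern classes of the relative tautological bundle restrict to generators of the fibre's cohomology ring), and the cartesian-square / base-change bookkeeping that identifies $F_{pq}$ with the stated flag variety has already been carried out in the text preceding the statement. So the proof is essentially a clean multiplicative computation once the bundle tower is in hand; I would keep it short, citing Leray--Hirsch and the displayed formula for $H_{\mathbb{G}_k(\mathbb{C}^l)}$, and then just perform the index substitution.
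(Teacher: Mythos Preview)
Your proposal is correct and follows essentially the same approach as the paper: both use the iterated Grassmannian-bundle tower for $F_{pq}$ from Proposition~\ref{PropFlagSmooth}, apply Leray--Hirsch to factor the Poincar\'e polynomial as a product of Grassmannian Poincar\'e polynomials, and then substitute the indices $k_\alpha = i_\alpha^{\bar p}+\bar p_\alpha$, $\dim H_\alpha = i_\alpha^{\bar p}+q_\alpha^p$. The paper's proof is slightly terser (it just writes out the tensor decomposition of $H^\bullet(F_{pq})$ and says ``the assertion follows by taking Poincar\'e polynomials''), but the content is the same.
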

\begin{proof}
	Recall that $a_{pq} = \sum_{\alpha} \dim H^{\alpha}(F_{pq}) = H_{F_{pq}}$. In order to make notations simpler, put, for any $\alpha \in \{ 1, \ldots, \omega_{\bar{p}} \}$,
	\begin{equation*}
		\mathbb{F}^{\alpha} := \mathbb{F}(i_{1}^{\bar{p}} + \bar{p}_{1}, \ldots, i_{\alpha}^{\bar{p}} + \bar{p}_{\alpha}; \mathbb{C}^{i_{1}^{\bar{p}} + q_{1}^{p}}, \ldots, \mathbb{C}^{i_{\alpha}^{\bar{p}} + q_{\alpha}^{p}}).
	\end{equation*}
	For any $\alpha > 1$, the projection $\mathbb{F}^{\alpha} \rightarrow \mathbb{F}^{\alpha - 1}$ is a fibration with fibres
	\begin{equation*}
	\mathbb{G}_{i_{\alpha}^{\bar{p}} + \bar{p}_{\alpha} - i_{\alpha - 1}^{\bar{p}} - \bar{p}_{\alpha - 1}}(\mathbb{C}^{i_{\alpha}^{\bar{p}} + q_{\alpha}^{p} - i_{\alpha - 1}^{\bar{p}} - \bar{p}_{\alpha - 1}}).
	\end{equation*}
	By Leray-Hirsch theorem,
	\begin{align*}
		H^{\bullet}(F_{pq}) &\cong H^{\bullet}(\mathbb{G}_{i_{1}^{\bar{p}} + \bar{p}_{1}}(\mathbb{C}^{i_{1}^{\bar{p}} + q_{1}^{p}})) \otimes H^{\bullet}(\mathbb{G}_{i_{2}^{\bar{p}} + \bar{p}_{2} - i_{1}^{\bar{p}} - \bar{p}_{1}}(\mathbb{C}^{i_{2}^{\bar{p}} + q_{2}^{\bar{p}} - i_{1}^{\bar{p}} - \bar{p}_{1}}))\\
		&\qquad \otimes \ldots \otimes H^{\bullet}(\mathbb{G}_{i_{\omega_{\bar{p}}}^{\bar{p}} + \bar{p}_{\omega_{\bar{p}}} - i_{\omega_{\bar{p}} - 1}^{\bar{p}} - \bar{p}_{\omega_{\bar{p}} - 1}}(\mathbb{C}^{i_{\omega_{\bar{p}}}^{\bar{p}} + q_{\omega_{\bar{p}}}^{\bar{p}} - i_{\omega_{\bar{p}} - 1}^{\bar{p}} - \bar{p}_{\omega_{\bar{p}} - 1}})).
	\end{align*}
	The assertion follows by taking the Poincaré polynomials.
\end{proof}

We are now going to exhibit the existence of a family of (Poincaré) polynomial expressions by means of Theorem~\ref{ThMain}.

\begin{corollary}\label{CorPolExp}
	Suppose that $\Delta_{q}$ is a $\Delta_{p}$-variety. If $\Delta_{p} = \Delta_{q}$,
	\begin{equation*}
		a_{pp} = g_{pp} = b_{pp} = 1,
	\end{equation*}
	otherwise
	\begin{align*}
		a_{pq} = b_{pq} + g_{pq} + \sum_{\substack{p < \tau < q\\ p-adm.}} g_{p \tau} b_{\tau q}.
	\end{align*}
\end{corollary}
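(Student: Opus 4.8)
The plan is to read off the identity by passing to Poincaré polynomials in the sheaf-level isomorphism \eqref{EqProofMainTh} produced in the proof of Theorem~\ref{ThMain}. First I would settle the degenerate case $\Delta_{p}=\Delta_{q}$ directly: over the dense open set $\Delta_{p}^{0}$ the map $\pi_{p}$ is an isomorphism, so the fibre $F_{pp}$ is a point (hence $a_{pp}=H_{\mathrm{pt}}=1$) and $IC_{\Delta_{p}}^{\bullet}[-m_{p}]|_{\Delta_{p}^{0}}\cong\mathbb{Q}_{\Delta_{p}^{0}}$ by the normalization of the intersection complex, so $b_{pp}=1$; moreover $R\pi_{p*}\mathbb{Q}_{\tilde{\Delta}_{p}}[m_{p}]|_{\Delta_{p}^{0}}\cong\mathbb{Q}_{\Delta_{p}^{0}}[m_{p}]$ together with \eqref{EqDeltaSummand} and the equalities $k_{pp}=d_{pp}=\delta_{pp}=0$ forces $D_{pp}^{0}\cong\mathbb{Q}$ and $D_{pp}^{\alpha}=0$ otherwise, whence $g_{pp}=f_{pp}t^{2d_{pp}}=1$.

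Now assume $\Delta_{q}\subsetneq\Delta_{p}$ with $\Delta_{q}$ a $\Delta_{p}$-variety, so that $q>p$, $\Delta_{q^{p}}=\Delta_{\bar{q}}$, $m_{q}=m_{\bar{q}}$, and \eqref{EqProofMainTh} applies verbatim. For every $\gamma\in\mathbb{Z}$ that isomorphism identifies $A_{pq}^{\gamma+m_{p}}\otimes\mathbb{Q}_{\Delta_{q}^{0}}$ with the direct sum of three pieces: the $\Delta_{q}$-perverse summand, which by \eqref{EqDeltaSummand} has rank $\dim D_{pq}^{\delta_{pq}+\gamma+m_{q}}$; the sheaf $R^{\gamma}(IC_{\Delta_{p}}^{\bullet}|_{\Delta_{q}^{0}})$, which by Theorem~\ref{ThMain}(ii) is the trivial local system of rank $b_{pq}^{\gamma+m_{p}}$ (note that, in contrast with the range $\gamma\geq -m_{q}$ used inside the proof of Theorem~\ref{ThMain}(i), here all $\gamma$ occur, and it is precisely the complementary range that feeds the $b_{pq}$ term); and the contributions $\bigoplus_{\alpha+\beta=\gamma}\bigoplus_{p<\tau<q,\ p\text{-adm.}}D_{p\tau}^{\delta_{p\tau}+\alpha}\otimes B_{\tau q}^{\beta+m_{\tau}}$. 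I would then take $\mathbb{Q}$-dimensions of the $\gamma$-th identity, multiply by $t^{\gamma+m_{p}}$, and sum over $\gamma\in\mathbb{Z}$.

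What remains is routine bookkeeping with the definitions $d_{pq}=m_{p}-m_{q}-k_{pq}$, $\delta_{pq}=k_{pq}-d_{pq}$ and $g_{pq}=f_{pq}t^{2d_{pq}}$. The left-hand side gives $\sum_{\gamma}a_{pq}^{\gamma+m_{p}}t^{\gamma+m_{p}}=a_{pq}$. Reindexing the first term by $k=\delta_{pq}+\gamma+m_{q}$ turns the exponent into $\gamma+m_{p}=k+2d_{pq}$ (using $-\delta_{pq}+m_{p}-m_{q}=2d_{pq}$), so that term sums to $g_{pq}$; the second term sums to $b_{pq}$; and for each admissible $\tau$ with $p<\tau<q$, reindexing by $k=\delta_{p\tau}+\alpha$ and $n=\beta+m_{\tau}$ turns the exponent into $\alpha+\beta+m_{p}=k+n+2d_{p\tau}$, so the $\tau$-contribution factors as $g_{p\tau}b_{\tau q}$. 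Adding up yields $a_{pq}=b_{pq}+g_{pq}+\sum_{p<\tau<q,\ p\text{-adm.}}g_{p\tau}b_{\tau q}$.

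There is no genuine obstacle here — the substantive work is Theorem~\ref{ThMain} — and the only delicate point is the index bookkeeping: keeping track of how the single normalization shift $[m_{p}]$ on the left-hand side of \eqref{EqProofMainTh} is responsible for the twists $t^{2d_{pq}}$ (resp.\ $t^{2d_{p\tau}}$) that distinguish $g$ from $f$, and checking that the term $R^{\gamma}(IC_{\Delta_{p}}^{\bullet}|_{\Delta_{q}^{0}})$, which was killed by the support conditions in the proof of Theorem~\ref{ThMain}(i), now reappears over the remaining degrees to produce exactly the Kazhdan-Lusztig contribution $b_{pq}$.
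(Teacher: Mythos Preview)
Your argument is correct and follows essentially the same route as the paper: both deduce the identity by taking ranks in the degree-by-degree sheaf isomorphism obtained from Theorem~\ref{ThMain} and then summing the resulting numerical identities into generating functions, with the index bookkeeping $m_{p}-m_{\tau}-\delta_{p\tau}=2d_{p\tau}$ producing the twist from $f$ to $g$. The only cosmetic difference is that you quote \eqref{EqProofMainTh} directly (where the $\tau=p$ and $\tau=q$ contributions are already singled out as the $IC$-term and the perverse summand), whereas the paper restarts from the statement of Theorem~\ref{ThMain} together with \eqref{EqFra02} and separates the boundary cases $\tau=p$, $\tau=q$ by hand; the two derivations are interchangeable.
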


\begin{proof}
	Assume $p \neq q$. In Theorem~\ref{ThMain} we proved that
	\begin{equation*}
		\prescript{\mathfrak{p}}{}{\mathcal{H}}^{\alpha} (R \pi_{p*} \mathbb{Q}_{\tilde{\Delta}_{p}} [m_{p}] \mid_{\Delta_{q}^{0}}) \cong \bigoplus_{\substack{p \leq \tau \leq q\\ p-adm.}} D_{p \tau}^{\delta_{p \tau} + \alpha} \otimes R i_{p \tau*} IC_{\Delta_{\tau}}^{\bullet}\mid_{\Delta_{q}^{0}}
	\end{equation*}
and
	\begin{equation*}
		IC_{\Delta_{\tau}}^{\bullet}|_{\Delta_{q}^{0}} \cong \bigoplus_{\beta \in \mathbb{Z}} B_{\tau q}^{\beta} \otimes \mathbb{Q}_{\Delta_{q}^{0}} [m_{\tau} - \beta] \cong \bigoplus_{\beta \in \mathbb{Z}} B_{\tau q}^{\beta + m_{\tau}} \otimes \mathbb{Q}_{\Delta_{q}^{0}} [- \beta].
	\end{equation*}
	Combining these results with Formula \eqref{EqFra02}, we obtain
	\begin{align*}
		&\bigoplus_{\alpha \in \mathbb{Z}} A_{pq}^{\alpha + m_{p} - m_{q}} \otimes \mathbb{Q}_{\Delta_{q}^{0}} [m_{q} - \alpha]\\
		&\qquad \cong \bigoplus_{\alpha \in \mathbb{Z}} \left( \bigoplus_{\substack{p \leq \tau \leq q\\ p-adm.}} D_{p \tau}^{\delta_{p \tau} + \alpha} \otimes \bigoplus_{\beta \in \mathbb{Z}} B_{\tau q}^{\beta + m_{\tau}} \otimes \mathbb{Q}_{\Delta_{q}^{0}} [- \beta] \right) [- \alpha],
	\end{align*}
	which can be written as
	\begin{equation*}
		\bigoplus_{\alpha \geq - m_{p}} A_{pq}^{\alpha + m_{p}} \otimes \mathbb{Q}_{\Delta_{q}^{0}} [- \alpha] \cong \bigoplus_{\alpha, \beta \geq - m_{p}} \bigoplus_{\substack{p \leq \tau \leq q\\ p-adm.}} D_{p \tau}^{\delta_{p \tau} + \alpha} \otimes B_{\tau q}^{\beta + m_{\tau}} \otimes \mathbb{Q}_{\Delta_{q}^{0}} [- \alpha - \beta].
	\end{equation*}
	For any $\gamma \geq - m_{p}$,
	\begin{equation}\label{EqCorMain}
		A_{pq}^{\gamma + m_{p}} \otimes \mathbb{Q}_{\Delta_{q}^{0}} \cong \bigoplus_{\alpha + \beta = \gamma} \bigoplus_{\substack{p \leq \tau \leq q\\ p-adm.}} D_{p \tau}^{\delta_{p \tau} + \alpha} \otimes B_{\tau q}^{\beta + m_{\tau}} \otimes \mathbb{Q}_{\Delta_{q}^{0}}.
	\end{equation}
	Let us see what happens for $\tau = p$ and $\tau = q$. When $\tau = p$, $\delta_{pp} = 0$ and
	\begin{equation*}
		D_{pp}^{\alpha} = \begin{cases*} \mathbb{Q} &\mbox{if $\alpha = 0$,}\\ 0 &\mbox{otherwise.} \end{cases*}
	\end{equation*}
	Therefore
	\begin{equation*}
		\bigoplus_{\alpha + \beta = \gamma} D_{pp}^{\delta_{pp} + \alpha} \otimes B_{pq}^{\beta + m_{p}} \otimes \mathbb{Q}_{\Delta_{q}^{0}} \cong B_{pq}^{\gamma + m_{p}} \otimes \mathbb{Q}_{\Delta_{q}^{0}}.
	\end{equation*}
	On the other hand, when $\tau = q$,
	\begin{equation*}
		B_{qq}^{\beta + m_{q}} = \begin{cases*} \mathbb{Q} &\mbox{if $\beta = - m_{q}$,}\\ 0 &\mbox{otherwise.} \end{cases*}
	\end{equation*}
	Therefore
	\begin{equation*}
		\bigoplus_{\alpha + \beta = \gamma} D_{pq}^{\delta_{pq} + \alpha} \otimes B_{qq}^{\beta + m_{q}} \otimes \mathbb{Q}_{\Delta_{q}^{0}} \cong D_{pq}^{\delta_{pq} + \gamma + m_{q}} \otimes \mathbb{Q}_{\Delta_{q}^{0}}.
	\end{equation*}
	Taking into account these facts, isomorphism \eqref{EqCorMain} becomes
	\begin{align*}
		A_{pq}^{\gamma + m_{p}} \otimes \mathbb{Q}_{\Delta_{q}^{0}} &\cong B_{pq}^{\gamma + m_{p}} \otimes \mathbb{Q}_{\Delta_{q}^{0}} \oplus D_{pq}^{\delta_{pq} + \gamma + m_{q}} \otimes \mathbb{Q}_{\Delta_{q}^{0}}\\
		&\qquad \bigoplus_{\alpha + \beta = \gamma} \bigoplus_{\substack{p < \tau < q\\ p-adm.}} D_{p \tau}^{\delta_{p \tau} + \alpha} \otimes B_{\tau q}^{\beta + m_{\tau}} \otimes \mathbb{Q}_{\Delta_{q}^{0}}.
	\end{align*}
	Let us change notations: put $s = \gamma + m_{p} \hspace{1mm} (\geq 0)$ and use the equality $m_{p} - m_{\tau} - \delta_{p \tau} = 2d_{\bar{p} \tau}$ so as to have
	\begin{align*}
		A_{pq}^{s} \otimes \mathbb{Q}_{\Delta_{q}^{0}} &\cong B_{pq}^{s} \otimes \mathbb{Q}_{\Delta_{q}^{0}} \oplus D_{pq}^{s - 2d_{pq}} \otimes \mathbb{Q}_{\Delta_{q}^{0}}\\
		&\qquad \bigoplus_{\alpha + \beta = s} \bigoplus_{\substack{p < \tau < q\\ p-adm.}} D_{p \tau}^{\alpha - 2d_{\bar{p} \tau}} \otimes B_{\tau q}^{\beta} \otimes \mathbb{Q}_{\Delta_{q}^{0}}.
	\end{align*}
	From this formula we infer, for any $s \geq 0$,
	\begin{equation*}
		a_{pq}^{s} = b_{pq}^{s} + d_{pq}^{s - 2d_{pq}} + \sum_{\alpha + \beta = s} \sum_{\substack{p < \tau < q\\ p-adm.}} d_{\bar{p} \tau}^{\alpha - 2d_{\bar{p} \tau}} b_{\tau q}^{\beta},
	\end{equation*}
	where
	\begin{equation*}
		a_{pq}^{s} = \dim_{\mathbb{Q}} A_{pq}^{s}, \qquad b_{pq}^{s} = \dim_{\mathbb{Q}} B_{pq}^{s}, \qquad d_{pq}^{s} = \dim_{\mathbb{Q}} D_{pq}^{s}.
	\end{equation*}
	If we formally multiply both sides by $t^{s}$ and take the sum over $s$, we obtain the polynomial expression
	\begin{align*}
		a_{pq} &= \sum_{s \geq 0} a_{pq}^{s} t^{s} = \sum_{s \geq 0} b_{pq}^{s} t^{s} + \sum_{s \geq 0} (d_{pq}^{s - 2d_{pq}} t^{s - 2d_{pq}}) t^{2d_{pq}}\\
		&\qquad + \sum_{\substack{p < \tau < q\\ p-adm.}} \left( \sum_{\alpha \geq 0} d_{\bar{p} \tau}^{\alpha - 2d_{\bar{p} \tau}} t^{\alpha - 2d_{\bar{p} \tau}} \right) \left( \sum_{\beta \geq 0} b_{\tau q}^{\beta} t^{\beta} \right) t^{2d_{\bar{p} \tau}}\\
		&= b_{pq} + f_{pq} t^{2d_{pq}} + \sum_{\substack{p < \tau < q\\ p-adm.}} f_{p \tau} b_{\tau q} t^{2d_{\bar{p} \tau}} = b_{pq} + g_{pq} + \sum_{\substack{p < \tau < q\\ p-adm.}} g_{p \tau} b_{\tau q},
	\end{align*}
	where
	\begin{equation*}
		a_{pq} = \sum_{s \geq 0} a_{pq}^{s} t^{s}, \qquad b_{pq} = \sum_{s \geq 0} b_{pq}^{s} t^{s} \qquad f_{pq} = \sum_{s \geq 0} d_{pq}^{s} t^{s}, \qquad g_{pq} = f_{pq} t^{2d_{pq}}.
	\end{equation*}
	Now, consider the case $p = q$. We have $a_{pp} = 1$ either by Proposition~\ref{Prop_apq} or by the fact that $\pi_{p}: \pi_{p}^{-1}(\Delta_{p}^{0}) \rightarrow \Delta_{p}^{0}$ is an isomorphism. Moreover, $g_{pp} = 1$ being $D_{pp}^{\alpha} = \mathbb{Q}$ for $\alpha = 0$ and 0 otherwise (as we saw in the preceding case). Lastly, $b_{pp} = 1$, as well, because all but the first coefficients of the Kazhdan-Lusztig polynomial are 0, being $IC_{\Delta_{p}}^{\bullet}|_{\Delta_{p}^{0}} [-m_{p}] \cong \mathbb{Q}_{\Delta_{p}^{0}}$.
\end{proof}

\begin{remark}
	In Corollary~\ref{CorPolExp} it is not possible to get rid of the hypothesis of $p$-admissibility. Having said that, the above formula works for all $\mathcal{S}$-varieties in the sense that it is legitimate to replace $\Delta_{q}$ with $\Delta_{q^{p}}$.
\end{remark}

\section{Computation of certain Poincaré polynomials}\label{SecKaLu}

In this final section, we are going to use the theoretic results seen up to now so as to obtain an iterative algorithm, which we shall often refer to as \textit{KaLu}, for the computation of the polynomials $g_{pq}$ and $b_{pq}$ (see Section~\ref{SubsecAlgo}). Classes of polynomial identities, which can be used to test \textit{KaLu}, are exhibited in Section~\ref{SubsecTests}. In the end, in Section~\ref{SubsecRelevant}, we see that not all $\mathcal{S}$-varieties contribute to decomposition theorem.

\subsection{KaLu, the iterative algorithm}\label{SubsecAlgo}

\textit{From now on, all $\mathcal{S}$-varieties are supposed to be described with respect to the flag $\mathcal{F}$ of $\mathcal{S}$, unless otherwise stated.}

In Corollary~\ref{CorPolExp} we proved the existence of some classes of polynomial expressions
\begin{equation}\label{EqGpqBpq}
	g_{pq} + b_{pq} = R_{pq},
\end{equation}
where
\begin{equation}\label{EqRpq}
	R_{pq} = a_{pq} - \sum_{\substack{p < \tau < q\\ p-adm.}} g_{p \tau} b_{\tau q}.
\end{equation}
The fact that the Poincaré polynomials $a_{pq}$ are explicit (see Proposition~\ref{Prop_apq}) is fundamental in the achievement of \textit{KaLu}, shown below, for the computation of the polynomials $g_{pq}$ and $b_{pq}$. By the way, let us introduce the following functions:
\begin{align*}
	U_{\beta}&: \sum_{\alpha \geq 0} c_{\alpha} t^{\alpha} \in \mathbb{Z} \left[ t \right] \mapsto \sum_{\alpha \geq \beta} c_{\alpha} t^{\alpha} \in \mathbb{Z} \left[ t \right] \mbox{0.33em} \forall \beta \geq 0,\\
	S&: \sum_{\alpha \geq 0} c_{\alpha} t^{\alpha} \in \mathbb{Z} \left[ t \right] \mapsto c_{0} + \sum_{\alpha \geq 1} c_{\alpha} (t^{\alpha} + t^{- \alpha}) \in \mathbb{Z} \left[ t, t^{-1} \right]\\
	\tilde{t}^{\beta}&: \sum_{\alpha \geq 0} c_{\alpha} t^{\alpha} \in \mathbb{Z} \left[ t \right] \mapsto \sum_{\alpha \geq 0} c_{\alpha} t^{\alpha + \beta} \in \mathbb{Z} \left[ t \right] \hspace{0.33em} \forall \beta \geq 0.
\end{align*}

\begin{corollary}\label{cor:Utilde}
	If $\sum_{\alpha = 1}^{\omega} (q_{\alpha} - p_{\alpha}) \geq 1$,
	\begin{equation*}
		\begin{cases*}
			g_{pq} = \tilde{U}_{pq}(R_{pq})\\
			b_{pq} = R_{pq} - g_{pq}
		\end{cases*}
	\end{equation*}
	where $\tilde{U}_{pq} := \tilde{t}^{m_{pq}} \circ S \circ \tilde{t}^{- m_{pq}} \circ U_{m_{pq}}$ and $m_{pq} := m_{p} - m_{q}$.
\end{corollary}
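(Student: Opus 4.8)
The plan is to extract the two polynomials $g_{pq}$ and $b_{pq}$ from the single identity \eqref{EqGpqBpq} by exploiting the structural differences between them. First I would recall what we know \emph{a priori} about each summand. The polynomial $b_{pq}$ is a Kazhdan--Lusztig polynomial: by the support conditions for intersection cohomology complexes \cite[\textsection 2.1]{dCaMi2009}, the stalk $\mathcal H^{\alpha}(IC^{\bullet}_{\Delta_p}[-m_p])_{|\Delta_q^0}$ vanishes unless $0\le \alpha < m_p - m_q = m_{pq}$ (strict inequality, since $\Delta_q \subsetneq \Delta_p$), so $b_{pq}$ is a polynomial of degree $< m_{pq}$ with zero constant term contribution only at the bottom, i.e.\ $b_{pq} \in \mathbb Z[t]$ has $\deg b_{pq} < m_{pq}$. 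On the other hand $g_{pq} = f_{pq} t^{2d_{pq}}$, and by the relative Hard Lefschetz symmetry established in Theorem~\ref{ThMain}~(i) (the relation $D_{pq}^{\delta_{pq}-\alpha}\cong D_{pq}^{\delta_{pq}+\alpha}$), the polynomial $f_{pq}$ is \emph{palindromic} about $\delta_{pq}$; multiplying by $t^{2d_{pq}}$ and using $\delta_{pq}+2d_{pq}=k_{pq}+d_{pq}=m_{pq}$ shows $g_{pq}$ is palindromic about $m_{pq}$, with lowest-degree term of degree $2d_{pq}$ and highest of degree $2k_{pq}$, both $\ge$ and $\le$ ... in particular every nonzero coefficient of $g_{pq}$ sits in degree $\ge 2d_{pq}$. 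The key numerical point is that $2d_{pq}\ge m_{pq}$ exactly when $d_{pq}\ge k_{pq}$, which need \emph{not} hold; the honest statement is simply that $g_{pq}$ is symmetric around $m_{pq}$ while $b_{pq}$ is entirely supported in degrees $< m_{pq}$.

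Second I would turn this observation into the explicit formula for $\tilde U_{pq}$. Start from $R_{pq} = g_{pq} + b_{pq}$. Apply $U_{m_{pq}}$, which truncates away all terms of degree $< m_{pq}$: since $b_{pq}$ lives entirely in degrees $< m_{pq}$, we get $U_{m_{pq}}(R_{pq}) = U_{m_{pq}}(g_{pq})$, i.e.\ the ``upper half'' (degrees $\ge m_{pq}$) of the palindrome $g_{pq}$. Now shift down by $t^{-m_{pq}}$ to center this half at the origin, apply the symmetrization operator $S$ which reflects a one-sided polynomial $\sum_{\alpha\ge 0}c_\alpha t^\alpha$ to the two-sided $c_0 + \sum_{\alpha\ge 1}c_\alpha(t^\alpha + t^{-\alpha})$, thereby reconstructing the full centered palindrome from its nonnegative part, and finally shift back up by $t^{m_{pq}}$. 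The composite $\tilde t^{m_{pq}}\circ S\circ \tilde t^{-m_{pq}}\circ U_{m_{pq}}$ therefore returns precisely $g_{pq}$, provided that the ``middle coefficient'' bookkeeping is consistent — namely that the coefficient of $t^{m_{pq}}$ in $R_{pq}$ equals the coefficient of $t^{m_{pq}}$ in $g_{pq}$ alone, which is exactly the content of $\deg b_{pq} < m_{pq}$. Once $g_{pq}$ is known, $b_{pq} = R_{pq} - g_{pq}$ is immediate, which is the second line of the claim.

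The one genuine subtlety — and the step I expect to need the most care — is justifying that $R_{pq}$ as \emph{defined} in \eqref{EqRpq} (namely $a_{pq} - \sum_{p<\tau<q} g_{p\tau}b_{\tau q}$) really does coincide with $g_{pq}+b_{pq}$ at the level of this truncation argument. This is where the hypothesis $\sum(q_\alpha - p_\alpha)\ge 1$ enters: it guarantees $\Delta_q\subsetneq\Delta_p$, so $m_{pq}\ge 1$ and we are in the ``otherwise'' branch of Corollary~\ref{CorPolExp}, which gives exactly $a_{pq} = b_{pq}+g_{pq}+\sum_{p<\tau<q}g_{p\tau}b_{\tau q}$, i.e.\ $R_{pq} = b_{pq}+g_{pq}$. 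So the formula $g_{pq}=\tilde U_{pq}(R_{pq})$ is a direct consequence of Corollary~\ref{CorPolExp} together with the two structural facts (palindromicity of $g_{pq}$ about $m_{pq}$ from Hard Lefschetz, and $\deg b_{pq}<m_{pq}$ from the IC support conditions). I would also remark that this is what makes \emph{KaLu} iterative: $R_{pq}$ involves only $a_{pq}$ (explicit by Proposition~\ref{Prop_apq}) and the polynomials $g_{p\tau}, b_{\tau q}$ for strictly smaller distances $|\tau - p| < |q-p|$ and $|q-\tau|<|q-p|$, so everything is computable by induction on $|q-p|$ with base case $p=q$ handled by $a_{pp}=g_{pp}=b_{pp}=1$.
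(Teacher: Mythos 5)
Your proposal is correct and follows essentially the same route as the paper: it derives $\deg b_{pq}<m_{pq}$ from the support conditions for $IC_{\Delta_p}^{\bullet}$, the palindromy of $g_{pq}=f_{pq}t^{2d_{pq}}$ about $m_{pq}=\delta_{pq}+2d_{pq}$ from the relative Hard Lefschetz symmetry in Theorem~\ref{ThMain}, identifies $R_{pq}=g_{pq}+b_{pq}$ via Corollary~\ref{CorPolExp}, and then unwinds the four operators in $\tilde{U}_{pq}$ exactly as in the paper's proof. The aside about whether $2d_{pq}\ge m_{pq}$ is immaterial (disjointness of the supports of $g_{pq}$ and $b_{pq}$ is not needed), and the rest of the argument is complete.
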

\begin{proof}
	Theorem~\ref{ThMain} states that the vector spaces $D_{pq}^{\alpha}$ are symmetric with respect to the exponent $\delta_{pq}$; that is, the polynomials $f_{pq}$ are symmetric with respect to the degree $\delta_{pq}$. Since $g_{pq} = f_{pq} t^{2d_{pq}}$, these polynomials are symmetric, as well, but with respect to the degree $m_{pq} := 2d_{pq} + \delta_{pq} = m_{p} - m_{q}$. On the other hand, the vector spaces $B_{pq}^{\alpha}$ were obtained by studying the intersection cohomology complexes $IC_{\Delta_{p}}^{\bullet}$ locally; that is, their restrictions to the locally closed subsets $\Delta_{q}^{0}$. Being $IC_{\Delta_{p}}^{\bullet}$ a perverse sheaf, it satisfies, in particular, the support conditions (see \cite[\textsection 2.3]{dCaMi2009}, \cite[Definition 5.1.11, Proposition 5.1.16]{Dim2004}), thus $B_{pq}^{\alpha} = 0$ for any $\alpha \geq m_{pq}$.

	Assume that $g_{p \tau}$ and $b_{\tau q}$ are known for any $p < \tau < q$; in other words, suppose that $R_{pq}$ is known (when $\sum_{\alpha = 1}^{\omega} (q_{\alpha} - p_{\alpha}) = 1$, $R_{pq} = a_{pq}$ is given by Proposition~\ref{Prop_apq}). $g_{pq}$ can be obtained by $R_{pq}$ by deleting all terms of degree $< m_{pq}$ and by making the new polynomial symmetric with respect to the term of degree $m_{pq}$. Formally, $g_{pq} = \tilde{U}_{pq}(R_{pq})$: indeed
	\begin{enumerate}
		\item	the function $U_{m_{pq}}$ deletes the terms of degree $< m_{pq}$ of $R_{pq}$;
		\item	the function $\tilde{t}^{- m_{pq}}$ is just multiplication by $t^{- m_{pq}}$;
		\item	the function $S$ makes the obtained polynomial symmetric with respect to the term of degree 0;
		\item	the function $\tilde{t}^{m_{pq}}$ shifts the polynomial so as to make it symmetric with respect to the term of degree $m_{pq}$.
	\end{enumerate}
	Finally, $b_{pq}$ is obtained by Equation \eqref{EqGpqBpq}; namely, $b_{pq} = R_{pq} - g_{pq}$.
\end{proof}

The number $m_{pq}$, which is nothing but the codimension of $\Delta_{q}$ in $\Delta_{p}$, plays an important role in \textit{KaLu} because of Corollary~\ref{cor:Utilde}, and is easily calculated if we interpret it by means of Ferrer's diagrams. In fact, from what we observed at the end of Section~\ref{SubsecFerrersdiagrams}, it follows that \textit{$m_{pq}$ is exactly the area of the region between the Ferrer's diagrams of $\Delta_{p}$ and $\Delta_{q}$}. In formulas,
\begin{equation*}
	m_{pq} = m_{p} - m_{q} = \sum_{\alpha = 1}^{k} (\lambda_{\alpha}^{q} - \lambda_{\alpha}^{p}).
\end{equation*}

Thanks to Proposition~\ref{Prop_apq} and Corollaries~\ref{CorPolExp} and \ref{cor:Utilde}, we obtain the algorithm \textit{KaLu}$\big(p,\mathcal I,\mathcal J,k,l,q\big)$ (see Algorithm~\ref{algorithm}), which will be referred to as \textit{KaLu}, for computing the polynomials $b_{pq}$. An implementation of \textit{KaLu} in CoCoA5 is available \url{http://wpage.unina.it/carmine.sessa2/KaLu}.

\begin{algorithm}[!ht]
	\caption{\label{algorithm} Algorithm for computing the polynomial  $b_{pq}$, given $p \leq q$. It also computes all the polynomials $b_{\tau \eta^{\tau}}$, with both $\tau$ and $\eta$ $p$-admissible and $\tau < \eta$, and $g_{\tau \eta}$ if $\eta$ is $\tau$-admissible, as well.}
	\begin{algorithmic}[1]
		\STATE KaLu$\big(p,\mathcal I,\mathcal J,k,l,q\big)$
		\REQUIRE $p,\mathcal I,\mathcal J,q$ vectors of integers of the same length and $k,l$ integers such that $\mathcal I,\mathcal J,k,l$  satisfy conditions \eqref{eq:weak conditions} (hence, determine a Schubert variety $\mathcal S$) and $p$ and $q$ are admissible.
		\ENSURE The polynomial $b_{pq}$. 
		\IF{$p=q$}
		\STATE $b_{pq}:=1$;
		\ELSE
		\STATE $p:=\bar p$
		\STATE $q:=q^p$;
		\STATE $T:=[p]\cup [\tau \ : \ \tau \text{ is $p$-admissible and }  \tau < q] \cup [q]$;
		\FOR{$\mu=1,\dots, \vert q-p \vert$}
		\FOR{$(\tau,\sigma)\in T\times T$ such that $\tau<\sigma$ and $\vert\sigma-\tau\vert=\mu$}
		\IF{$\sigma$ is $\tau$-admissible}
		\STATE $R_{\tau\sigma}:=a_{\tau\sigma} - \displaystyle{\sum_{\substack{\tau < \eta < \sigma\\   \hspace{2pt} \tau-adm.}}} g_{\tau\eta} b_{\eta q}$; 
		\STATE $g_{\tau\sigma}:=\tilde U_{\tau\sigma}(R_{\tau\sigma})$; 
		\STATE $b_{\tau\sigma}:=R_{\tau\sigma}-g_{\tau\sigma}$; 
		\ELSE 
		\STATE $a_{\tau\sigma}:=a_{\tau \sigma^\tau}$; $b_{\tau\sigma}:=b_{\tau \sigma^\tau}$;
		\ENDIF; 
		\ENDFOR
		\ENDFOR
		\ENDIF
		\STATE return $b_{pq}$
	\end{algorithmic}
\end{algorithm}

\begin{proposition}
	KaLu$\big(p, \mathcal{I}, \mathcal{J}, k, l, q \big)$ returns $b_{pq}$ and computes all the polynomials $b_{\tau \eta^{\tau}}$, with both $\tau$ and $\eta$ $p$-admissible and $\tau < \eta$, and $g_{\tau \eta}$ if $\eta$ is $\tau$-admissible, as well.
\end{proposition}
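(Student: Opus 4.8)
The plan is to prove the statement by induction on the distance $\mu=\vert q-p\vert$, mirroring the two combinatorial identities on which the routine rests: Corollary~\ref{CorPolExp} (equivalently the pair of formulas \eqref{EqGpqBpq}--\eqref{EqRpq}) and Corollary~\ref{cor:Utilde}. Once correctness of the value $b_{pq}$ is settled this way, the ``also computes\dots'' clauses follow merely by reading off which variables the two nested loops touch.

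First I would dispose of the normalisation performed in lines~4--5. Since $IC^{\bullet}_{\Delta_{p}}$ depends only on the variety $\Delta_{p}$ and not on the flag used to present it, and since $q^{\bar p}=q^{p}$ (passing to $\bar p$ only isolates the necessary conditions of $\Delta_{p}$), Corollary~\ref{CorBpq} gives $b_{pq}=b_{\bar p q}=b_{\bar p q^{p}}$; hence it is harmless to replace $(p,q)$ by $(\bar p,q^{p})$, after which $p=\bar p$ and $\Delta_{q}$ is a $\Delta_{p}$-variety. If $p=q$ the routine returns $1$, which is $b_{pp}$ by Corollary~\ref{CorPolExp}; this is the base of the induction. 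For the inductive step I would assume that, after the outer loop has been run for all indices strictly below some value, every variable $g_{\tau\sigma}$, $b_{\tau\sigma}$ attached to a pair $(\tau,\sigma)\in T\times T$ with $\vert\sigma-\tau\vert$ below that value holds the correct polynomial, and then analyse one pair $(\tau,\sigma)\in T\times T$ with $\vert\sigma-\tau\vert=\mu$ as it is processed by the inner loop. If $\sigma$ is $\tau$-admissible, then $R_{\tau\sigma}$, computed from \eqref{EqRpq} with $(p,q)$ replaced by $(\tau,\sigma)$, involves only the explicit polynomial $a_{\tau\sigma}$ (Proposition~\ref{Prop_apq}) and the polynomials $g_{\tau\eta}$, $b_{\eta\sigma}$ with $\tau<\eta<\sigma$, each of which sits at a strictly smaller distance and hence is already available by the inductive hypothesis; Corollary~\ref{cor:Utilde} --- whose hypothesis $\sum_{\alpha}(\sigma_{\alpha}-\tau_{\alpha})\ge 1$ holds because $\vert\sigma-\tau\vert=\mu\ge 1$ --- then identifies $\tilde U_{\tau\sigma}(R_{\tau\sigma})$ with $g_{\tau\sigma}$ and $R_{\tau\sigma}-g_{\tau\sigma}$ with $b_{\tau\sigma}$, so both assignments in lines~12--13 are correct.

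If instead $\sigma$ is not $\tau$-admissible, the routine sets $a_{\tau\sigma}:=a_{\tau\sigma^{\tau}}$ and $b_{\tau\sigma}:=b_{\tau\sigma^{\tau}}$, the first of which is Remark~\ref{RemFibres} (the fibres $F_{\tau\sigma}$ and $F_{\tau\sigma^{\tau}}$ coincide) and the second Corollary~\ref{CorBpq}. The delicate point, which I expect to be the main obstacle, is that this branch is legitimate only if the pair $(\tau,\sigma^{\tau})$ has already been handled; here I would check that $\Delta_{\sigma^{\tau}}$ is a $\Delta_{\tau}$-variety, so $\sigma^{\tau}$ is $\tau$-admissible and coincides with its own bar with respect to $\mathcal F_{\bar\tau}$; that $\Delta_{q}\subseteq\Delta_{\sigma}\subseteq\Delta_{\sigma^{\tau}}\subseteq\Delta_{\tau}\subseteq\Delta_{p}$, so that $\sigma^{\tau}$ lies between $q^{p}$ and $p$ and, by transitivity of the admissibility relation (which one verifies directly from conditions \eqref{eq:weak conditions}), is $p$-admissible, whence $\sigma^{\tau}\in T$; and that $\vert\sigma^{\tau}-\tau\vert<\vert\sigma-\tau\vert=\mu$, which is precisely the monotonicity statement recorded at the end of Section~\ref{SecSchVar}. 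Hence $(\tau,\sigma^{\tau})$ was processed in an earlier pass of the outer loop and the inductive hypothesis supplies $a_{\tau\sigma^{\tau}}$ and $b_{\tau\sigma^{\tau}}$. This closes the induction; in particular, after the final pass, the returned value $b_{pq}$ is the claimed Kazhdan--Lusztig polynomial. For the remaining assertions, note that every pair $(\tau,\eta)$ of $p$-admissible vectors with $\tau<\eta\le q$ belongs to $T\times T$, so the inner loop visits it and, by the case analysis above, assigns the correct value to $b_{\tau\eta^{\tau}}$, and to $g_{\tau\eta}$ exactly when $\eta=\eta^{\tau}$, i.e.\ when $\eta$ is $\tau$-admissible; termination is immediate, both loops being finite.
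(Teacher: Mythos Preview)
Your proposal is correct and follows essentially the same approach as the paper's proof: both arguments justify termination trivially, normalise to $p=\bar p$ and $q=q^{p}$ via Corollary~\ref{CorBpq}, and then verify correctness by increasing distance $\mu=\vert\sigma-\tau\vert$, invoking Proposition~\ref{Prop_apq} and Corollary~\ref{cor:Utilde} in the $\tau$-admissible branch and Remark~\ref{RemFibres} together with Corollary~\ref{CorBpq} (plus the strict drop $\vert\sigma^{\tau}-\tau\vert<\vert\sigma-\tau\vert$) in the other. Your write-up is in fact a bit more explicit than the paper's on two points --- checking the hypothesis $\sum_\alpha(\sigma_\alpha-\tau_\alpha)\ge 1$ of Corollary~\ref{cor:Utilde} and arguing that $\sigma^{\tau}\in T$ via transitivity of admissibility --- but these are exactly the observations the paper uses without spelling out.
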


\begin{proof}
	This algorithm deals with a finite number of objects that are described by a finite number of data each. So the termination follows straightforwardly. For the correctness, we now analyse the command lines.
	
	We impose that $p$ is essential, that is we set $p:=\bar p$, and that all the Schubert varieties involved in the computation are represented referring to the essential pair $(\mathcal F_p,\mathcal I_p)$. Let $\omega$ be the length of $p$. 
	
	If $p \neq q$ the algorithm considers $\bar p$ and $q^p$ and computes the list $T$ of all the $\omega$-tuples $\tau< q$ that are $p$-admissible (line 7). Observe that if $\tau$ belongs to $T$ and another $\omega$-tuple $\sigma$ is $\tau$-admissible, then $\sigma$ is $p$-admissible too and hence belongs to $T$. The vice versa does not always hold.
	
	Then, for every $\mu$ between $1$ and $\vert q-p \vert$, the algorithm considers all the pairs $(\tau,\sigma)$ of elements in $T$ such that $\vert\sigma-\tau\vert=\mu$ (lines 8-9). If $\sigma$ is $\tau$-admissible, then $R_{\tau\sigma}$, $g_{\tau\sigma}$ and hence $b_{\tau\sigma}$ are computed by the formulas of Corollary~\ref{cor:Utilde}, being the explicit computation of $a_{\tau\sigma}$ possible thanks to Proposition~\ref{Prop_apq} (lines 11-13). Note that in order to apply \eqref{EqRpq} the algorithm must consider the values of $\mu$ in increasing order (line 8).
	
	If $\sigma$ is not $\tau$-admissible, the algorithm consider $\sigma^\tau$ in place of $\sigma$. In this case only the polynomials $a_{\tau\sigma}=a_{\tau \sigma^\tau}$ and $b_{\tau\sigma}=b_{\tau \sigma^\tau}$ are needed (line 15), where the equalities hold thanks to Remark~\ref{RemFibres} and Corollary~\ref{CorBpq}, as it has already been previously observed. As we have already pointed out, we have $\vert \sigma^\tau -\tau \vert < \vert \sigma -\tau \vert$, so $a_{\tau \sigma^\tau}$ and $b_{\tau \sigma^\tau}$ have already been computed.
	
	When $\mu$ reaches the value $\vert q-p \vert$, the pair $(p,q)$ is finally considered and then $b_{pq}$ can be computed. Indeed, at that moment all the necessary data to apply formula~\eqref{EqRpq} to this pair have been obtained and stored.
\end{proof}

\subsection{Polynomial identities and tests}\label{SubsecTests}

Here we shall discuss how we tested \textit{KaLu}. Roughly speaking, we are going to impose certain conditions on the maps $\pi_{p}$ and other resolutions $\xi_{p}$, which will be introduced presently. In these cases, the Kazhdan-Lusztig polynomials are immediate to determine and, consequently, we can compare them with the ones computed by \textit{KaLu}.

Let us begin by showing the relation we want to impose on $\pi_{p}$ and $\xi_{p}$. A resolution of singularities $\chi: X \rightarrow Y$, is said to be \textit{small} \cite[Definition 8.4.6]{KiWo2006} if and only if
\begin{equation*}
	\codim \{ y \in Y \hspace{0.165em} : \hspace{0.165em} \dim \chi^{-1}(y) \geq \alpha \} > 2 \alpha \quad \forall \alpha > 0.
\end{equation*}

The smallness of the resolution $\chi$ implies that $IC_{Y}^{\bullet} \cong R \chi_{*} \mathbb{Q}_{X} [\dim Y]$ \cite[Corollary, \textsection 6.2]{GoMa1983}. In particular, when $Y$ is a Schubert variety $\Delta_{p}$ for some vector $p$, the previous isomorphism gives
\begin{equation*}
	\mathcal{H}^{\alpha}(IC_{\Delta_{p}}^{\bullet})_{V} \cong H^{\alpha + \dim Y}(\chi^{-1}(V)) \quad \forall V \in \Delta_{p};
\end{equation*}
and, if $V \in \Delta_{q}^{0}$, with $\Delta_{q}$ a $\Delta_{p}$-variety, the Kazhdan-Lusztig polynomial corresponding to $\Delta_{p}$ and $\Delta_{q}$ coincides with the Poincaré polynomial of the fibre $\chi^{-1}(V)$ (see \cite[Theorem 4.4.7]{dCaMi2009}, \cite[Theorem 9.1.3]{BiLa2000}).

Let us introduce the maps $\xi_{p}$. Let $\Delta_{p}$ be an $\mathcal{S}$-variety and set
\begin{equation*}
	\mathcal{D}_{p} :=
	\begin{Bmatrix*}
		(V, U_{1}, \ldots, U_{\omega_{\bar{p}}})\\
		\in \mathbb{G}_{k}(\mathbb{C}^{l}) \times \mathbb{G}_{k + j_{1}^{\bar{p}} - i_{1}^{\bar{p}} - \bar{p}_{1}}(\mathbb{C}^{l}) \times \ldots \times \mathbb{G}_{k + j_{\omega_{\bar{p}}}^{\bar{p}} - i_{\omega_{\bar{p}}}^{\bar{p}} - \bar{p}_{\omega_{p}}}(\mathbb{C}^{l})\\
	\mbox{s.t.} \quad U_{1} \subset \ldots \subset U_{\omega_{\bar{p}}} \hspace{0.165em} \wedge \hspace{0.165em} U_{\alpha} \supseteq V + F_{j_{\alpha}^{\bar{p}}}, \quad \alpha = 1, \ldots, \omega_{\bar{p}}
	\end{Bmatrix*},
\end{equation*}
which is a \textit{smooth variety} (the proof is analogous to the one of Proposition~\ref{PropFlagSmooth}). The projection on the first factor
\begin{equation*}
	\xi_{p}: (V, U_{1}, \ldots, U_{\omega_{\bar{p}}}) \in \mathcal{D}_{p} \rightarrow V \in \Delta_{p}
\end{equation*}
is a \textit{resolution of singularities} (the proof is similar to the one of Corollary~\ref{CorResolPI}).

\begin{notation}
	From now on, we are going to change notations for legibility's sake. Whenever we deal with an $\mathcal{S}$-variety $\Delta_{p}$,
	\begin{itemize}
		\item	we shall assume that $p = \bar{p}$ and set $\nu := \omega_{p}$;
		\item	we are going to write $(\mathcal{F}_{p}, \mathcal{I}_{p}) = (F_{\zeta_{1}} \subset \ldots \subset F_{\zeta_{\nu}}, (\iota_{1}, \ldots, \iota_{\nu}))$ instead of $(F_{j_{1}^{p}} \subset \ldots \subset F_{j_{\nu}^{p}}, (i_{1}^{p} + p_{1}, \ldots, i_{\nu}^{p} + p_{\nu}))$.
	\end{itemize}
	Moreover, any $\Delta_{p}$-variety $\Delta_{q}$ is supposed to be described in terms of the flag $\mathcal{F}_{p}$ (in particular, $q$ is a $\nu$-tuple) and we set
	\begin{equation*}
		\varepsilon = (\varepsilon_{\alpha})_{\alpha = 1, \ldots, \nu} = (q_{\alpha} - p_{\alpha})_{\alpha = 1, \ldots, \nu}.
	\end{equation*}
\end{notation}

Let $V \in \Delta_{p}$. There is a $p$-admissible $q$ such that $V \in \Delta_{q}^{0}$, so the fibre of $\xi_{p}$ at $V$ is
\begin{align*}
	G_{pq} := \xi_{p}^{-1}(V) \cong
	\begin{Bmatrix*}
		(U_{1}, \ldots, U_{\nu})\\
		\in \mathbb{G}_{k + \zeta_{1} - \iota_{1}}(\mathbb{C}^{l}) \times \ldots \times \mathbb{G}_{k + \zeta_{\nu} - \iota_{\nu}}(\mathbb{C}^{l})\\
		\mbox{s.t.} \quad U_{1} \subset \ldots \subset U_{\nu} \hspace{0.165em} \wedge \hspace{0.165em} U_{\alpha} \supseteq V + F_{\zeta_{\alpha}}, \hspace{0.33em} \alpha = 1, \ldots, \nu
	\end{Bmatrix*}
	.
\end{align*}
Its Poincaré polynomial is
\begin{align*}
	H_{G_{pq}} &= H_{\mathbb{G}_{\varepsilon_{\nu}}(\mathbb{C}^{l - k - \zeta_{\nu} + \iota_{\nu} + \varepsilon_{\nu}})} \cdot \prod_{\alpha = 1}^{\nu - 1} H_{\mathbb{G}_{\varepsilon_{\alpha}}(\mathbb{C}^{\zeta_{\alpha + 1} - \iota_{\alpha + 1} - \zeta_{\alpha} + \iota_{\alpha} + \varepsilon_{\alpha}})}\\
	&= \frac{P_{l - k - \zeta_{\nu} + \iota_{\nu} + \varepsilon_{\nu}}}{P_{\varepsilon_{\nu}} P_{l - k - \zeta_{\nu} + \iota_{\nu}}} \cdot \prod_{\alpha = 1}^{\nu - 1} \frac{P_{\zeta_{\alpha + 1} - \iota_{\alpha + 1} - \zeta_{\alpha} + \iota_{\alpha} + \varepsilon_{\alpha}}}{P_{\varepsilon_{\alpha}} P_{\zeta_{\alpha + 1} - \iota_{\alpha + 1} - \zeta_{\alpha} + \iota_{\alpha}}}.
\end{align*}
and its dimension, which can be interpreted by means of Ferrer's diagrams as shown in Example~\ref{ExaFpqGpq}, is
\begin{equation}
	\dim G_{pq} = \varepsilon_{\nu} \lambda_{\iota_{\nu}}^{p} + \sum_{\alpha = 1}^{\nu - 1} \varepsilon_{\alpha}(\lambda_{\iota_{\alpha}}^{p} - \lambda_{\iota_{\alpha + 1}}^{p}).
\end{equation}

The reason why we introduced this new family of resolution of singularities is that it is easy to determine whether $\xi_{p}$ is small (the same holds for the maps $\pi_{p}$; see Lemmas~\ref{RemSmallness} and \ref{LemSmallnessZel}); therefore they provide more examples which can be used to test \textit{KaLu}. Indeed, as we recalled at the beginning of this subsection, when $\pi_{p}$ and $\xi_{p}$ are small, the Kazhdan-Lusztig polynomial corresponding to $\Delta_{p}$ and $\Delta_{q}$ coincides with the Poincaré polynomial of the fibres $F_{pq}$ and $G_{pq}$, respectively (see Corollary~\ref{CorPolId}). Such polynomials are explicit, hence they can be compared to the ones $b_{pq}$ computed by \textit{KaLu}.

The following result is nothing but a consequence of the definition of smallness.
\begin{remark}\label{RemSmallness}
	$\xi_{p}$ is small if and only if $m_{pq} = m_{p} - m_{q} > 2 \dim G_{pq}$ for all $\Delta_{p}$-varieties $\Delta_{q}$. Similarly, $\pi_{p}$ is small if and only if $m_{pq} = m_{p} - m_{q} > 2 \dim F_{pq}$ for all $\Delta_{p}$-varieties $\Delta_{q}$.
\end{remark}

Proposition~\ref{RemSmallness} allows us to check the smallness property by means of the Ferrer's diagrams, since all numbers $m_{\tau}$, $\dim F_{p \tau}$ and $\dim G_{p \tau}$ have a suitable representation. Nonetheless, it is not convenient to check either $m_{pq} > 2 \dim G_{pq}$ or $m_{pq} > 2 \dim F_{pq}$ \textit{for all} $p$-admissible $q$; yet, the combination of Proposition~\ref{RemSmallness} with the next lemma yields an easy-to-compute smallness characterization.

\textit{Put} $\iota_{0} = \lambda_{\iota_{\nu + 1}}^{p} = 0$.

\begin{lemma}\label{LemSmallnessZel}
	\textbf{\cite[p. 144]{Zel1983}}. For any $\Delta_{p}$-variety $\Delta_{q}$,
	\begin{equation}\label{EqLemSmallnessZel}
		m_{pq} = \sum_{\alpha = 1}^{\nu} \varepsilon_{\alpha}(\iota_{\alpha} - \iota_{\alpha - 1} + \lambda_{\iota_{\alpha}}^{p} - \lambda_{\iota_{\alpha + 1}}^{p}) + B(\varepsilon),
	\end{equation}
	where the form
	\begin{equation*}
		B(\varepsilon) = \varepsilon_{\nu}^{2} + \sum_{\alpha = 1}^{\nu - 1} \varepsilon_{\alpha}^{2} - \varepsilon_{\alpha} \varepsilon_{\alpha + 1}
	\end{equation*}
	is positive definite (see also Example~\ref{ExaFpqGpq}).
\end{lemma}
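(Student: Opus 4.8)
The statement comprises two logically separate claims: the codimension identity \eqref{EqLemSmallnessZel}, and the positive definiteness of the quadratic form $B(\varepsilon)=\varepsilon_\nu^{2}+\sum_{\alpha=1}^{\nu-1}(\varepsilon_\alpha^{2}-\varepsilon_\alpha\varepsilon_{\alpha+1})$. The plan is to obtain \eqref{EqLemSmallnessZel} from the dictionary between codimension and area of Ferrer's diagrams recalled in Section~\ref{SubsecFerrersdiagrams} (or, alternatively, to simply invoke \cite[p.~144]{Zel1983}, to which the identity is due), and to settle positive definiteness by exhibiting an explicit sum of squares.

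For \eqref{EqLemSmallnessZel}: since $\Delta_q\subseteq\Delta_p$, the diagram of $\Delta_p$ is contained in that of $\Delta_q$, so $m_{pq}=m_p-m_q$ is the number of boxes of the skew region $\lambda^q/\lambda^p$, i.e.\ $\sum_{\beta=1}^{k}(\lambda_\beta^{q}-\lambda_\beta^{p})$. Using the definition of the sequence $\lambda^{\mathcal S}$ in the notation of this subsection, $\lambda^{p}$ is constant equal to $\lambda^p_{\iota_\alpha}=l-k-\zeta_\alpha+\iota_\alpha$ on the rows $\iota_{\alpha-1}+1,\dots,\iota_\alpha$ and zero afterwards, while $\lambda^{q}$ has the same shape with each $\iota_\alpha$ replaced by $\iota_\alpha+\varepsilon_\alpha$. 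I would then evaluate $\sum_\beta(\lambda^q_\beta-\lambda^p_\beta)$ by splitting the index range $1,\dots,k$ at the breakpoints $\iota_\alpha$ and $\iota_\alpha+\varepsilon_\alpha$ and collecting the contributions by degree in $\varepsilon$: the terms linear in $\varepsilon$ assemble into $\sum_{\alpha=1}^{\nu}\varepsilon_\alpha(\iota_\alpha-\iota_{\alpha-1}+\lambda^p_{\iota_\alpha}-\lambda^p_{\iota_{\alpha+1}})$, and the quadratic ones, coming from the mutual overlap of the newly added horizontal strips of rows $\iota_\alpha+1,\dots,\iota_\alpha+\varepsilon_\alpha$, assemble into $B(\varepsilon)$ (this is exactly the picture of Example~\ref{ExaFpqGpq}). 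I expect this bookkeeping to be the only delicate point: the breakpoints $\iota_\alpha$ and $\iota_\alpha+\varepsilon_\alpha$ need not interleave monotonically, so I would organize the count block by block along the diagram of $\Delta_p$ rather than row by row, which makes the cross-terms transparent and avoids a case analysis; a direct check for $\nu=1,2$ already confirms that both the linear term and $B(\varepsilon)$ come out as stated.

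For positive definiteness, it suffices to note the one-line identity
\[
	2B(\varepsilon)=\varepsilon_1^{2}+\varepsilon_\nu^{2}+\sum_{\alpha=1}^{\nu-1}(\varepsilon_\alpha-\varepsilon_{\alpha+1})^{2},
\]
obtained by expanding the right-hand side. As a sum of squares it is non-negative, and it vanishes only when $\varepsilon_1=0$ and $\varepsilon_\alpha=\varepsilon_{\alpha+1}$ for all $\alpha=1,\dots,\nu-1$, hence only for $\varepsilon=0$; therefore $B$ is positive definite. Equivalently, $2B$ is the quadratic form attached to the Cartan matrix of type $A_\nu$, whose positive definiteness is classical.
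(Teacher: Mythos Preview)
The paper does not supply its own proof of this lemma: it is stated with attribution to \cite[p.~144]{Zel1983} and accompanied only by a reference to Example~\ref{ExaFpqGpq} for a pictorial illustration of how the various pieces (the terms $\varepsilon_\alpha^2$, $\varepsilon_\alpha\varepsilon_{\alpha+1}$, $\dim F_{pq}$, $\dim G_{pq}$) fit together in the Ferrer's diagram. Your proposal is therefore strictly more detailed than what the paper offers, and it is correct.

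Your sketch of the codimension identity via the area of the skew shape $\lambda^q/\lambda^p$ is exactly the mechanism behind the cited result and matches the picture in Example~\ref{ExaFpqGpq}; your remark that one may alternatively simply invoke \cite[p.~144]{Zel1983} is precisely what the paper does. For positive definiteness, your sum-of-squares identity
\[
2B(\varepsilon)=\varepsilon_1^{2}+\varepsilon_\nu^{2}+\sum_{\alpha=1}^{\nu-1}(\varepsilon_\alpha-\varepsilon_{\alpha+1})^{2}
\]
is valid (both sides equal $2\sum_{\alpha=1}^{\nu}\varepsilon_\alpha^{2}-2\sum_{\alpha=1}^{\nu-1}\varepsilon_\alpha\varepsilon_{\alpha+1}$) and gives a clean self-contained argument that the paper does not spell out; the Cartan matrix observation is a nice alternative.
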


\begin{proposition}\label{PropSmallnessLighter}
	$\xi_{p}$ is small if and only if $\iota_{\alpha} - \iota_{\alpha - 1} \geq \lambda_{\iota_{\alpha}}^{p} - \lambda_{\iota_{\alpha + 1}}^{p}$ for any $\alpha \in \{ 1, \ldots, \nu \}$. Analogously, $\pi_{p}$ is small if and only if $\iota_{\alpha} - \iota_{\alpha - 1} \leq \lambda_{\iota_{\alpha}}^{p} - \lambda_{\iota_{\alpha + 1}}^{p}$ for any $\alpha \in \{ 1, \ldots, \nu \}$. In particular, both resolutions are small if equality holds.
\end{proposition}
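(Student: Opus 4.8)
The plan is to derive the characterization directly from the smallness criterion of Proposition~\ref{RemSmallness}, combined with the explicit formula for $m_{pq}$ in Lemma~\ref{LemSmallnessZel} and the expressions for $\dim F_{pq}$ and $\dim G_{pq}$, so that everything is reduced to the positive-definiteness of the form $B$.

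First I would rewrite the two fibre dimensions in the running notation, using the conventions $\iota_0 = \lambda^p_{\iota_{\nu+1}} = 0$: the displayed formula for $\dim G_{pq}$ becomes $\dim G_{pq} = \sum_{\alpha=1}^{\nu}\varepsilon_\alpha\bigl(\lambda^p_{\iota_\alpha} - \lambda^p_{\iota_{\alpha+1}}\bigr)$, and the dimension formula for $F_{pq}$ from Section~\ref{SubsecCommSq} becomes $\dim F_{pq} = \sum_{\alpha=1}^{\nu}\varepsilon_\alpha\bigl(\iota_\alpha - \iota_{\alpha-1}\bigr)$. Substituting these and the expression of $m_{pq}$ from Lemma~\ref{LemSmallnessZel}, and setting $c_\alpha := (\iota_\alpha - \iota_{\alpha-1}) - \bigl(\lambda^p_{\iota_\alpha} - \lambda^p_{\iota_{\alpha+1}}\bigr)$, one gets $m_{pq} - 2\dim G_{pq} = \sum_\alpha c_\alpha\varepsilon_\alpha + B(\varepsilon)$ and $m_{pq} - 2\dim F_{pq} = -\sum_\alpha c_\alpha\varepsilon_\alpha + B(\varepsilon)$, where $\varepsilon = q - p$ ranges over all nonzero tuples of non-negative integers for which $\Delta_q$ is a $\Delta_p$-variety. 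By Proposition~\ref{RemSmallness}, $\xi_p$ is small if and only if $\sum_\alpha c_\alpha\varepsilon_\alpha + B(\varepsilon) > 0$ for every such $\varepsilon$, and $\pi_p$ is small if and only if the same holds with $c_\alpha$ replaced by $-c_\alpha$. So the statement reduces to: this positivity holds for every admissible $\varepsilon\neq 0$ if and only if $c_\alpha \geq 0$ for all $\alpha$, i.e.\ $\iota_\alpha - \iota_{\alpha-1} \geq \lambda^p_{\iota_\alpha} - \lambda^p_{\iota_{\alpha+1}}$ for all $\alpha$.

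Sufficiency is then immediate: if $c_\alpha \geq 0$ for all $\alpha$, then, since $\varepsilon$ has non-negative entries and $B$ is positive definite (Lemma~\ref{LemSmallnessZel}), $\sum_\alpha c_\alpha\varepsilon_\alpha + B(\varepsilon) \geq B(\varepsilon) > 0$ whenever $\varepsilon\neq 0$, so $\xi_p$ is small; the argument for $\pi_p$ is symmetric, and the case of equality in all $c_\alpha$ gives both resolutions small at once. For necessity I would argue by contraposition: if $c_\beta < 0$ for some $\beta$, take $\varepsilon$ to be the $\beta$-th standard basis vector. A short computation gives $B(\varepsilon) = 1$, hence $\sum_\alpha c_\alpha\varepsilon_\alpha + B(\varepsilon) = c_\beta + 1 \leq 0$ because $c_\beta$ is a negative integer, so $\xi_p$ fails to be small; the same $\varepsilon$ shows that $c_\beta > 0$ obstructs the smallness of $\pi_p$.

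The only genuinely delicate point is the verification that each standard basis vector $e_\beta$ is a $p$-admissible increment, that is, that $\Delta_{p+e_\beta}$ really is a $\Delta_p$-variety for every $\beta$. This is a routine check against the inequalities in the Remark following Definition~\ref{def:S-variety}, using that the pair defining $\Delta_p$ is essential so those inequalities hold with room to spare; it is exactly the fact underlying the description of $\Sing\Delta_p$ as the union $\Delta_{(1,0,\ldots,0)}\cup\cdots\cup\Delta_{(0,\ldots,0,1)}$ employed earlier in the paper. I expect this admissibility bookkeeping, rather than the positivity argument, to be the main obstacle.
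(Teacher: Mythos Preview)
Your proof is correct and follows essentially the same approach as the paper: both directions rest on Lemma~\ref{LemSmallnessZel} and the positive-definiteness of $B$, with necessity obtained by testing the standard basis vectors $e_\beta$ (which are $p$-admissible precisely because the pair defining $\Delta_p$ is essential). Your introduction of the shorthand $c_\alpha$ and the symmetric treatment of $\xi_p$ and $\pi_p$ via $\pm c_\alpha$ is a mild organisational improvement, but the argument is otherwise identical to the paper's.
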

\begin{proof}
	We are going to prove the statement for $\xi_{p}$ only.
	\begin{description}[leftmargin = 5mm]
		\item[$\Rightarrow$] Suppose that $\xi_{p}$ is small. By Proposition~\ref{RemSmallness}, for any $p$-admissible $q$,
		\begin{align*}
			0 &< m_{pq} - 2 \dim G_{pq}\\
			&= \varepsilon_{\nu}^{2} + \sum_{\alpha = 1}^{\nu - 1} \varepsilon_{\alpha}^{2} - \varepsilon_{\alpha} \varepsilon_{\alpha + 1} + \sum_{\alpha = 1}^{\nu}  \varepsilon_{\alpha}(\iota_{\alpha} - \iota_{\alpha - 1}) - \sum_{\alpha = 1}^{\nu} \varepsilon_{\alpha}(\lambda_{\iota_{\alpha}}^{p} - \lambda_{\iota_{\alpha + 1}}^{p}),
		\end{align*}
		where we used Lemma~\ref{LemSmallnessZel} for the equality. This relation holds, in particular, for all
		\begin{align*}
			q \in \{ v_{1}, \ldots, v_{\nu} \} = \{ p + (1, 0, \ldots, 0), \ldots, p + (0, \ldots, 0, 1) \}
		\end{align*}
		(they are all $p$-admissible because $\Delta_{p}$ has $\nu$ essential conditions). Hence, for any $\alpha \in \{ 1, \ldots, \nu \}$ such that $v_{\alpha}$ is $p$-admissible,
		\begin{equation*}
			0 < m_{pv_{\alpha}} - 2 \dim G_{pv_{\alpha}} = 1 + \iota_{\alpha} - \iota_{\alpha - 1} - (\lambda_{\iota_{\alpha}}^{p} - \lambda_{\iota_{\alpha + 1}}^{p});
		\end{equation*}
i.e.
		\begin{equation*}
			\iota_{\alpha} - \iota_{\alpha - 1} \geq \lambda_{\iota_{\alpha}}^{p} - \lambda_{\iota_{\alpha + 1}}^{p}.
		\end{equation*}
		\item[$\Leftarrow$] Assume that the inequality holds. For any $p$-admissible $q$,
		\begin{alignat*}{2}
			m_{pq} &= \sum_{\alpha = 1}^{\nu} \varepsilon_{\alpha}(\iota_{\alpha} - \iota_{\alpha - 1} + \lambda_{\iota_{\alpha}}^{p} - \lambda_{\iota_{\alpha + 1}}^{p}) + B(\varepsilon) && \quad \mbox{Formula } \eqref{EqLemSmallnessZel}\\
			&= \dim G_{pq} + \dim F_{pq} + B(\varepsilon) && \\
			&\geq 2 \dim G_{pq} + B(\varepsilon) && \quad \mbox{hypothesis}\\
			&> 2 \dim G_{pq}. && \quad \mbox{$B(\varepsilon)$ is positive definite}
		\end{alignat*}
		Proposition~\ref{RemSmallness} guarantees that $\xi_{p}$ is small. \qedhere
	\end{description}
\end{proof}

In terms of Ferrer's diagrams, $\xi_{p}$ is small if and only if the $\alpha$-th vertical line of the diagram of $\Delta_{p}$ is longer than its $\alpha$-th horizontal line for all $\alpha$, whereas $\pi_{p}$ is small if and only if the converse is true. If equality holds for all $\alpha$, both $\xi_{p}$ and $\pi_{p}$ are small.

\begin{corollary}[Polynomial identities]\label{CorPolId}
	If $\iota_{\alpha} - \iota_{\alpha - 1} \geq \lambda_{\iota_{\alpha}}^{p} - \lambda_{\iota_{\alpha + 1}}^{p}$ for any $\alpha \in \{ 1, \ldots, \nu \}$ (respectively, $\leq$), then the Poincaré polynomial $H_{G_{pq}}$ (respectively, $H_{F_{pq}}$) of the fibre equals $b_{pq}$ for any $p$-admissible $q$.
\end{corollary}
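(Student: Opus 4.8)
The plan is to obtain Corollary~\ref{CorPolId} as a direct consequence of the smallness criterion of Proposition~\ref{PropSmallnessLighter} and of the standard fact, recalled just before the statement, that a small resolution realizes the intersection cohomology complex.

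First I would assume $\iota_{\alpha} - \iota_{\alpha - 1} \geq \lambda_{\iota_{\alpha}}^{p} - \lambda_{\iota_{\alpha + 1}}^{p}$ for every $\alpha \in \{ 1, \ldots, \nu \}$. By Proposition~\ref{PropSmallnessLighter} this is exactly the condition that $\xi_{p} \colon \mathcal{D}_{p} \to \Delta_{p}$ be small, hence $IC_{\Delta_{p}}^{\bullet} \cong R \xi_{p*} \mathbb{Q}_{\mathcal{D}_{p}}[m_{p}]$ by \cite[Corollary, \textsection 6.2]{GoMa1983}, i.e.\ $IC_{\Delta_{p}}^{\bullet}[-m_{p}] \cong R \xi_{p*} \mathbb{Q}_{\mathcal{D}_{p}}$. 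Now fix a $p$-admissible $q$, so that $\Delta_{q}$ is a $\Delta_{p}$-variety and $\xi_{p}^{-1}(V) \cong G_{pq}$ for $V \in \Delta_{q}^{0}$. As $\xi_{p}$ is proper, proper base change yields $\mathcal{H}^{\alpha}(IC_{\Delta_{p}}^{\bullet}[-m_{p}])_{V} \cong H^{\alpha}(G_{pq})$ for all $\alpha$. Comparing this with the stalk of the decomposition $IC_{\Delta_{p}}^{\bullet}[-m_{p}]|_{\Delta_{q}^{0}} \cong \bigoplus_{\alpha \geq 0} B_{pq}^{\alpha} \otimes \mathbb{Q}_{\Delta_{q}^{0}}[-\alpha]$ of Theorem~\ref{ThMain}~(ii), we get $\dim_{\mathbb{Q}} B_{pq}^{\alpha} = \dim_{\mathbb{Q}} H^{\alpha}(G_{pq})$ for every $\alpha$, whence $b_{pq} = \sum_{\alpha \geq 0} \dim_{\mathbb{Q}} B_{pq}^{\alpha}\, t^{\alpha} = H_{G_{pq}}$. (This is just the classical statement that for a small resolution the Kazhdan--Lusztig polynomial of $(\Delta_{p},\Delta_{q})$ coincides with the Poincaré polynomial of the fibre; cf.\ \cite[Theorem 4.4.7]{dCaMi2009}, \cite[Theorem 9.1.3]{BiLa2000}.)

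The case $\iota_{\alpha} - \iota_{\alpha - 1} \leq \lambda_{\iota_{\alpha}}^{p} - \lambda_{\iota_{\alpha + 1}}^{p}$ for all $\alpha$ is handled verbatim after replacing $\xi_{p}$ by $\pi_{p}$, $\mathcal{D}_{p}$ by $\tilde{\Delta}_{p}$ and $G_{pq}$ by $F_{pq}$: Proposition~\ref{PropSmallnessLighter} now makes $\pi_{p}$ small, $IC_{\Delta_{p}}^{\bullet} \cong R \pi_{p*} \mathbb{Q}_{\tilde{\Delta}_{p}}[m_{p}]$, and the same stalk computation gives $b_{pq} = H_{F_{pq}}$. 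If equality holds for every $\alpha$, both $\xi_{p}$ and $\pi_{p}$ are small, the two identities hold at once, and in particular $H_{F_{pq}} = H_{G_{pq}}$ for every $p$-admissible $q$.

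I do not expect a real obstacle here: all the substance is already contained in Proposition~\ref{PropSmallnessLighter} (the combinatorial reading of smallness off the Ferrer's diagram) and in the general principle that a small resolution computes $IC$, the present corollary being only the translation of this into the Poincaré polynomials $a_{pq}$, $b_{pq}$ driving \textit{KaLu}. The single point deserving a line of care is that $p$-admissibility of $q$ is what guarantees that $\Delta_{q}^{0}$ is one of the strata occurring in the decomposition of $IC_{\Delta_{p}}^{\bullet}$, so that the stalk cohomology of the fibre recovers the whole polynomial $b_{pq}$ and not just one of its direct summands.
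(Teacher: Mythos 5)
Your argument is correct and is precisely the one the paper intends: Proposition~\ref{PropSmallnessLighter} gives smallness of $\xi_{p}$ (resp.\ $\pi_{p}$), the isomorphism $IC_{\Delta_{p}}^{\bullet} \cong R\chi_{*}\mathbb{Q}[\dim \Delta_{p}]$ for a small resolution then identifies the stalks of $IC_{\Delta_{p}}^{\bullet}$ over $\Delta_{q}^{0}$ with the cohomology of the fibre, and comparison with Theorem~\ref{ThMain}~(ii) yields $b_{pq} = H_{G_{pq}}$ (resp.\ $H_{F_{pq}}$). This matches the discussion the paper gives at the start of Section~\ref{SubsecTests}, so nothing further is needed.
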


Let us conclude by listing the cases in which we tested \textit{KaLu}. We used the conditions of Proposition~\ref{PropSmallnessLighter} and, separately, the resolutions $\pi_{p}$ and $\xi_{p}$. We examined all Schubert varieties with at most 5 conditions and the restriction $l \leq 20$ and all Schubert varieties obtained imposing $i_{1} < \ldots < i_{10} < k \leq j_{1} < \ldots < j_{10} < l \leq 25$. The whole outputs of our tests are available at
\\
\url{http://wpage.unina.it/carmine.sessa2/KaLu/Tests\_Pi\_Small} and
\\
\url{http://wpage.unina.it/carmine.sessa2/KaLu/Tests\_Csi\_Small}.
\\
It is also worth stressing out that, for instance,
\begin{itemize}
	\item	$\pi_{p}$ is small as opposed to $\xi_{p}$ if\\
	$i_{1} = 1$, $i_{2} = 2$, $k = 3$, $j_{1} = 4$, $j_{2} = 6$ and $l = 9$;
	\item	$\xi_{p}$ is small as opposed to $\pi_{p}$ if\\
	$i_{1} = 1$, $i_{2} = 3$, $k = 4$, $j_{1} = 5$, $j_{2} = 8$ and $l = 10$;
	\item	both $\pi_{p}$ and $\xi_{p}$ are small if\\
	$i_{1} = 1$, $i_{2} = 2$, $k = 3$, $j_{1} = 4$, $j_{2} = 6$ and $l = 8$;
	\item	neither of $\pi_{p}$ and $\xi_{p}$ is small if\\
	$i_{1} = 2$, $i_{2} = 3$, $k = 4$, $j_{1} = 5$, $j_{2} = 7$ and $l = 10$.
\end{itemize}

\subsection{Relevant varieties}\label{SubsecRelevant}

In this conclusive paragraph, we show that, though $\pi_{0}$ is not small, there may be some $\mathcal{S}$-varieties which do not contribute to the decomposition of $R \pi_{0*} \mathbb{Q}_{\tilde{\mathcal{S}}}$ given by the combination of Theorem~\ref{ThMain} and decomposition theorem; that is, there are $\mathcal{S}$-varieties $\Delta_{p}$ such that $g_{0p} = 0$.

Let us restate the definition of relevant variety, given in the introduction, by means of the notations introduced so far.
\begin{definition}
	Given a Schubert variety $\mathcal{S}$, an $\mathcal{S}$-variety $\Delta_{q} \neq \mathcal{S}$ is said to be \textbf{$\pi_{0}$-relevant} if and only if $m_{0q} \leq 2 \dim F_{0q}$.
\end{definition}

We know that the smooth locus $\Delta_{q}^{0}$ of a Schubert variety $\Delta_{q} \subset \mathcal{S}$ cannot be a support of the decomposition unless $\Delta_{q}$ is an $\mathcal{S}$-variety. If $q$ is admissible and $m_{0q} > 2 \dim F_{0q}$, $\Delta_{q}$ does not provide any contribution in the decomposition. It might seem reasonable to expect that the converse occurs when $\Delta_{q}$ is $\pi_{0}$-relevant, yet, we are able to prove, by virtue of \textit{KaLu}, that this is not always the case. In Table~\ref{TabRelevant} there are some examples of $\pi_{0}$-relevant varieties whose contribution in the decomposition is null; i.e.~$g_{0q} = 0$. Richer lists are available in the ancillary files at \url{http://wpage.unina.it/carmine.sessa2/KaLu/Tests\_Relevant\_Varieties}.

\begin{table}
	\centering
	\begin{tabular}{c|c|c|c|c|c}
		$\omega$ & $I = [ i_{1}, \ldots, i_{\nu} ]$ & $k$ & $J = [ j_{1}, \ldots, j_{\nu} ]$ & $l$ & $q = [q_{1}, \ldots, q_{\nu}]$\\
		\hline
		2 & [3,  4] & 5 & [6,  8] & 11 & [2,  1]\\
		\hline
		\multirow{3}{*}{3} & \multirow{3}{*}{[3,  4,  5]} & \multirow{3}{*}{6} & \multirow{3}{*}{[7,  9,  11]} & \multirow{3}{*}{13} & [1,  0,  1]\\
		 & & & & & [1,  2,  1]\\
		 & & & & & [3,  2,  1]\\
		\hline
		\multirow{15}{*}{4} & \multirow{15}{*}{[3,  4,  5,  6]} & \multirow{15}{*}{7} & \multirow{15}{*}{[8,  10,  12,  14]} & \multirow{15}{*}{16} & [1,  0,  1,  0]\\
		 & & & & & [1,  0,  0,  1]\\
		 & & & & & [1,  1,  0,  1]\\
		 & & & & & [1,  0,  1,  1]\\
		 & & & & & [2,  1,  0,  1]\\
		 & & & & & [1,  2,  1,  0]\\
		 & & & & & [1,  2,  1,  1]\\
		 & & & & & [1,  1,  2,  1]\\
		 & & & & & [3,  2,  1,  0]\\
		 & & & & & [2,  1,  2,  1]\\
		 & & & & & [1,  2,  2,  1]\\
		 & & & & & [3,  2,  1,  1]\\
		 & & & & & [3,  2,  2,  1]\\
		 & & & & & [2,  3,  2,  1]\\
		 & & & & & [3,  3,  2,  1]
	\end{tabular}
	\caption{Here is, for $\omega = 2, 3, 4$, the first set of integers for which there are $\pi_{0}$-relevant varieties such that $g_{0q} = 0$.}
	\label{TabRelevant}
\end{table}

At the moment, we are not in position to explain the geometrical reason behind this phenomenon, although it is immediate to see that \textit{KaLu} gives $g_{0q} = 0$ because $g_{0q}$ is obtained by symmetrizing the polynomial $R_{0q}$ with respect to the degree $m_{0q} > \deg R_{0q}$. Furthermore, it would be interesting to understand if there exists a characterization of the $\pi_{0}$-relevant varieties which actually contribute to the decomposition theorem.

\section{Appendix: examples of Ferrer's diagrams}\label{SecExamples}

Here are a few examples of Ferrer's diagrams, each of which stresses out certain properties of Schubert varieties.

\begin{example}\label{ExaScV}
	Let $\mathcal{S}$ be the Schubert variety given by
	\begin{alignat*}{10}
		&i_{1} &&= 1, \qquad &&i_{2} &&= 2, \qquad &&i_{3} &&= 3, \qquad &&i_{4} &&= 4, \qquad &&k &&= 5,\\
		&j_{1} &&= 5, \qquad &&j_{2} &&= 7, \qquad &&j_{3} &&= 9, \qquad &&j_{4} &&= 11, \qquad &&l &&= 15.
	\end{alignat*}
	The sequence associated to $\mathcal{S}$ is $\lambda^{\mathcal{S}} = (6, 5, 4, 3, 0)$, which is shown in the picture below. If we take $p = (1, 1, 1, 1)$, the $\mathcal{S}$-variety $\Delta_{p}$, represented by the dashed diagram below, is associated to $\lambda^{p} = (7, 7, 6, 5, 4)$.
	\begin{equation*}
		\begin{tikzpicture}[scale = 0.6]
			\draw (0, -1) node[anchor = east] {\small{1}};
			\draw (0, -2) node[anchor = east] {\small{2}};
			\draw (0, -3) node[anchor = east] {\small{3}};
			\draw (0, -4) node[anchor = east] {\small{4}};
			\draw (0, -5) node[anchor = east] {\small{5}};
			\draw (3, 0) node[anchor = south] {\small{3}};
			\draw (4, 0) node[anchor = south] {\small{4}};
			\draw (5, 0) node[anchor = south] {\small{5}};
			\draw (6, 0) node[anchor = south] {\small{6}};
			\draw (7, 0) node[anchor = south] {\small{7}};
			\draw (1.5, -2) node {\small{$\mathcal{S}$}};
			\draw (1.5, -4.5) node {\small{$\Delta_{p}$}};
			\draw (0, -5) rectangle (10, 0);
			\draw (6, 0) -- (6, -1) -- (5, -1) -- (5, -2) -- (4, -2) -- (4, -3) -- (3, -3) -- (3, -4) -- (0, -4);
			\draw[dashed] (7, 0) -- (7, -2) -- (6, -2) -- (6, -3) -- (5, -3) -- (5, -4) -- (4, -4) -- (4, -5);
			\draw[->] (6, -1) -- (6.9, -1.9) node[midway, sloped, above] {\small{$p_{1}$}};
			\draw[->] (5, -2) -- (5.9, -2.9) node[midway, sloped, above] {\small{$p_{2}$}};
			\draw[->] (4, -3) -- (4.9, -3.9) node[midway, sloped, above] {\small{$p_{3}$}};
			\draw[->] (3, -4) -- (3.9, -4.9) node[midway, sloped, above] {\small{$p_{4}$}};
			\draw (10, -2.5) node[anchor = west] {\small{$k$}};
			\draw (5, -5) node[anchor = north] {\small{$l - k$}};
		\end{tikzpicture}
	\end{equation*}
	Now, take $q = (1, 2, 1, 1)$. $j_{2} - i_{2} - q_{2} = j_{1} - i_{1} - q_{1} = 3$ and $i_{2} + q_{2} = i_{3} + q_{3} = 4$, thus the first and third conditions are unnecessary. In other words, $\mathcal{F}_{\bar{q}}: F_{j_{2}} \subset F_{j_{4}}$, $\mathcal{I}_{\bar{q}} = (4, 5)$ and $\bar{q} = (2, 1)$. Below, $\Delta_{q}$ is depicted by the dashed diagram.
	\begin{equation*}
		\begin{tikzpicture}[scale = 0.66]
			\draw (0, -1) node[anchor = east] {\small{1}};
			\draw (0, -2) node[anchor = east] {\small{2}};
			\draw (0, -3) node[anchor = east] {\small{3}};
			\draw (0, -4) node[anchor = east] {\small{4}};
			\draw (0, -5) node[anchor = east] {\small{5}};
			\draw (3, 0) node[anchor = south] {\small{3}};
			\draw (4, 0) node[anchor = south] {\small{4}};
			\draw (5, 0) node[anchor = south] {\small{5}};
			\draw (6, 0) node[anchor = south] {\small{6}};
			\draw (7, 0) node[anchor = south] {\small{7}};
			\draw (1.5, -2) node {\small{$\mathcal{S}$}};
			\draw (1.5, -4.5) node {\small{$\Delta_{q}$}};
			\draw (0, -5) rectangle (10, 0);
			\draw (6, 0) -- (6, -1) -- (5, -1) -- (5, -2) -- (4, -2) -- (4, -3) -- (3, -3) -- (3, -4) -- (0, -4);
			\draw[dashed] (7, 0) -- (7, -4) -- (4, -4) -- (4, -5); 
			\draw[->] (6, -1) -- (6.9, -1.9) node[midway, sloped, above] {\small{$q_{1}$}};
			\draw[->] (5, -2) -- (6.9, -3.9) node[midway, sloped, above] {\small{$q_{2}$}};
			\draw[->] (4, -3) -- (4.9, -3.9) node[midway, sloped, above] {\small{$q_{3}$}};
			\draw[->] (3, -4) -- (3.9, -4.9) node[midway, sloped, above] {\small{$q_{4}$}};
		\end{tikzpicture}
	\end{equation*}
	From the pictures we understand two important facts. First, \textit{the number of corners of an $\mathcal{S}$-variety $\Delta_{q'}$ equals the number of its essential conditions}. Secondly, \textit{when $\Delta_{q'}$ is associated to the flag $\mathcal{F}$, we can interpret the components of the $\omega$-tuple $q'$ as its distance from $\mathcal{S}$. In particular, the terms corresponding to the essential conditions measure the distance between the corners of $\mathcal{S}$ and $\Delta_{q}$}.
\end{example}

\begin{example}\label{Exakj}
	Here we exhibit and comment an example with $k > j_{1}$. Set	$i_{1} = 1$, $i_{2} = 5$, $k = 8$, $j_{1} = 6$, $j_{2} = 11$, $l = 15$ (diagram on the left) and $j_{1}' = 9$, $j_{2}' = 14$ and $l' = 18$ (diagram on the right) and let $\mathcal{S}$ and $\mathcal{S}'$ be the corresponding Schubert varieties.
	\\
	There are no $\mathcal{S}$-varieties whose top corner touches the bottom edge of the $k \times (l - k)$ rectangle: this happens because $i_{1} \leq \min \{ j_{1}, k \} = j_{1}$; that is, the first edge of the rectangle met by the corner is the right one (dashed diagram). Instead, there are some $\mathcal{S}$-varieties whose second corner intersects the bottom edge (dotted diagram).
	\\
	On the contrary, \textit{all corners of $\mathcal{S}'$ can be moved until they reach the bottom edge of the $k \times (l - k)$ rectangle because it is wide enough to let the top corner reach the bottom edge before the right one}. This also implies that some $\mathcal{S}'$-varieties are not $\mathcal{S}$-varieties, thus, when the decomposition theorem is applied to $\mathcal{S}$ and $\mathcal{S}'$, we see that \textit{$\mathcal{S}$ misses some of the supports of $\mathcal{S}'$}.
	\begin{equation*}
		\begin{tikzpicture}[scale = 0.66]
			\draw (0, -8) rectangle (7, 0);
			\draw (2, 0) -- (2, -1) -- (1, -1) -- (1, -5) -- (0, -5);
			\draw[dashed] (0, -6) rectangle (7, 0);
			\draw[->, dashed] (2, -1) -- (6.9, -5.9);
			\draw[dotted] (0, -8) rectangle (4, 0);
			\draw[->, dotted] (1, -5) -- (3.9, -7.9);
			\draw (1, 0) node[anchor = south] {\small{1}};
			\draw (2, 0) node[anchor = south] {\small{2}};
			\draw (4, 0) node[anchor = south] {\small{4}};
			\draw (0, -1) node[anchor = east] {\small{1}};
			\draw (0, -5) node[anchor = east] {\small{5}};
			\draw (0, -6) node[anchor = east] {\small{6}};
			\draw (0.66, -0.66) node {\small{$\mathcal{S}$}};
			\draw (8, -8) rectangle (18, 0);
			\draw (10, 0) -- (10, -1) -- (9, -1) -- (9, -5) -- (8, -5);
			\draw[dashed] (8, -8) rectangle (17, 0);
			\draw[->, dashed] (10, -1) -- (16.9, -7.9);
			\draw[dotted] (8, -8) rectangle (12, 0);
			\draw[->, dotted] (9, -5) -- (11.9, -7.9);
			\draw (12, 0) node[anchor = south] {\small{4}};
			\draw (17, 0) node[anchor = south] {\small{9}};
			\draw (8.66, -0.66) node {\small{$\mathcal{S}'$}};
		\end{tikzpicture}
	\end{equation*}
\end{example}

\begin{example}\label{ExaNotComp}
	Let $\mathcal{S}$ be the Schubert variety given by
	\begin{alignat*}{8}
		&i_{1} &&= 1, \qquad &&i_{2} &&= 3, \qquad &&i_{3} &&= 5, \qquad &&k &&= 7,\\
		&j_{1} &&= 8, \qquad &&j_{2} &&= 12, \qquad &&j_{3} &&= 17, \qquad &&l &&= 20.
	\end{alignat*}
	Consider the $\mathcal{S}$-varieties $\Delta_{p}$ (dashed) and $\Delta_{\bar{q}}$ (dotted), with $\bar{p} = 3$ and $\bar{q} = (3, 1)$, associated to their essential pairs $(\mathcal{F}_{\bar{p}}: F_{j_{2}}, \mathcal{I}_{\bar{p}} = 6)$ and $(\mathcal{F}_{\bar{q}}: F_{j_{1}} \subset F_{j_{3}}, \mathcal{I}_{\bar{q}} = (4, 6))$. Neither of their diagrams contains the other; therefore $\Delta_{p}$ and $\Delta_{q}$ are not comparable.
	\begin{equation*}
		\begin{tikzpicture}[scale=0.66]
			\draw (0, -1) node[anchor = east] {1};
			\draw (0, -3) node[anchor = east] {3};
			\draw (0, -4) node[anchor = east] {4};
			\draw (0, -5) node[anchor = east] {5};
			\draw (0, -6) node[anchor = east] {6};
			\draw (0, -7) node[anchor = east] {7};
			\draw (1, 0) node[anchor = south] {1};
			\draw (2, 0) node[anchor = south] {2};
			\draw (4, 0) node[anchor = south] {4};
			\draw (6, 0) node[anchor = south] {6};
			\draw (7, 0) node[anchor = south] {7};
			\draw (9, 0) node[anchor = south] {9};
			\draw (2, -2) node {$\mathcal{S}$};
			\draw (8, -1) node {$\Delta_{q}$};
			\draw (4.5, -5.5) node {$\Delta_{p}$};
			\draw (0, -7) rectangle (12, 0);
			\draw (6, 0) -- (6, -1) -- (4, -1) -- (4, -3) -- (1, -3) -- (1, -5) -- (0, -5); 
			\draw[dashed] (7, 0) -- (7, -6) -- (0, -6); 
			\draw[dotted] (9, 0) -- (9, -4) -- (2, -4) -- (2, -5) -- (2, -6) -- (0, -6); 
			\draw[->, gray] (6, -1) -- (8.9, -3.9) node[midway, sloped, above] {$\bar{q}_{1}$};
			\draw[->, gray] (4, -3) -- (6.9, -5.9) node[midway, sloped, above] {$\bar{p}$};
			\draw[->, gray] (1, -5) -- (1.9, -5.9) node[midway, sloped, above] {$\bar{q}_{2}$};
		\end{tikzpicture}
	\end{equation*}
	\textit{The essential pair of an $\mathcal{S}$-variety is all we need to draw its Ferrer's diagram}. Anyway, if we wanted to describe $\Delta_{p}$ and $\Delta_{\bar{q}}$ by means of $\mathcal{F}$, then we would have $\mathcal{I}_{p} = (2, 6, 6)$, $\mathcal{I}_{q} = (4, 4, 6)$, $p = (1, 3, 1)$ and $q = (3, 1, 1)$. As you can see, $p_{1} < q_{1}$ and $p_{2} > q_{2}$, which confirms the fact that the studied varieties are not comparable.
\end{example}

\begin{example}\label{ExaComp}
	Let $\mathcal{S}$ be the Schubert variety given in Example~\ref{ExaNotComp} and take $p = (0, 2, 0)$. $\Delta_{p}$ (dashed) is a special Schubert variety and $\Delta_{q}$ (dotted), with $q = (2, 3, 1)$ is an $\mathcal{S}$-variety contained in $\Delta_{p}$ which is not a $\Delta_{p}$-variety (the essential flag of $\Delta_{q}$ is not a subflag of $\mathcal{F}_{\bar{p}}$).
	\begin{equation*}
		\begin{tikzpicture}[scale=0.66]
			\draw (0, -1) node[anchor = east] {1};
			\draw (0, -3) node[anchor = east] {3};
			\draw (0, -5) node[anchor = east] {5};
			\draw (0, -6) node[anchor = east] {6};
			\draw (1, 0) node[anchor = south] {1};
			\draw (4, 0) node[anchor = south] {4};
			\draw (6, 0) node[anchor = south] {6};
			\draw (7, 0) node[anchor = south] {7};
			\draw (8, 0) node[anchor = south] {8};
			\draw (2, -2) node {$\mathcal{S}$};
			\draw (7, -2) node {$\Delta_{q}$};
			\draw (5, -4) node {$\Delta_{p}$};
			\draw (0, -7) rectangle (12, 0);
			\draw (6, 0) -- (6, -1) -- (4, -1) -- (4, -3) -- (1, -3) -- (1, -5) -- (0, -5); 
			\draw[dashed] (0, -5) rectangle (6, 0); 
			\draw[dotted] (8, 0) -- (8, -3) -- (7, -3) -- (7, -6) -- (0, -6); 
		\end{tikzpicture}
	\end{equation*}
\end{example}

\begin{example}\label{ExaAdapt}
	Let $\mathcal{S}$ be the Schubert variety given by
	\begin{alignat*}{8}
		&i_{1} &&= 2, \qquad &&i_{2} &&= 4, \qquad &&i_{3} &&= 6, \qquad &&k &&= 10,\\
		&j_{1} &&= 11, \qquad &&j_{2} &&= 14, \qquad &&j_{3} &&= 17, \qquad &&l &&= 22
	\end{alignat*}
	and put $p = (1, 2, 0)$, $q = (4, 3, 1)$, $q' = (2, 3, 2)$. $\Delta_{p}$, $\Delta_{q}$ and $\Delta_{q'}$ are represented by the dashed, dotted and dashed-dotted diagrams, respectively, and the grey circle highlights the common corner of $\Delta_{q}$ and $\Delta_{q'}$. $\Delta_{p}$ is a special Schubert variety, while $\Delta_{\bar{q}}$ and $\Delta_{\bar{q}'}$ have two conditions, but with respect to different flags. $\Delta_{q^{p}}$ and $\Delta_{q^{\prime p}}$ coincide with the special Schubert variety $\Delta_{(2, 3, 1)}$, whose only corner is represented by the grey circle. Trivially, $\Delta_{\bar{q}}, \Delta_{\bar{q}'} \neq \Delta_{q^{p}}$.
	\begin{equation*}
		\begin{tikzpicture}[scale = 0.66]
			\draw (0, -6) node[anchor = east] {6};
			\draw (0, -7) node[anchor = east] {7};
			\draw (0, -2) node[anchor = east] {2};
			\draw (0, -8) node[anchor = east] {8};
			\draw (0, -4) node[anchor = east] {4};
			\draw (3, 0) node[anchor = south] {3};
			\draw (1, 0) node[anchor = south] {1};
			\draw (2, 0) node[anchor = south] {2};
			\draw (4, 0) node[anchor = south] {4};
			\draw (5, 0) node[anchor = south] {5};
			\draw (7, 0) node[anchor = south] {7};
			\draw (1, -2) node {$\mathcal{S}$};
			\draw (3, -5) node {$\Delta_{p}$};
			\draw (6, -5) node {$\Delta_{q}$};
			\draw (2, -7.5) node {$\Delta_{q'}$};
			\draw (0, -10) rectangle (12, 0); 
			\draw (3, 0) -- (3, -2) -- (2, -2) -- (2, -4) -- (1, -4) -- (1, -6) -- (0, -6); 
			\draw[dashed] (4, 0) -- (4, -6) -- (1, -6); 
			\draw[dotted] (7, 0) -- (7, -6) -- (5, -6) -- (5, -7) -- (0, -7); 
			\draw[loosely dash dot] (5, 0) -- (5, -7) -- (3, -7) -- (3, -8) -- (0, -8); 
			\filldraw[gray] (5, -7) circle [radius = 1mm];
		\end{tikzpicture}
	\end{equation*}
\end{example}

\begin{example}\label{ExaFpqGpq}
	Let $\mathcal{S}$ be the Schubert variety in Example~\ref{ExaNotComp}.
	\begin{alignat*}{8}
		&i_{1} &&= 1, \qquad &&i_{2} &&= 3, \qquad &&i_{3} &&= 5, \qquad &&k &&= 7,\\
		&j_{1} &&= 8, \qquad &&j_{2} &&= 12, \qquad &&j_{3} &&= 17, \qquad &&l &&= 20.
	\end{alignat*}
	Take $\Delta_{p} = \mathcal{S}$ and $\Delta_{q}$ with $q = (1, 3, 2)$. $\codim_{\mathcal{S}} \Delta_{q}$ is easily seen to be given by Formula \eqref{EqLemSmallnessZel}. The grey rectangles in the pictures below represent	the fibre of $\xi_{p}$ (on the top left) at any point of $\Delta_{q}$; the one of $\pi_{p}$ (on the top right); the value $\varepsilon_{\alpha}^{2}$ (on the bottom left); the quantity $\varepsilon_{\alpha} \varepsilon_{\alpha + 1}$ (on the bottom right). Remember that the sum of $\varepsilon_{1}^{2}$, $\varepsilon_{2}^{2}$, $\varepsilon_{3}^{2}$, $\varepsilon_{1} \varepsilon_{2}$ and $\varepsilon_{2} \varepsilon_{3}$ is the definite positive form $B(\varepsilon)$ (see Lemma~\ref{LemSmallnessZel}).
	\begin{equation*}
		\begin{tikzpicture}[scale=0.48]
			\draw (0, -1) node[anchor = east] {\tiny{1}};
			\draw (0, -3) node[anchor = east] {\tiny{3}};
			\draw (0, -5) node[anchor = east] {\tiny{5}};
			\draw (0, -6) node[anchor = east] {\tiny{6}};
			\draw (0, -7) node[anchor = east] {\tiny{7}};
			\draw (1, 0) node[anchor = south] {\tiny{1}};
			\draw (3, 0) node[anchor = south] {\tiny{3}};
			\draw (4, 0) node[anchor = south] {\tiny{4}};
			\draw (6, 0) node[anchor = south] {\tiny{6}};
			\draw (7, 0) node[anchor = south] {\tiny{7}};
			\draw (2, -2) node {\tiny{$\mathcal{S}$}};
			\draw (6, -5) node {\tiny{$\Delta_{q}$}};
			\draw (9.5, -3.5) node {\tiny{$\dim G_{pq}$}};
			\filldraw[fill = lightgray, draw = gray] (0, -7) rectangle (1, -5);
			\filldraw[fill = lightgray, draw = gray] (1, -6) rectangle (4, -3);
			\filldraw[fill = lightgray, draw = gray] (4, -2) rectangle (6, -1);
			\draw (0, -7) rectangle (12, 0);
			\draw (6, 0) -- (6, -1) -- (4, -1) -- (4, -3) -- (1, -3) -- (1, -5) -- (0, -5); 
			\draw (7, 0) -- (7, -6) -- (3, -6) -- (3, -7); 
			\draw (14, 0) node[anchor = south] {\tiny{1}};
			\draw (16, 0) node[anchor = south] {\tiny{3}};
			\draw (17, 0) node[anchor = south] {\tiny{4}};
			\draw (19, 0) node[anchor = south] {\tiny{6}};
			\draw (20, 0) node[anchor = south] {\tiny{7}};
			\draw (15, -2) node {\tiny{$\mathcal{S}$}};
			\draw (19, -5) node {\tiny{$\Delta_{q}$}};
			\draw (22.5, -3.5) node {\tiny{$\dim F_{pq}$}};
			\filldraw[fill = lightgray, draw = gray] (19, -1) rectangle (20, 0);
			\filldraw[fill = lightgray, draw = gray] (17, -3) rectangle (20, -1);
			\filldraw[fill = lightgray, draw = gray] (14, -5) rectangle (16, -3);
			\draw (13, -7) rectangle (25, 0);
			\draw (19, 0) -- (19, -1) -- (17, -1) -- (17, -3) -- (14, -3) -- (14, -5) -- (13, -5); 
			\draw (20, 0) -- (20, -6) -- (16, -6) -- (16, -7); 
			\draw (0, -9) node[anchor = east] {\tiny{1}};
			\draw (0, -11) node[anchor = east] {\tiny{3}};
			\draw (0, -13) node[anchor = east] {\tiny{5}};
			\draw (0, -14) node[anchor = east] {\tiny{6}};
			\draw (0, -15) node[anchor = east] {\tiny{7}};
			\draw (2, -10) node {\tiny{$\mathcal{S}$}};
			\draw (9.5, -11.5) node {\tiny{$\varepsilon_{1}^{2}$, $\varepsilon_{2}^{2}$, $\varepsilon_{3}^{2}$}};
			\filldraw[fill = lightgray, draw = gray] (6, -10) rectangle (7, -9);
			\filldraw[fill = lightgray, draw = gray] (4, -14) rectangle (7, -11);
			\filldraw[fill = lightgray, draw = gray] (1, -15) rectangle (3, -13);
			\draw (6, -13) node {\tiny{$\Delta_{q}$}};
			\draw (0, -15) rectangle (12, -8);
			\draw (6, -8) -- (6, -9) -- (4, -9) -- (4, -11) -- (1, -11) -- (1, -13) -- (0, -13); 
			\draw (7, -8) -- (7, -14) -- (3, -14) -- (3, -15); 
			\draw (15, -10) node {\tiny{$\mathcal{S}$}};
			\draw (19, -13) node {\tiny{$\Delta_{q}$}};
			\draw (22.5, -11.5) node {\tiny{$\varepsilon_{1} \varepsilon_{2}$, $\varepsilon_{2} \varepsilon_{3}$}};
			\filldraw[fill = lightgray, draw = gray] (14, -14) rectangle (16, -11);
			\filldraw[fill = lightgray, draw = gray] (17, -10) rectangle (20, -9);
			\draw (13, -15) rectangle (25, -8);
			\draw (19, -8) -- (19, -9) -- (17, -9) -- (17, -11) -- (14, -11) -- (14, -13) -- (13, -13); 
			\draw (20, -8) -- (20, -14) -- (16, -14) -- (16, -15); 
		\end{tikzpicture}
	\end{equation*}
\end{example}

\bibliographystyle{amsplain}
\bibliography{CioffiFrancoSessa}

\end{document}